\documentclass[reqno]{amsart}
\usepackage{caption}
\usepackage{subcaption}

\usepackage{mathabx,cite}
\usepackage[utf8]{inputenc} 
\usepackage[english]{babel}
\usepackage{amstext}
\usepackage{euscript}
\usepackage{bbm}
\usepackage{mathrsfs}
\usepackage{enumerate}
\usepackage{graphicx}
\usepackage{amscd}
\usepackage{verbatim}
\usepackage{amssymb}
\usepackage{amsthm}
\usepackage{amsmath}
\usepackage{amstext}
\usepackage{amsfonts}
\usepackage{latexsym}
\usepackage{mathtools} 
\usepackage{enumitem} 
\usepackage{accents} 
\usepackage[foot]{amsaddr} %package that puts the authors addresses on the title page as a footnote (when using AMSART document class only).
\usepackage{float}

 \usepackage[left=1.2in, right=1.2in, top=1.5in, bottom=1.5in]{geometry}
\usepackage{scalerel,stackengine}

\usepackage[usenames,dvipsnames]{xcolor} 
\definecolor{dkblue}{RGB}{1,31,91} % This is a dark Blue     
\usepackage[colorlinks=true, pdfstartview=FitV, linkcolor=dkblue, citecolor=dkblue, urlcolor=dkblue]{hyperref}

\newcommand{\RR}{\mathbb R}

\newcommand{\ZZ}{\mathbb Z}

\newcommand{\TT}{\mathbb T}

\newcommand{\pa}{\partial}

\newcommand{\Id}{\mathrm{Id}}

\normalsize
\normalsize
\newcommand{\norm}[1]{\left\lVert#1\right\rVert}    
\newcommand\abs[1]{\left|#1\right|}    

\newcommand\restr[2]{{% we make the whole thing an ordinary symbol
  \left.\kern-\nulldelimiterspace % automatically resize the bar with \right
  #1 % the function
  \vphantom{\big|} % pretend it's a little taller at normal size
  \right|_{#2} % this is the delimiter
  }}
  
  \stackMath
\newcommand\reallywidehat[1]{%
\savestack{\tmpbox}{\stretchto{%
  \scaleto{%
    \scalerel*[\widthof{\ensuremath{#1}}]{\kern-.6pt\bigwedge\kern-.6pt}%
    {\rule[-\textheight/2]{1ex}{\textheight}}%WIDTH-LIMITED BIG WEDGE
  }{\textheight}% 
}{0.5ex}}%
\stackon[1pt]{#1}{\tmpbox}%
}

\theoremstyle{definition}
\newtheorem{theorem}{Theorem}

\newtheorem{lemma}[theorem]{Lemma}

\newtheorem{remark}[theorem]{Remark}

\numberwithin{equation}{section}
\numberwithin{theorem}{section}
\numberwithin{definition}{section}

\begin{document}

\keywords{Asymptotic model, well-posedness, periodic traveling waves, blood flow modeling}
\subjclass[2010]{}

\title[Asymptotic models for viscoelastic one-dimensional blood flow]{Asymptotic models for viscoelastic one-dimensional blood flow}

\author[D. Alonso-Or\'an]{Diego Alonso-Or\'an}
\address{Departamento de An\'{a}lisis Matem\'{a}tico and Instituto de Matem\'aticas y Aplicaciones (IMAULL), Universidad de La Laguna, C/Astrof\'{i}sico Francisco S\'{a}nchez s/n, 38271, La Laguna, Spain. \href{mailto:dalonsoo@ull.edu.es}dalonsoo@ull.edu.es}

\author[R. Granero-Belinch\'on]{Rafael Granero-Belinch\'on}
\address{Departamento  de  Matem\'aticas,  Estad\'istica  y  Computaci\'on,  Universidad  de Cantabria.  Avda.  Los  Castros  s/n,  Santander,  Spain. \href{mailto:rafael.granero@unican.es}rafael.granero@unican.es}

\author[Carlos Yanes P\'erez]{Carlos Yanes P\'erez}
\address{Universidad de La Laguna, C/Astrof\'{i}sico Francisco S\'{a}nchez s/n, 38271, La Laguna, Spain. \href{mailto:alu0101430720@ull.edu.es}alu0101430720@ull.edu.es}

\begin{abstract}
We derive a unidirectional asymptotic model for one-dimensional blood flow in viscoelastic arteries. We prove local well-posedness of strong solutions in Sobolev spaces for general parameters and mean-zero periodic data. In the purely elastic BBM regime we further establish global existence and exponential decay for sufficiently small initial data. We also present a numerical study of the reduced model, including comparisons across different viscoelastic and amplitude regimes, and discuss the observed dynamics in connection with the continuation criterion.
\end{abstract}

\maketitle
\tableofcontents

\section{Introduction}
The modeling of the human arterial system dates back to the works of Euler, who formulated the partial differential equations that describe the conservation of mass and momentum in inviscid flow. Early mathematical descriptions of arterial blood flow date back to Euler, while Young first identified its wave-like nature and derived an expression for the propagation velocity by analogy with wave motion in elastic tubes \cite{father_hemo, hemomath}. \medskip

The one-dimensional modeling of blood flow has been established as a valuable technique for studying circulatory dynamics in arteries and veins. This approach simplifies the vascular system into a single dimension, allowing for the analysis of blood flow along arterial or venous segments with adequate accuracy and reasonable computational cost. Essentially, these models describe the relationship between pressure, flow, and the cross-sectional area of blood vessels over time and distance. More precisely, the cross-sectional area of the vessel $A(x,t)$, the flow rate $Q(x,t)$ and the average internal pressure $p(x,t)$ over the cross section satisfy the conservation of mass and momentum balance equations. For mathematical convenience, and in order to avoid boundary effects at the level of the reduced asymptotic model, we work throughout on the periodic domain $\mathbb{T}=(\mathbb{R}/2\pi\mathbb{Z})$. Thus,
\begin{subequations}\label{sistema:1}
    \begin{align}
        A_t + Q_x &= 0, \quad x\in \mathbb{T}, t>0,\\
        Q_t + \alpha\left(\frac{Q^2}{A}\right)_x &= -\frac{A}{\rho}p_x + \frac{f}{\rho},  \quad x\in \mathbb{T}, t>0.
    \end{align}
\end{subequations}

Here $\rho$ is the fluid density assumed to be constant, $\alpha$ denotes a Coriolis coefficient and $f$ is a friction term, cf. \cite{one_dimen}.  To close the system, we must specify a constitutive law between the internal pressure $p$ and the cross-sectional area of the vessel $A$. In this work, we will consider the so called Kelvin-Voigt relation where the pressure is given by
\begin{equation}\label{Relacion_p_visco}
    p(A,x)= p_{\text{ext}} + \frac{\beta}{A_0(x)}\left(\sqrt{A(x,t)} - \sqrt{A_0(x)}\right) + \frac{\nu}{A_0(x)}(\sqrt{A(x,t)})_t,
\end{equation}
where
\begin{equation}\label{Beta}
    \beta(x) = \frac{\sqrt{\pi} h_0(x) E(x)}{1-\sigma^2}.
\end{equation}
Here, \( p_{\text{ext}} \) represents the external constant pressure, \( h_0(x)\) is the arterial wall thickness, and \( A_0(x) \) is the cross-sectional area in the equilibrium state \((p, u) = (p_{\text{ext}}, 0)\). The constant \( \nu \) is a viscoelastic coefficient that depends on the artery's thickness, \( E(x) \) denotes Young’s modulus, and \( \sigma \) is Poisson’s ratio. Equation \eqref{Relacion_p_visco} incorporates a viscoelastic correction to the pressure, with the coefficient \( \beta \) defined in \eqref{Beta}, which accounts for the mechanical properties of the arterial wall.  In order to make the model more tractable is customary to choose $A_{0}$ and $\beta$ as positive constants independent of the $x$-variable.\medskip
\\

An alternative formulation of the system of governing equations \eqref{sistema:1} can be obtained for the triple $(A,u,p)$ where $u(x,t)$ denotes the blood velocity. Indeed, writing $Q=Au$ and taking $\alpha=1$ we find the alternative system 
\begin{subequations}\label{sistema:2}
    \begin{align}
        A_t + (Au)_x&= 0,\label{eq1:sistema2}\\
        u_t + uu_x &= -\frac{p_x }{\rho}+ \frac{f}{\rho A}.\label{eq2:sistema2}
    \end{align}
\end{subequations}
together with the same constitutive law
\begin{equation}\label{Relacion_p_visco:2}
    p= p_{\text{ext}} + \frac{\beta}{A_0}\left(\sqrt{A} - \sqrt{A_0}\right) + \frac{\nu}{A_0}(\sqrt{A})_t,
\end{equation}
Following the work \cite{benchmark}, the friction term $f=f(u)$ is defined as a Poiseuille parabolic velocity profile $f(u)=\kappa u$ where $\kappa\geq 0$ is related to the blood viscosity.

\subsection*{Previous works and results}
System \eqref{sistema:1}  together with the relation \eqref{Relacion_p_visco} was introduced in \cite{Alastruey2011} and its well-posedness has been studied later in \cite{Canic2006, viscoelastico }. In particular, in \cite{viscoelastico} the authors studied the existence and uniqueness of maximal solutions with suitable nonlinear Robin boundary conditions. Additionally, it has been utilized in \cite{Lal2017,Caiazzo2017 } for hemodynamic parameter estimation and in \cite{Montecinos2014,Wang2015,Wang2016} for the development and analysis of numerical schemes, particularly those accounting for the viscoelastic correction term. For more general constitutive laws including for instance second time derivatives we refer the interested reader to \cite{Armentano,Canic2006}. \medskip

When the viscoelastic coefficient \( \nu \) is set to zero, system \eqref{sistema:1}-\eqref{Relacion_p_visco} can be written as an hyperbolic quasilinear system and several well-posedness results via classical hyperbolic techniques are available in the literature \cite{Canic2003, Montecinos2014}. However, it seems that from the numerical point of view \cite{Muller2016} the viscoelastic term plays a significant role when comparing numerical models with in vivo data \cite{Lal2017}. It has been shown that incorporating viscoelastic wall models yields more physiologically accurate predictions than purely elastic models. Indeed, one-dimensional elastic models tend to overestimate both blood pressure and vessel deformation, as highlighted in \cite{benchmark,one_dimen, gravity}. \medskip

\subsection{Contributions and main results}\label{subsec:contrib}

The purpose of this paper is twofold. First, we derive a unidirectional asymptotic equation associated with the
one-dimensional blood flow model \eqref{sistema:1}--\eqref{Relacion_p_visco}. The derivation relies on a multi-scale
expansion in the small-amplitude/long-wave parameter $0<\varepsilon\ll1$ (cf. \cite{AODGB24,AGBSW19,GBS21}),
which reduces the full system to a hierarchy of linear problems that can be closed at a prescribed order of accuracy.
More precisely, we write
\[
A=\overline A_0+\varepsilon h = 1+\varepsilon h,
\qquad
u=\overline u_0+\varepsilon U=\varepsilon U,
\]
and introduce the formal expansions
\[
h(x,t)=\sum_{\ell=0}^{\infty}\varepsilon^\ell h^{(\ell)}(x,t),
\qquad
U(x,t)=\sum_{\ell=0}^{\infty}\varepsilon^\ell U^{(\ell)}(x,t).
\]
Truncating the expansion at order $\mathcal O(\varepsilon^2)$ yields the following unidirectional model for
\eqref{sistema:1}--\eqref{Relacion_p_visco}:
\begin{align}\label{asymptotic:model:contr}
	f_{t}&=\mathcal{M}\mathcal{P}\bigg[ - \frac{1}{\varepsilon}\Big(1-\frac{\beta}{2}\Big)f_{xx}
	+ \frac{1}{\varepsilon}\kappa f_{x}
	- \frac{1}{\varepsilon}\frac{\nu}{2}f_{xxx}
	+ \Big(2+\frac{\beta}{4}\Big)(f f_{x})_{x}    \nonumber \\
	&\hspace{4.7cm}+ \frac{1}{\varepsilon}\frac{\nu}{4}f_{x}f_{xx}
	-\frac{\nu}{4}ff_{xxx}
	-2\kappa ff_{x}\bigg],
\end{align}
for the asymptotic unknown $f(x,t):=h^{(0)}(x,t)+\varepsilon h^{(1)}(x,t)$.
The nonlocal operators in \eqref{asymptotic:model:contr} are Fourier multipliers defined by
\begin{equation}\label{opP}
	\mathcal{P}:=\Big(\kappa-\frac{\nu}{2}\partial_{xx}\Big)^{-1},
	\qquad
	\mathcal{M}:=\left(\Id-4\Big(\frac{\partial_x}{\kappa-\frac{\nu}{2}\partial_{xx}}\Big)^{2}\right)^{-1}
	\left(\Id+\frac{2\partial_x}{\kappa-\frac{\nu}{2}\partial_{xx}}\right),
\end{equation}
with symbols
\begin{equation}\label{multiplierP-multiplierM}
	\widehat{\mathcal{P}f}(k)=\frac{1}{\kappa+\frac{\nu}{2}|k|^{2}}\widehat{f}(k),
	\quad
	\widehat{\mathcal{M}f}(k)= \mathsf{m}(k)\widehat{f}(k),
	\quad
	\mathsf{m}(k)=\left(1 + 4 \frac{|k|^2}{\left(\kappa + \frac{\nu}{2} |k|^2\right)^2} \right)^{-1}
	\left(1 +  \frac{2 i k}{\kappa + \frac{\nu}{2} |k|^2}\right).
\end{equation}
The second goal of the paper is to study analytical properties of the reduced model \eqref{asymptotic:model:contr}.
Our main results can be summarized as follows:
\begin{itemize}
	\item \emph{Local well-posedness}. For $s>\frac52$ and mean-zero data $f_0\in H^s(\TT)$, we prove existence and uniqueness
	of a strong solution $f\in C([0,T];H^s(\TT))$ on a time interval $[0,T]$ depending only on $\|f_0\|_{H^s}$ and the physical
	parameters; see Theorem~\ref{th:existence:strong} in Section~\ref{sec:wp}.
	\item \emph{Global small-data theory in the BBM regime}. In the purely elastic case $\nu=0$ (for which
	\eqref{asymptotic:model:contr} can be written in a BBM-type local form), and assuming $\beta>-2$, we establish global
	existence and decay in $H^2(\TT)$ for sufficiently small mean-zero initial data; see
	Theorem~\ref{th:bbm-global} in Section~\ref{sec:bbm-global}.
\item \emph{Numerical simulations}. Finally, we present a numerical study of the asymptotic model, including experiments in the viscoelastic regime ($\nu>0$) for different amplitudes and parameter values, as well as simulations in the purely elastic case ($\nu=0$). These computations illustrate the qualitative behavior of the solutions across several regimes and are interpreted in light of the continuation criterion; see Section~\ref{sec:numerical}.
\end{itemize}
\medskip
From a modeling viewpoint, equation \eqref{asymptotic:model:contr} describes the slow modulation (on the long time scale
$\tau=\varepsilon t$) of small-amplitude, long-wave disturbances propagating predominantly in one direction along the vessel,
i.e.\ a single traveling-wave branch selected by the far-field change of variables $\xi=x-t$. In this reduced regime,
$\beta$ encodes the effective wall elasticity (and thus the characteristic wave speed), while $\kappa$ accounts for viscous
damping due to frictional losses (e.g.\ Poiseuille-type resistance) and $\nu$ introduces a viscoelastic correction that
regularizes the dynamics through higher-order dispersive/dissipative effects. Consequently, the competition between
elastic propagation, damping, and viscoelastic regularization is captured at the level of \eqref{asymptotic:model:contr},
providing a tractable one-dimensional model for wave dynamics in compliant arteries.

\subsection{Notation and preliminaries}
Let us next introduce the notation that will be used throughout the rest of the paper. 
	For  $1\leq p<\infty$ we denote by  $L^p(\TT;\RR)$ the standard Lebesgue space of measurable $p$-integrable $\RR$-valued functions with domain $\TT=(\RR/2\pi\mathbb Z)$ and by  $L^\infty(\TT;\RR)$ the space of essentially bounded functions. Particularly, $L^2(\TT;\RR)$ is equipped with the inner product 
	$
	\langle f,g\rangle_{L^2}=\int_{\TT}f\cdot\overline{g}\, dx,
	$
	where $\overline{g}$ denotes the complex conjugate of $g$.  \medskip
	
	The Fourier
	transform and inverse Fourier transform of $f(x)\in L^2(\TT;\RR)$ are defined by
	$\widehat{f}(k)=\int_{\TT}f(x){\rm e}^{-{\rm i}x k}\, dx$ and 
	$f(x)=\frac{1}{2\pi}\sum_{k\in\mathbb Z}\widehat{f}(k){\rm e}^{{\rm i}x k}$, respectively. Recalling that for any $s\in\RR$,  
	$\widehat{D^{s} f}(k)=(1+|k|^2)^{s/2}\widehat{f}(k)$, we define the Sobolev space $H^s$ on $\TT$ with values in $\RR$ as
	\begin{align*}
	H^s(\TT;\RR):=\left\{f\in L^2(\TT;\RR):\|f\|_{H^s(\TT;\RR)}^2
	=\sum_{k\in\mathbb Z}|\widehat{D^sf}(k)|^2<+\infty\right\}.
	\end{align*}
	Recall that this norm is equivalent to 
	\[ \norm{f}^{2}_{H^{s}(\TT)}= \norm{f}_{L^{2}(\TT)}^{2}+\norm{f}_{\dot{H}^{s}(\TT)}^{2},\]	
	where $\norm{f}_{\dot{H}^{s}(\TT)}^{2}=\norm{\Lambda^{s}f}_{L^{2}(\TT)}^{2}$ and $\Lambda^{s}$ is defined as the homogeneous multiplier of $D^{s}$, namely, $\widehat{\Lambda^{s} f}(k)=|k|^{s}\widehat{f}(k)$.  Throughout the paper $C = C(\cdot)$ will denote a positive constant that may depend on fixed parameters  and  $x \lesssim y$ ($x \gtrsim y$) means that $x\le C y$ ($x\ge C y$) holds for some $C$.

\subsubsection*{Calculus estimates, operators and symbols}
	Next, let us recall the product estimate in Sobolev spaces, the so called Kato-Ponce commutator  \cite{Kato-Ponce-1988-CPAM, Kenig-Ponce-Vega-1991-JAMS}
	\begin{equation}\label{KPcom}
	\|\left[\Lambda^s,f\right]g\|_{L^p(\TT)}\leq C_{s,p}(\|\pa_{x} f\|_{L^{p_1}(\TT)}\|\Lambda^{s-1}g\|_{L^{p_2}(\TT)}+\|\Lambda^sf\|_{L^{p_3}(\TT)}\|g\|_{L^{p_4}(\TT)}),
	\end{equation}
	with $p,p_i\in(1,\infty)$ with $i=1,\ldots,4$ and $\frac{1}{p}=\frac{1}{p_1}+\frac{1}{p_2}=\frac{1}{p_3}+\frac{1}{p_4}$. We will also make use of the Sobolev embedding and algebra property
		\begin{align}
		\norm{f}_{L^{\infty}(\TT)}&\leq C_{s} \norm{f}_{\dot{H}^{s}(\TT)}, \label{Sob:e} \\
		\norm{fg}_{\dot{H}^{s}(\TT)}&\leq C_{s} \norm{f}_{\dot{H}^{s}(\TT)}\norm{g}_{\dot{H}^{s}(\TT)}, \label{Alg:pro}
		\end{align}
	for $f,g $ a zero mean functions and $s>1/2$. \medskip
	
In the following lemma we provide some identities and estimates for the operators \eqref{opP}-\eqref{multiplierP-multiplierM}:

\begin{lemma}\label{lemma:op} 
Assume $\nu>0$ and $\kappa>0$. Let $\mathcal{P}$ and $\mathcal{M}$ be the differential operators given in \eqref{opP} and \eqref{multiplierP-multiplierM}. Then,
\begin{enumerate}
\item $\mathcal{P}$ is a smoothing operator of degree $-2$ such that for $s\in\mathbb{R}$ and $f\in H^{s}(\mathbb{T})$ 
\begin{equation}\label{estimate:P:smooth}
\norm{\mathcal{P}f}_{H^{s+2}(\mathbb{T})}\lesssim  \norm{f}_{H^{s}(\mathbb{T})}.
\end{equation}
Furthermore, we have the identity
\begin{equation}\label{twoder:helm}
\partial_{xx}\mathcal{P}=-\frac{2}{\nu}\mathsf{Id}+\frac{2\kappa }{\nu}\mathcal{P}.
\end{equation}

\item $\mathcal{M}f=\left(\mathsf{Id}+\mathcal{S}\right)f$,
where $\mathcal{S}$ is a smoothing operator of degree $-1$. In particular, for $s\in\mathbb{R}$ and $f\in H^{s}(\mathbb{T})$ we have that
\begin{equation}\label{bound:smooth}
\norm{\mathcal{S}f}_{H^{s+1}(\mathbb{T})}\lesssim  \norm{f}_{H^{s}(\mathbb{T})}.
\end{equation}

\end{enumerate} 
\end{lemma}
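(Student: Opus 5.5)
Both operators are Fourier multipliers, so I will argue entirely on the Fourier side: write $\widehat{\mathcal{T}f}(k)=\mathsf{t}(k)\widehat f(k)$, note that $\|\mathcal{T}f\|_{H^{s+r}}\lesssim\|f\|_{H^s}$ is equivalent to $\sup_{k\in\ZZ}(1+|k|^2)^{r/2}|\mathsf{t}(k)|<\infty$, and bound the relevant symbols uniformly in $k$.

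\textbf{Part (1).} The symbol of $\mathcal{P}$ is $\mathsf{p}(k)=(\kappa+\frac{\nu}{2}|k|^2)^{-1}$. Since $\kappa,\nu>0$, we have $\kappa+\frac{\nu}{2}|k|^2\geq \min(\kappa,\frac{\nu}{2})(1+|k|^2)$. Hence
\[
(1+|k|^2)\,\mathsf{p}(k)\le \max\Big(\frac1\kappa,\frac2\nu\Big),
\]
which gives \eqref{estimate:P:smooth}. For \eqref{twoder:helm}, compute the symbol of $\partial_{xx}\mathcal{P}$:
\[
-\frac{k^2}{\kappa+\frac{\nu}{2}k^2}=-\frac{2}{\nu}\cdot\frac{\frac{\nu}{2}k^2+\kappa-\kappa}{\kappa+\frac{\nu}{2}k^2}=-\frac2\nu+\frac{2\kappa}{\nu}\,\mathsf{p}(k).
\]
Equivalently, apply $\mathcal{P}$ to the identity $(\kappa-\frac\nu2\partial_{xx})\mathcal{P}=\Id$.

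\textbf{Part (2).} Set $a(k)=\frac{2k}{\kappa+\frac\nu2 k^2}$, which is real. Then
\[
\mathsf{m}(k)=\frac{1+ia(k)}{1+a(k)^2}=\frac{1}{1-ia(k)}.
\]
Define $\mathcal{S}$ as the multiplier with symbol $\mathsf{s}(k)=\mathsf{m}(k)-1=\frac{ia(k)}{1-ia(k)}$, so that $\mathcal{M}=\Id+\mathcal{S}$ by construction. Because $|1-ia|\ge1$, we get $|\mathsf{s}(k)|\le|a(k)|$. Moreover
\[
|a(k)|\le \frac{2|k|}{\min(\kappa,\frac\nu2)(1+k^2)}\lesssim (1+|k|^2)^{-1/2}.
\]
Thus $\sup_k (1+|k|^2)^{1/2}|\mathsf{s}(k)|<\infty$, which is \eqref{bound:smooth}. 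In fact $\mathcal{S}$ is even of degree $-1$ with the better constant coming from $|\mathsf s|\le |a|$.

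\textbf{Main obstacle.} The only step that is not immediate is the algebraic simplification $\mathsf{m}=(1-ia)^{-1}$. It avoids having to track the factor $(1+4k^2/(\kappa+\frac\nu2k^2)^2)^{-1}$ separately, and it makes $|1-ia|\ge1$ available. Both the lower bound on the denominator and the bound on $a(k)$ genuinely use $\kappa>0$ and $\nu>0$; for $\kappa=0$ the symbols are singular at $k=0$, which would force a restriction to mean-zero functions.
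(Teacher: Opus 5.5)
Your proof is correct and follows essentially the same route as the paper: both arguments work purely with the Fourier symbols, bound $(1+k^2)$ times the symbol of $\mathcal{P}$ uniformly in $k$, and verify the Helmholtz identity by the same symbol computation. The only difference is a tidier simplification in part (2). Your observation that $\mathsf{m}(k)=(1-ia(k))^{-1}$, i.e.\ $\mathcal{M}=(\Id-2\partial_x\mathcal{P})^{-1}$, gives $|\mathsf{s}(k)|\le|a(k)|$ at once, whereas the paper expands $\mathsf{m}-1$ and computes $|\mathsf{s}(k)|^2=4k^2/((\kappa+\frac{\nu}{2}k^2)^2+4k^2)$, which is the same quantity $a(k)^2/(1+a(k)^2)$.
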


\begin{proof}[Proof of Lemma \ref{lemma:op}]
Throughout the proof we use the Fourier convention on $\TT$ given by
$\widehat{\partial_x f}(k)=ik\widehat f(k)$ and $\widehat{\partial_{xx} f}(k)=-k^2\widehat f(k)$.
We also set
\[
a(k):=\kappa+\frac{\nu}{2}|k|^2>0,\qquad k\in\ZZ .
\]

\medskip\noindent
\underline{\textit{Step 1: proof of \eqref{estimate:P:smooth} and \eqref{twoder:helm}.}}
Using \eqref{multiplierP-multiplierM} we find that
\begin{align*}
\norm{\mathcal{P}f}^{2}_{H^{s+2}(\mathbb{T})}
=\sum_{k\in\mathbb{Z}} \left(1+|k|^{2}\right)^{s+2}\frac{1}{a(k)^{2}}|\widehat{f}(k)|^{2}&=\sum_{k\in\mathbb{Z}} \left(1+|k|^{2}\right)^{s}\,
\frac{\left(1+|k|^{2}\right)^{2}}{a(k)^{2}}\,|\widehat{f}(k)|^{2}\\
&\le \Big(\sup_{k\in\mathbb{Z}}\frac{\left(1+|k|^{2}\right)^{2}}{a(k)^{2}}\Big)
\sum_{k\in\mathbb{Z}}\left(1+|k|^{2}\right)^{s} |\widehat{f}(k)|^{2}.
\end{align*}
It remains to show that the supremum is finite. Since $a(k)\ge \kappa$ for all $k$ and
$a(k)\ge \frac{\nu}{2}|k|^{2}$ for $|k|\ge 1$, we get
\[
\sup_{|k|\le 1}\frac{(1+k^{2})^{2}}{a(k)^2}\le \frac{(1+1)^2}{\kappa^2}=\frac{4}{\kappa^2},
\qquad
\sup_{|k|\ge 1}\frac{(1+k^{2})^{2}}{a(k)^2}
\le \sup_{|k|\ge 1}\frac{(2k^{2})^{2}}{\left(\frac{\nu}{2}k^{2}\right)^2}
=\frac{16}{\nu^{2}}.
\]
Therefore $\sup_{k\in\ZZ}\frac{(1+k^{2})^{2}}{a(k)^2}\lesssim 1$ (with an implicit constant depending only on
$\kappa,\nu$), and bound \eqref{estimate:P:smooth} follows. For \eqref{twoder:helm}, we compute the symbol of $\partial_{xx}\mathcal{P}$:
\[
\widehat{\partial_{xx}\mathcal{P} f}(k)=\frac{-k^{2}}{a(k)}\widehat{f}(k)=\frac{2}{\nu}\Big(\frac{\kappa}{a(k)}-1\Big)\widehat{f}(k)
=\left(-\frac{2}{\nu}+\frac{2\kappa}{\nu}\frac{1}{a(k)}\right)\widehat{f}(k).
\]
Taking inverse Fourier transform yields the operator identity
\[
\partial_{xx}\mathcal{P}
=-\frac{2}{\nu}\mathsf{Id}+\frac{2\kappa}{\nu}\mathcal{P}.
\]
\medskip\noindent
\underline{\textit{Step 2: decomposition $\mathcal{M}=\Id+\mathcal{S}$ and proof of \eqref{bound:smooth}.}}
From \eqref{multiplierP-multiplierM} we have
\[
\mathsf{m}(k)=\left(1 + 4 \frac{|k|^2}{a(k)^2} \right)^{-1}
\left(1 +  \frac{2 i k}{a(k)}\right)
=\frac{a(k)^2}{a(k)^2+4k^2}\cdot \frac{a(k)+2ik}{a(k)}
=\frac{a(k)\big(a(k)+2ik\big)}{a(k)^2+4k^2}.
\]
Define $\mathsf{s}(k)$ by
\begin{equation}\label{derivada:algo}
\mathsf{m}(k)=1-\mathsf{s}(k),
\qquad\text{that is}\qquad
\mathsf{s}(k)=1-\frac{a(k)\big(a(k)+2ik\big)}{a(k)^2+4k^2}.
\end{equation}
A direct computation gives
\[
\mathsf{s}(k)=\frac{a(k)^2+4k^2-a(k)\big(a(k)+2ik\big)}{a(k)^2+4k^2}
=\frac{4k^2-2ik\,a(k)}{a(k)^2+4k^2}
=-\,\frac{2ik\big(a(k)-2ik\big)}{a(k)^2+4k^2}.
\]
In particular,
\[
|\mathsf{s}(k)|^{2}
=\frac{4k^{2}\,|a(k)-2ik|^{2}}{(a(k)^{2}+4k^{2})^{2}}
=\frac{4k^{2}\,(a(k)^{2}+4k^{2})}{(a(k)^{2}+4k^{2})^{2}}
=\frac{4k^{2}}{a(k)^{2}+4k^{2}}.
\]
This shows that $\mathsf{s}(k)=\mathcal{O}(1/|k|)$ as $|k|\to\infty$, hence $\mathcal{S}$ is a smoothing
operator of degree $-1$. Moreover, it provides the uniform bound needed below:
\[
(1+k^{2})|\mathsf{s}(k)|^{2}=\frac{4(1+k^{2})k^{2}}{a(k)^{2}+4k^{2}}
\le \max\Big\{\frac{8}{\kappa^{2}},\frac{32}{\nu^{2}}\Big\}
\qquad\text{for all }k\in\ZZ,
\]
where we used that $a(k)\ge \kappa$ for $|k|\le 1$ and $a(k)\ge \frac{\nu}{2}k^{2}$ for $|k|\ge 1$. \medskip

Now let $\mathcal{S}$ be the Fourier multiplier operator with symbol $\mathsf{s}(k)$, i.e.
\[
\widehat{\mathcal{S}f}(k)=-\mathsf{s}(k)\widehat f(k).
\]
Then \eqref{derivada:algo} implies $\widehat{\mathcal{M}f}(k)=\big(1+\mathsf{s}(k)\big)\widehat f(k)$, i.e. $\mathcal{M}=\Id+\mathcal{S}.$ Finally, to prove \eqref{bound:smooth}, we compute
\begin{align*}
\norm{\mathcal{S}f}^{2}_{H^{s+1}(\mathbb{T})}
&=\sum_{k\in\mathbb{Z}}  \left(1+|k|^{2}\right)^{s+1} |\mathsf{s}(k)|^{2}|\widehat{f}(k)|^{2}\\
&\le \Big(\sup_{k\in\mathbb{Z}} (1+|k|^{2}) |\mathsf{s}(k)|^{2}\Big)
\sum_{k\in\mathbb{Z}}  \left(1+|k|^{2}\right)^{s} |\widehat{f}(k)|^{2}\\
&\lesssim \norm{f}_{H^{s}(\mathbb{T})}^{2},
\end{align*}
where the last inequality follows from the uniform bound on
$\sup_k (1+k^2)|\mathsf{s}(k)|^2$ proved above. This yields \eqref{bound:smooth} and completes the proof.
\end{proof}

\begin{remark}[On the cases $\nu=0$ and $\kappa=0$ in Lemma~\ref{lemma:op}]\label{rem:nu0-kappa0}
	Lemma~\ref{lemma:op} is stated under $\nu>0$ and $\kappa>0$ so that the operator
	$\mathcal P=(\kappa-\frac{\nu}{2}\partial_{xx})^{-1}$ is an everywhere defined Fourier multiplier on $H^s(\TT)$
	(in particular on the zero mode) and the identity \eqref{twoder:helm} makes sense.
	
	\smallskip
	\noindent\emph{(i) The purely elastic case $\nu=0$.}
	When $\nu=0$ we have $\mathcal P=\kappa^{-1}\Id$ and hence $\mathcal P$ no longer provides two derivatives of smoothing;
	instead it is a bounded zeroth-order multiplier. In particular,
	\[
	\|\mathcal P f\|_{H^s(\TT)}\le \kappa^{-1}\|f\|_{H^s(\TT)},\qquad s\in\RR,
	\]
	and \eqref{twoder:helm} is not available. The operator $\mathcal M$ remains well-defined for $\kappa>0$ and
	\eqref{multiplierP-multiplierM} reduces to
	\[
	\mathsf{m}(k)=\Big(1+4\frac{|k|^2}{\kappa^2}\Big)^{-1}\Big(1+\frac{2ik}{\kappa}\Big),
	\]
	so that $\mathcal M=\Id+\mathcal S$ still holds with a smoothing remainder $\mathcal S$ of degree $-1$; in particular,
	\eqref{bound:smooth} remains valid (with constants depending on $\kappa$).
	
	\smallskip
	\noindent\emph{(ii) Vanishing friction $\kappa=0$.}
	If $\kappa=0$ and $\nu>0$, then $\mathcal P=((\nu/2)(-\partial_{xx}))^{-1}$ is not defined on the zero Fourier mode.
	However, on the subspace of mean-zero functions (i.e.\ $\widehat f(0)=0$) one can still define $\mathcal P$ by
	\[
	\widehat{\mathcal P f}(k)=\frac{2}{\nu|k|^2}\widehat f(k),\qquad k\neq 0,\qquad \widehat{\mathcal P f}(0)=0,
	\]
	which yields the smoothing estimate \eqref{estimate:P:smooth} on mean-zero data.
	In this case \eqref{twoder:helm} simplifies to $\partial_{xx}\mathcal P=-\frac{2}{\nu}\Id$ on mean-zero functions.
	The multiplier $\mathsf{m}(k)$ in \eqref{multiplierP-multiplierM} is also well-defined for $k\neq 0$ and extends to
	the mean-zero subspace; consequently, the decomposition $\mathcal M=\Id+\mathcal S$ and the bound \eqref{bound:smooth}
	remain valid on mean-zero functions. From a modeling viewpoint, setting $\kappa=0$ suppresses the viscous (e.g.\ the Poiseuille-type damping), and is therefore not physiologically meaningful for blood flow in arteries except as a purely idealized limit. For this reason, we do not treat the case $\kappa=0$ in the present work.
\end{remark}

\subsection{Plan of the paper}
In Section~\ref{sec:derivation} we present the asymptotic derivation of a unidirectional model from the blood flow system
\eqref{sistema:2} using a multi-scale expansion. Section~\ref{sec:wp} is devoted to the local well-posedness of the resulting
unidirectional equation \eqref{asymptotic:model:contr}, obtained via a priori energy estimates and a standard mollification
procedure. In Section~\ref{sec:bbm-global} we consider the BBM regime ($\nu=0$) and prove global existence together with decay
for sufficiently small initial data. We conclude with numerical simulations for the full viscoelastic model ($\nu>0$), which
suggest that the local strong solutions constructed in Section~\ref{sec:wp} may develop a finite-time singularity.

\section{Derivation of the asymptotic blood flow models}\label{sec:derivation}
In this section, we provide the derivation of the asymptotic models \eqref{asymptotic:model:contr} by means of a multi-scale expansion. In order to obtain the first asymptotic model we recall that the system is given by
\begin{subequations}\label{system1:der}
    \begin{align}
        A_t + (Au)_x &= 0,\\
        u_t + u u_x &= -p_x - \frac{\kappa u}{A},
    \end{align}
\end{subequations}
\noindent together with the constitutive pressure law
\begin{equation*}
    p = p_{\text{ext}} + \frac{\beta}{A_0}\left(\sqrt{A} - \sqrt{A_0}\right) + \frac{\nu}{A_0}(\sqrt{A})_t.
\end{equation*}
For the sake of simplicity, we recall that we will take $A_0 = 1$. Thus, combining plugging the constitutive law into \eqref{system1:der}, the system is given by
\begin{subequations}\label{sistema_2.3}
    \begin{align}
        A_t + (Au)_x &=0,\\
   A\left[ u_t +uu_x + \beta \partial_x \left(\sqrt{A}\right) +  \nu \partial_{xt} \left(\sqrt{A}\right)\right]&=-{\kappa u}.
    \end{align}
\end{subequations}
Next, we linearize system \eqref{sistema_2.3} around the trivial solution \[\overline{u}_0=0,\ \overline{A}_0=1.\] More precisely, we look for perturbed solutions of the form
\begin{equation}\label{soluciones_cerca_triviales}
    A = \overline{A}_0 + \varepsilon h = 1+\varepsilon  h,\hspace{5mm} u= \overline{u}_0 + \varepsilon U = \varepsilon U,\hspace{5mm} 0<\varepsilon\ll 1.
\end{equation}

Substituting \eqref{soluciones_cerca_triviales} into system \eqref{sistema_2.3}, we observe that
\begin{subequations}
    \begin{align*}
        (1 + \varepsilon h)_t + ((1 + \varepsilon h)\varepsilon U)_x &= 0,\\
        (1 + \varepsilon h)\left[(\varepsilon U)_t + \varepsilon U (\varepsilon U)_x + \beta\partial_x\left(\sqrt{1 + \varepsilon h}\right)+\nu \partial_{xt}\left(\sqrt{1+ \varepsilon h }\right)\right]&=-\kappa \varepsilon U.
    \end{align*}
\end{subequations}

Using the Taylor expansion
\begin{equation*}
	\sqrt{1 + \varepsilon h}
	= 1+ \frac{1}{2} \varepsilon  h -\frac{1}{8} \varepsilon^2h^2 +\mathcal{O}( \varepsilon^3),
\end{equation*}
we expand the derivatives in $\varepsilon$. Notice that every term in the bracket in the second equation above is of order
$\mathcal{O}(\varepsilon)$; hence we may divide the second equation by $\varepsilon$.
Keeping all contributions up to order $\varepsilon$ (and discarding $\mathcal{O}(\varepsilon^2)$ remainders), we obtain
\begin{subequations}
	\begin{align*}
		h_t +  U_x + \varepsilon(hU)_x &=0,\\
		(1+\varepsilon h)U_t +\varepsilon UU_x + \frac{\beta}{2} h_x + \frac{\beta}{4}\varepsilon hh_x
		+  \nu \left(\frac{1}{2} h_{xt} - \frac{1}{4}\varepsilon h_th_x + \frac{1}{4}\varepsilon hh_{xt}\right)+\kappa U&=0.
	\end{align*}
\end{subequations}

The main idea is to construct an asymptotic expansion of solutions to the previous system in the small parameter
$0<\varepsilon\ll1$. We therefore seek $h$ and $U$ in the form of the formal series
\begin{equation}\label{eq:ansatz-series}
	h(x,t)= \sum_{\ell=0}^{\infty} \varepsilon^\ell h^{(\ell)}(x,t),
	\qquad
	U(x,t) = \sum_{\ell=0}^{\infty} \varepsilon^{\ell}U^{(\ell)}(x,t).
\end{equation}
Substituting \eqref{eq:ansatz-series} into the system and identifying coefficients of equal powers of $\varepsilon$,
we obtain a cascade of linear forced problems: for each $\ell\ge 0$, the pair $(h^{(\ell)},U^{(\ell)})$ solves
\begin{subequations}\label{eq:cascade}
	\begin{align}
		h_t^{(\ell)} + U_x^{(\ell)}
		+ \sum_{j=0}^{\ell-1}\big(h^{(j)}U^{(\ell-1-j)}\big)_x
		&=0,\label{eq:cascade-mass}\\[0.2cm]
		U_t^{(\ell)}+  \frac{\beta}{2} h_x^{(\ell)}+\frac{\nu}{2} h_{xt}^{(\ell)}+\kappa U^{(\ell)}
		+ \sum_{j=0}^{\ell-1}\bigg(
		h^{(j)} U_t^{(\ell-1-j)}+ U^{(j)}U_x^{(\ell-1-j)}
		+\frac{\beta}{4}  h^{(j)}h_x^{(\ell-1-j)}
		&\nonumber\\
		\hspace{4.2cm}
		-\frac{\nu}{4}h_t^{(j)}h_x^{(\ell-1-j)}
		+\frac{\nu}{4}h^{(j)}h_{xt}^{(\ell-1-j)}
		\bigg)
		&=0.\label{eq:cascade-mom}
	\end{align}
\end{subequations}
Here, by convention the sums are empty when $\ell=0$, so that \eqref{eq:cascade} reduces to the linear homogeneous
system for $(h^{(0)},U^{(0)})$, while for $\ell\ge1$ the right-hand sides are determined by the profiles computed at
lower orders. This cascade of equations can be solved recursively. In particular, the first term corresponding to $\ell=0$ is given by the system
\begin{subequations}\label{sistema_2.11}
	\begin{align}
		h^{(0)}_t + U^{(0)}_x  &= 0, \label{sistema_2.11a} \\
		U^{(0)}_t + \frac{\beta}{2} h^{(0)}_x+\frac{\nu}{2}h^{(0)}_{xt} + \kappa U^{(0)}&=0. \label{sistema_2.11b}
	\end{align}
\end{subequations}

Thus, by differentiating \eqref{sistema_2.11a} with respect to $t$ and \eqref{sistema_2.11b} with respect to $x$, we obtain
\[
h^{(0)}_{tt} = -U^{(0)}_{xt}, \qquad
U^{(0)}_{xt} + \frac{\beta}{2} h^{(0)}_{xx}+\frac{\nu}{2}h^{(0)}_{xxt} + \kappa U^{(0)}_{x}=0.
\]
Combining both identities yields
\begin{equation}
	h^{(0)}_{tt} =  \frac{\beta}{2}  h^{(0)}_{xx}+\frac{\nu}{2}h^{(0)}_{xxt} + \kappa U^{(0)}_x.
\end{equation}
Since \eqref{sistema_2.11a} implies $U_x^{(0)}=-h_t^{(0)}$, we conclude that
\begin{equation}
	h^{(0)}_{tt} =  \frac{\beta}{2}  h^{(0)}_{xx}+\frac{\nu}{2}h^{(0)}_{xxt} - \kappa h^{(0)}_t,
\end{equation}
or equivalently,
\begin{equation}\label{eq2.15}
	\left(\partial_{tt} - \frac{\beta}{2}\partial_{xx} + \kappa\partial_t-\frac{\nu}{2}\partial_{xxt}\right)h^{(0)}=0.
\end{equation}
The next term in the cascade, corresponding to $\ell=1$, is given by
\begin{subequations}\label{sistema_2.16}
	\begin{align}
		h^{(1)}_t + U^{(1)}_x + \left(h^{(0)}U^{(0)}\right)_x
		&= 0,\label{2.16a}\\[0.15cm]
		U_t^{(1)}+  \frac{\beta}{2} h_x^{(1)}+\frac{\nu}{2} h_{xt}^{(1)}+\kappa U^{(1)}
		+\bigg( h^{(0)} U_t^{(0)}+ U^{(0)}U_x^{(0)}+\frac{\beta}{4}  h^{(0)}h_x^{(0)}
		&\nonumber\\
		\hspace{3.2cm}
		-\frac{\nu}{4}h_t^{(0)}h_x^{(0)}+\frac{\nu}{4}h^{(0)}h_{xt}^{(0)}\bigg)
		&=0.\label{2.16b}
	\end{align}
\end{subequations}
Differentiating \eqref{2.16a} with respect to $t$ and \eqref{2.16b} with respect to $x$, and eliminating $U^{(1)}_{xt}$, we obtain
\begin{equation}\label{eq2.17}
	h^{(1)}_{tt}-\frac{\beta}{2}h^{(1)}_{xx}-\frac{\nu}{2}h^{(1)}_{xxt}-\kappa U^{(1)}_{x}
	=-\left(h^{(0)}U^{(0)}\right)_{xt}
	+\bigg(h^{(0)} U_t^{(0)}+ U^{(0)}U_x^{(0)}
	+\frac{\beta}{4}  h^{(0)}h_x^{(0)}
	-\frac{\nu}{4}h_t^{(0)}h_x^{(0)}+\frac{\nu}{4}h^{(0)}h_{xt}^{(0)}\bigg)_{x}.
\end{equation}
Using \eqref{2.16a}, we have
\[
U^{(1)}_x=-h^{(1)}_t-\big(h^{(0)}U^{(0)}\big)_x.
\]
Substituting this identity into \eqref{eq2.17} and regrouping the terms, we obtain
\begin{equation}\label{eq:LF}
	\left(\partial_{tt} - \frac{\beta}{2}\partial_{xx} + \kappa\partial_t-\frac{\nu}{2}\partial_{xxt}\right)h^{(1)}
	=\mathcal{F}(h^{(0)}, U^{(0)}),
\end{equation}
where the forcing term is given by
\begin{equation}\label{funcionF}
	\mathcal{F}(h^{(0)}, U^{(0)}):=\bigg(-\kappa h^{(0)}U^{(0)}-h_{t}^{(0)}U^{(0)}+U^{(0)}U_x^{(0)}
	+\frac{\beta}{4}  h^{(0)}h_x^{(0)}
	-\frac{\nu}{4}h_t^{(0)}h_x^{(0)}+\frac{\nu}{4}h^{(0)}h_{xt}^{(0)}\bigg)_{x}.
\end{equation}

Observing now that from \eqref{sistema_2.11a} we have $U_x^{(0)}=-h_t^{(0)}$, we can recover $U^{(0)}$
(up to its spatial mean) by applying the periodic inverse derivative. Imposing the normalization $\widehat{U^{(0)}}(0,0)=0$, and observing that the zero-mean condition $\widehat{U^{(0)}}(0,t)=0$ is preserved by the evolution equation \eqref{sistema_2.11b}, we define
\begin{equation}\label{eq2.22}
	U^{(0)}(x,t):=-\partial_x^{-1}h_t^{(0)}(x,t),
\end{equation}
where $\partial_x^{-1}$ denotes the Fourier multiplier given by
$\widehat{\partial_x^{-1}g}(k)=\frac{1}{ik}\widehat g(k)$ for $k\neq 0$ and $\widehat{\partial_x^{-1}g}(0)=0$. Therefore, using  \eqref{sistema_2.11a} together with \eqref{eq2.22}, we can rewrite $\mathcal{F}$ only in terms of $h^{(0)}$, namely
\begin{equation}\label{2.23-simplified}
\begin{aligned}
\mathcal{F}(h^{(0)})=
&\kappa h_x^{(0)}\,\partial_x^{-1}h_t^{(0)}
+\kappa h^{(0)} h_t^{(0)}
+2h_{tx}^{(0)}\,\partial_x^{-1}h_t^{(0)}
+2\big(h_t^{(0)}\big)^2\\
&+\frac{\beta}{4}\Big(\big(h_x^{(0)}\big)^2+h^{(0)}h_{xx}^{(0)}\Big)
-\frac{\nu}{4}h_t^{(0)}h_{xx}^{(0)}
+\frac{\nu}{4}h^{(0)}h_{xxt}^{(0)}.
\end{aligned}
\end{equation}
Thus, recalling \eqref{eq:LF} and \eqref{2.23-simplified}, we conclude that
	\begin{align}
		\left(\partial_{tt} - \frac{\beta}{2}\partial_{xx} + \kappa\partial_t-\frac{\nu}{2}\partial_{xxt}\right)h^{(1)}
		&=\mathcal{F}(h^{(0)}). \label{eq2.24a}
	\end{align}
Now, considering the new function
\[
f(x,t):=h^{(0)}(x,t) + \varepsilon h^{(1)}(x,t),
\]
using \eqref{eq2.15}--\eqref{eq2.24a} and neglecting terms of order $\mathcal{O}(\varepsilon^{2})$, we conclude that
$f$ satisfies
\begin{align}\label{eq2.29}
	\left(\partial_{tt} - \frac{\beta}{2}\partial_{xx} + \kappa\partial_t-\frac{\nu}{2}\partial_{xxt}\right)f
	&=\varepsilon\,\mathcal{F}(f)+\mathcal{O}(\varepsilon^2),
\end{align}
where $\mathcal{F}(f)$ is obtained from \eqref{2.23-simplified} by replacing $h^{(0)}$ with $f$. More precisely,
(with $\partial_x^{-1}$ understood in the periodic sense),
\begin{equation}\label{eq:Ff-simplified}
\mathcal{F}(f)=
\kappa f_x\,\partial_x^{-1}f_t
+\kappa f\, f_t
+2f_{tx}\,\partial_x^{-1}f_t
+2\big(f_t\big)^2
+\frac{\beta}{4}\Big(\big(f_x\big)^2+f f_{xx}\Big)
-\frac{\nu}{4}f_t f_{xx}
+\frac{\nu}{4}f\, f_{xxt}.
\end{equation}

Moreover, in order to derive a unidirectional version of \eqref{eq2.29}, we introduce the far-field variables
\[
\xi=x-t,\qquad \tau=\varepsilon t,
\]
and we write $f(x,t)=f(\xi,\tau)$. By the chain rule we have
\[
\partial_x=\partial_\xi,\qquad \partial_t=-\partial_\xi+\varepsilon\partial_\tau,
\]
and therefore
\[
f_x=f_\xi,\qquad f_{xx}=f_{\xi\xi},\qquad
f_t=-f_\xi+\varepsilon f_\tau,\qquad
f_{tt}=f_{\xi\xi}-2\varepsilon f_{\xi\tau}+\mathcal O(\varepsilon^2),
\qquad
f_{xxt}=-f_{\xi\xi\xi}+\varepsilon f_{\xi\xi\tau}.
\]
Consequently, the left-hand side of \eqref{eq2.29} becomes
\begin{align*}
	\Big(\partial_{tt}-\frac{\beta}{2}\partial_{xx}+\kappa\partial_t-\frac{\nu}{2}\partial_{xxt}\Big)f &=
	\Big(1-\frac{\beta}{2}\Big)f_{\xi\xi}+\kappa(-f_\xi+\varepsilon f_\tau)
	+\frac{\nu}{2}\Big(f_{\xi\xi\xi}-\varepsilon f_{\xi\xi\tau}\Big)
	-2\varepsilon f_{\xi\tau}
	+\mathcal O(\varepsilon^2)\\
	&=
	\Big(1-\frac{\beta}{2}\Big)f_{\xi\xi}-\kappa f_\xi+\frac{\nu}{2}f_{\xi\xi\xi}
	+\varepsilon\Big(\kappa f_\tau-2f_{\xi\tau}-\frac{\nu}{2}f_{\xi\xi\tau}\Big)
	+\mathcal O(\varepsilon^2).
\end{align*}
Next, we rewrite the right-hand side of \eqref{eq2.29} in $(\xi,\tau)$ variables. In far-field variables $\partial_x^{-1}$ becomes
$\partial_\xi^{-1}$ and, using $f_t=-f_\xi+\varepsilon f_\tau$ together with
\[
\partial_\xi^{-1}f_t=\partial_\xi^{-1}(-f_\xi+\varepsilon f_\tau)=-f+\varepsilon\,\partial_\xi^{-1}f_\tau,
\]
we obtain, that \eqref{eq:Ff-simplified} takes the form
\begin{align*}
	\mathcal F(f)
	&=\partial_\xi\bigg(
	-\kappa f^2
	+2 f f_\xi
	+\frac{\beta}{4}f f_\xi
	+\frac{\nu}{4}f_\xi^2
	-\frac{\nu}{4}f f_{\xi\xi}
	\bigg)
	+\mathcal O(\varepsilon).
\end{align*}
Therefore, inserting these expansions into \eqref{eq2.29} and neglecting $\mathcal O(\varepsilon^2)$ terms yields
\begin{align*}
	\Big(1-\frac{\beta}{2}\Big)f_{\xi\xi}-\kappa f_\xi+\frac{\nu}{2}f_{\xi\xi\xi}
	+\varepsilon\Big(\kappa f_\tau-2f_{\xi\tau}-\frac{\nu}{2}f_{\xi\xi\tau}\Big)
	=
	\varepsilon\,\partial_\xi\Big(
	-\kappa f^2+ \Big(2+\frac{\beta}{4}\Big) f f_\xi
	+\frac{\nu}{4}f_\xi^2
	-\frac{\nu}{4}f f_{\xi\xi}
	\Big).
\end{align*}
Finally, moving the $\varepsilon$-terms with $\tau$-derivatives to the left-hand side, we obtain the unidirectional
far-field equation
\begin{align*}
	\varepsilon\Big(\kappa-2\partial_\xi-\frac{\nu}{2}\partial_{\xi\xi}\Big)f_\tau
	&=
	-\Big(1-\frac{\beta}{2}\Big)\partial_{\xi\xi}f+\kappa\partial_\xi f-\frac{\nu}{2}\partial_{\xi\xi\xi}f
	+\varepsilon\,\partial_\xi\Big(
	-\kappa f^2+ \Big(2+\frac{\beta}{4}\Big) f f_\xi
	+\frac{\nu}{4}f_\xi^2
	-\frac{\nu}{4}f f_{\xi\xi}
	\Big),
\end{align*}
which is of $\mathcal O(\varepsilon^2)$ accuracy with respect to the bidirectional model \eqref{eq2.29}.
Using the trivial identities $\partial_\xi(f^2)=2ff_\xi$, $\partial_\xi(f_\xi^2)=2f_\xi f_{\xi\xi}$ and
$\partial_\xi(ff_{\xi\xi})=f_\xi f_{\xi\xi}+ff_{\xi\xi\xi}$, we can rewrite the previous equation in the form
\begin{align}\label{final:eq:asympt-structured}
	\varepsilon\Big(\kappa-2\partial_\xi-\frac{\nu}{2}\partial_{\xi\xi}\Big)f_\tau
	&=
	-\Big(1-\frac{\beta}{2}\Big)\partial_{\xi\xi}f+\kappa\partial_\xi f-\frac{\nu}{2}\partial_{\xi\xi\xi}f \nonumber\\
	&\quad
	+\varepsilon\Big(2+\frac{\beta}{4}\Big)(ff_\xi)_\xi
	+\varepsilon\,\frac{\nu}{2}\, f_\xi f_{\xi\xi}
	-\varepsilon\,\frac{\nu}{4}\,f f_{\xi\xi\xi}
	-2\varepsilon\kappa f f_\xi ,
\end{align}
where we have neglected $\mathcal O(\varepsilon^2)$ terms. Starting from \eqref{final:eq:asympt-structured}, we factor the operator on the left-hand side as
\[
\kappa-2\partial_\xi-\frac{\nu}{2}\partial_{\xi\xi}
=\left(\kappa-\frac{\nu}{2}\partial_{\xi\xi}\right)\left(\Id-\frac{2\partial_\xi}{\kappa-\frac{\nu}{2}\partial_{\xi\xi}}\right).
\]
Dividing by $\kappa-\frac{\nu}{2}\partial_{\xi\xi}$ and setting
\[
\mathcal{P}:=\left(\kappa-\frac{\nu}{2}\partial_{\xi\xi}\right)^{-1},
\]
where we recall that $\mathcal{P}$ is the inverse of the Helmholtz operator defined in \eqref{opP}, we obtain
\begin{align}\label{eq:preM}
	\varepsilon\left(\Id-\frac{2\partial_{\xi}}{\kappa-\frac{\nu}{2}\partial_{\xi\xi}}\right)f_{\tau}
	&=\mathcal{P}\bigg[
	- \Big(1-\frac{\beta}{2}\Big)\partial_{\xi\xi}f+\kappa\partial_{\xi}f-\frac{\nu}{2}\partial_{\xi\xi\xi}f \nonumber\\
	&\hspace{2.05cm}
	+\varepsilon\,\partial_\xi\Big(
	-\kappa f^2+\Big(2+\frac{\beta}{4}\Big) f f_\xi
	+\frac{\nu}{4}f_\xi^2-\frac{\nu}{4}f f_{\xi\xi}
	\Big)\bigg].
\end{align}

Applying the conjugate operator $\left(\Id+\frac{2\partial_{\xi}}{\kappa-\frac{\nu}{2}\partial_{\xi\xi}}\right)$ to both sides yields
\begin{align}\label{eq:conjugate}
	\varepsilon\left(\Id-4\left(\frac{\partial_{\xi}}{\kappa-\frac{\nu}{2}\partial_{\xi\xi}}\right)^{2}\right)f_{\tau}
	&=\left(\Id+\frac{2\partial_{\xi}}{\kappa-\frac{\nu}{2}\partial_{\xi\xi}}\right)\mathcal{P}\bigg[
	- \Big(1-\frac{\beta}{2}\Big)\partial_{\xi\xi}f+\kappa\partial_{\xi}f-\frac{\nu}{2}\partial_{\xi\xi\xi}f \nonumber\\
	&\hspace{2.05cm}
	+\varepsilon\,\partial_\xi\Big(
	-\kappa f^2+\Big(2+\frac{\beta}{4}\Big) f f_\xi
	+\frac{\nu}{4}f_\xi^2-\frac{\nu}{4}f f_{\xi\xi}
	\Big)\bigg].
\end{align}

Finally, inverting the operator $\Id-4(\partial_\xi\mathcal P)^2$ and using the definition of $\mathcal M$ in \eqref{opP},
namely that $\mathcal M=\big(\Id-4(\partial_\xi\mathcal P)^2\big)^{-1}\big(\Id+2\partial_\xi\mathcal P\big)$, we obtain
\begin{align}\label{eq:MP-final}
\varepsilon f_\tau
&=\mathcal M\,\mathcal P\bigg[
- \Big(1-\frac{\beta}{2}\Big)\partial_{\xi\xi}f+\kappa\partial_{\xi}f-\frac{\nu}{2}\partial_{\xi\xi\xi}f \nonumber\\
&\hspace{2.2cm}
+\varepsilon\,\partial_\xi\Big(
-\kappa f^2+\Big(2+\frac{\beta}{4}\Big) f f_\xi
+\frac{\nu}{4}f_\xi^2-\frac{\nu}{4}f f_{\xi\xi}
\Big)\bigg],
\end{align}
where we have neglected $\mathcal O(\varepsilon^2)$ terms. Thus, dividing both sides by $\varepsilon$ and reverting to $(x,t)$ variables for simplicity's sake we arrive to the asymptotic model \eqref{asymptotic:model:contr}.
\begin{remark}
In this article, the asymptotic models we derive are accurate up to $\mathcal{O}(\varepsilon^2)$, and hence all higher-order contributions are neglected. A natural direction for future work is to retain terms beyond quadratic order and investigate whether they produce genuinely new effects compared to the $\mathcal{O}(\varepsilon^2)$ approximation. 
We also note that, in the course of deriving the unidirectional asymptotic model \eqref{asymptotic:model:contr}, we obtained an alternative model with the same $\mathcal{O}(\varepsilon^2)$ precision, namely \eqref{eq2.29}--\eqref{eq:Ff-simplified}. While it would be interesting to study, for instance, the well-posedness of \eqref{eq2.29}--\eqref{eq:Ff-simplified}, we do not pursue this analysis here.
\end{remark}

\section{Well-posedness of the unidirectional asymptotic model}\label{sec:wp}
In this section, we show the local existence and uniqueness of strong solutions in Sobolev spaces of \eqref{asymptotic:model:contr}. More precisely, we show the following result:

\begin{theorem}\label{th:existence:strong}
	Let $\nu>0$ and $\kappa>0$, let $s>\frac{5}{2}$, and let $f_0\in H^s(\TT)$ be mean-zero initial data for \eqref{asymptotic:model:contr}.
	Then there exists $T>0$, depending only on $\|f_0\|_{H^s(\TT)}$ and the parameters of the model, and a unique strong solution
	\[
	f\in C([0,T];H^s(\TT)),\qquad f(\cdot,0)=f_0.
	\]
\end{theorem}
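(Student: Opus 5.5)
The plan is to treat \eqref{asymptotic:model:contr} as a transport (Burgers-type) equation plus a remainder that is bounded on $H^s$. To do so, I first use Lemma~\ref{lemma:op} to reveal the true order of each term. The key tool is \eqref{twoder:helm}, i.e.\ $\partial_{xx}\mathcal P=-\frac{2}{\nu}\Id+\frac{2\kappa}{\nu}\mathcal P$, together with $\mathcal M=\Id+\mathcal S$.

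\textbf{Step 1: reduction to transport form.} For the linear part, $-\frac{\nu}{2}\mathcal P f_{xxx}=f_x-\kappa\mathcal P f_x$. Hence
\[
-\tfrac{1}{\varepsilon}\tfrac{\nu}{2}\mathcal M\mathcal P f_{xxx}=\tfrac1\varepsilon f_x+\tfrac1\varepsilon\big(\mathcal S f_x-\kappa\mathcal M\mathcal P f_x\big).
\]
The bracket is of order $0$ by \eqref{estimate:P:smooth}--\eqref{bound:smooth}. The terms $\mathcal M\mathcal P f_{xx}$ and $\mathcal M\mathcal P f_x$ are also of order $\le 0$. For the nonlinear part, $(ff_x)_x$ and $-2\kappa ff_x$ composed with $\mathcal M\mathcal P$ are of order $\le 0$ (respectively $\le -1$) acting on products. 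The term $f_xf_{xx}=\frac12(f_x^2)_x$ gives $\mathcal M\mathcal P\partial_x(f_x^2)$, which maps $H^s$ to $H^s$ since $f_x^2\in H^{s-1}$ is an algebra element. The only top-order term is $-\frac\nu4\mathcal M\mathcal P(ff_{xxx})$. I write $ff_{xxx}=(ff_x)_{xx}-(f_x^2)_x-f_xf_{xx}$ and use \eqref{twoder:helm} again:
\[
-\tfrac\nu4\mathcal M\mathcal P(ff_x)_{xx}=\tfrac12 ff_x+\tfrac12\mathcal S(ff_x)-\tfrac\kappa2\mathcal M\mathcal P(ff_x).
\]
Altogether the equation becomes
\[
f_t=\tfrac1\varepsilon f_x+\tfrac12 ff_x+\mathcal R(f),
\qquad
\|\mathcal R(f)\|_{H^s}\lesssim \|f\|_{H^s}+\|f\|_{H^s}^2 .
\]
Moreover $\mathcal R$ is locally Lipschitz on $H^s$ and even on $L^2$-type lower norms, with constants depending on $\|f\|_{H^s}$. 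Every term on the right is an $x$-derivative, and $\mathsf m(0)=1$, so the zero mode is preserved. Thus mean-zero is propagated, which justifies using \eqref{Sob:e}--\eqref{Alg:pro}.

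\textbf{Step 2: a priori estimate.} I apply $\Lambda^s$ and pair with $\Lambda^s f$. The term $\frac1\varepsilon\Lambda^s f_x$ gives zero, since it is skew-adjoint. For the Burgers term I write $\Lambda^s(ff_x)=f\Lambda^s f_x+[\Lambda^s,f]f_x$. The first piece is handled by integration by parts, giving $-\frac12\int f_x|\Lambda^sf|^2$. The commutator is bounded through \eqref{KPcom} by $\|f_x\|_{L^\infty}\|f\|_{\dot H^s}$. This yields
\[
\frac{d}{dt}\|f\|_{H^s}^2\le C\big(1+\|f\|_{H^s}\big)\|f\|_{H^s}^2 ,
\]
and hence a uniform bound on $[0,T]$ with $T\sim (1+\|f_0\|_{H^s})^{-1}$.

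\textbf{Step 3: construction, uniqueness, continuity.} I would regularize with a Friedrichs mollifier $J_\delta$: $f^\delta_t=J_\delta\big[\frac1\varepsilon (J_\delta f^\delta)_x+\frac12 J_\delta f^\delta (J_\delta f^\delta)_x+\mathcal R(J_\delta f^\delta)\big]$. This is a Lipschitz ODE on mean-zero $H^s$, so Picard applies. The estimate of Step 2 holds uniformly in $\delta$ because $J_\delta$ is self-adjoint and commutes with $\Lambda^s$. Passing to the limit uses the uniform $H^s$ bound, a bound on $f^\delta_t$ in $H^{s-1}$, and Aubin--Lions, which give convergence in $C([0,T];H^{s'})$ for $s'<s$. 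The limit then lies in $L^\infty H^s\cap C_w H^s$. For uniqueness, the difference $g=f_1-f_2$ satisfies an $L^2$ estimate $\frac{d}{dt}\|g\|_{L^2}^2\le C(\|f_1\|_{H^s},\|f_2\|_{H^s})\|g\|_{L^2}^2$. Here the term $f_1g_x$ is integrated by parts and $g\,\partial_xf_2$ is bounded using $\|\partial_x f_2\|_{L^\infty}$. Grönwall then closes the argument. Strong continuity in $H^s$ follows from continuity of $t\mapsto\|f(t)\|_{H^s}$ (from the energy identity) combined with weak continuity, or alternatively via a Bona--Smith argument.

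The main obstacle is Step 1, specifically the term $\mathcal M\mathcal P(ff_{xxx})$. One must check that after extracting $\frac12 ff_x$ nothing of order $>0$ remains, and in particular that the $\frac1\varepsilon f_xf_{xx}$ term and all $\mathcal S$-corrections close in $H^s$. I would verify this first. If the bookkeeping leaves a derivative loss, I would fall back on commutator estimates for $[\Lambda^s,\mathcal M\mathcal P\,f]\partial_x^3$ built from the symbol bounds of Lemma~\ref{lemma:op}.
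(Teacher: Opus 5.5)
Your proposal is correct, and it reaches the result by a different organization of the same ingredients as the paper.

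\textbf{The step you flag as the main risk closes.} Write $ff_{xxx}=(ff_x)_{xx}-3f_xf_{xx}$. Then both $f_xf_{xx}$ contributions (the original $\frac{\nu}{4\varepsilon}f_xf_{xx}$ and the new $\frac{3\nu}{4}f_xf_{xx}$) appear as multiples of $\mathcal{M}\mathcal{P}\partial_x(f_x^2)$. This maps $H^s\to H^s$ because $\mathcal{M}\mathcal{P}\partial_x$ has order $-1$ and $H^{s-1}$ is an algebra. The remaining piece $\tfrac12\mathcal{S}(ff_x)=\tfrac14\mathcal{S}\partial_x(f^2)$ has order $0$ acting on $f^2$. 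So no fallback commutator estimate is needed.

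\textbf{Comparison with the paper.} The tools are the same: $\mathcal{M}=\Id+\mathcal{S}$, the Helmholtz identity \eqref{twoder:helm}, and Kato--Ponce for the Burgers-type term. The paper applies them term by term inside the energy integrals $I_j$ and $J_{ij}$. Its treatment of $J_{23}$ (two integrations by parts, then \eqref{twoder:helm}) is exactly your identity for $ff_{xxx}$, seen through the pairing with $\Lambda^{2s}\mathcal{P}f$. You instead do the reduction once, at the level of the equation, and obtain $f_t=\frac1\varepsilon f_x+\frac12 ff_x+\mathcal{R}(f)$ with $\mathcal{R}$ bounded and locally Lipschitz on $H^s$.

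This buys several things:
\begin{enumerate}
\item It makes explicit that the model is a Burgers equation with a constant-speed transport term plus a perturbation bounded on $H^s$.
\item Your bound on $\mathcal{R}$ only uses that $H^{s-1}$ is an algebra and that $\|f_x\|_{L^\infty}\lesssim\|f\|_{H^s}$, so your argument actually works for $s>\frac32$, not only $s>\frac52$.
\item The $H^{s-1}$ bound on $f_t$ needed for Aubin--Lions follows directly. There is no need to control products such as $f\,f_{xxx}$ in the negative-order space $H^{s-3}$.
\item Uniqueness closes in $L^2$. The paper's $\mathcal{P}^{-1}$-weighted $H^1$ energy instead uses $W^{2,\infty}$ bounds on both solutions.
\end{enumerate}

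The paper's route keeps the equation in its derived form, and its term-by-term bounds are reused for the tame estimate behind Theorem~\ref{th:continuation}. Your form gives that same criterion too. For mean-zero $f$, every quadratic part of $\mathcal{R}$ satisfies a tame bound of the form $\|f_x\|_{L^\infty}\|f\|_{H^s}$.
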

\begin{remark}
	The regularity threshold $s>\frac52$ in Theorem~\ref{th:existence:strong} is imposed to handle the highest--order nonlinearities
	in \eqref{asymptotic:model:contr}, in particular the cubic term $f\,f_{xxx}$ (and the related product $f_x f_{xx}$), which
	require additional control to define the nonlinearity and close the estimates in a strong sense.
	In the BBM regime $\nu=0$, these dispersive/quasilinear terms disappear and the equation reduces to a semilinear BBM--type model of lower differential order. As a consequence, one can prove local existence and uniqueness already for initial data
	$f_0\in H^s(\TT)$ with $s>\frac32$ (using the standard energy method and Sobolev algebra/embedding properties in one dimension).
	This better behavior is also reflected later in the paper: in Section~\ref{sec:bbm-global} we obtain a global existence and decay result 	for small $H^2$ data in the case $\nu=0$.
\end{remark}

\begin{proof}[Proof of Theorem \ref{th:existence:strong}]
The proof follows from the combination of standard priori energy estimates and the use of a suitable approximation procedure by means of mollifiers, cf. \cite{MajdaBertozzi2002}. Then, let us first focus on deriving the a priori estimates and later explain how to construct such solution via mollification. To conclude we will also show the uniqueness. In order to structure the proof properly we split it into several steps.

\subsubsection*{\underline{Step 1: conservation of the mean and a priori estimates}}

Let us first check that the mean is conserved along solutions, i.e.,
\begin{equation}\label{mean:prop}
	\int_{\TT} f(x,t)\,dx=\int_{\TT} f_{0}(x)\,dx.
\end{equation}

\begin{align}\label{asymptotic:model:contr2}
	f_{t}&=\mathcal{M}\mathcal{P}\bigg[ - \frac{1}{\varepsilon}\Big(1-\frac{\beta}{2}\Big)f_{xx}
	+ \frac{1}{\varepsilon}\kappa f_{x}
	- \frac{1}{\varepsilon}\frac{\nu}{2}f_{xxx}
	+ \Big(2+\frac{\beta}{4}\Big)(f f_{x})_{x}    \nonumber \\
	&\hspace{4.7cm}+ \frac{1}{\varepsilon}\frac{\nu}{4}f_{x}f_{xx}
	-\frac{\nu}{4}ff_{xxx}
	-2\kappa ff_{x}\bigg],
\end{align}

Indeed, rewriting \eqref{asymptotic:model:contr2} as
\begin{align*}
	f_{t}&=\mathcal{M}\mathcal{P}\partial_{x}\bigg[ - \frac{1}{\varepsilon}\Big(1-\frac{\beta}{2}\Big)f_{x}+ \frac{1}{\varepsilon}\kappa f
	- \frac{1}{\varepsilon}\frac{\nu}{2}f_{xx} + \Big(2+\frac{\beta}{4}\Big)f f_{x}
	+ \frac{1}{\varepsilon}\frac{\nu}{8}f_{x}^{2} -\kappa f^{2}\bigg]
	-\frac{\nu}{4}\mathcal{M}\mathcal{P}\Big(ff_{xxx}\Big),
\end{align*}
we obtain (with our Fourier convention)
\[
\widehat{f_t}(0,t)
=-\frac{\nu}{4}\,\widehat{\mathcal{M}\mathcal{P}}(0)\,\widehat{ff_{xxx}}(0,t)
= -\frac{\nu}{4}\,\widehat{\mathcal{M}\mathcal{P}}(0)\int_{\TT} f f_{xxx}\,dx = 0,
\]
since $\int_{\TT} f f_{xxx}\,dx=0$ by periodicity. Moreover,
$\widehat{\mathcal{M}\mathcal{P}}(0)=1/\kappa$ by \eqref{multiplierP-multiplierM}.
Therefore $\widehat{f}(0,t)=\widehat{f}(0,0)$, which is equivalent to \eqref{mean:prop}. \medskip

We define an energy $\mathcal{E}(t)=\norm{f}_{L^{2}(\TT)}^{2}+ \norm{\Lambda^{s}f}_{L^{2}(\TT)}^{2}$ which is of course equivalent to the square of the $H^{s}$ norm of $f$. In the sequel 
we will show that for $s>\frac{5}{2}$ the energy satisfies
\[
\frac{d}{dt}\mathcal{E}(t)\le C\big(\mathcal{E}(t)+\mathcal{E}(t)^{3/2}\big).
\]

 We begin by estimating the evolution of the $L^{2}$ norm of $f$. Testing equation \eqref{asymptotic:model:contr} with $f$ and integrating by parts we find that
\[
\frac{1}{2}\frac{d}{dt}\norm{f}_{L^{2}}^{2}= I_{1}+I_{2} 
\]
with 
\[I_{1}= \int_{\TT} \mathcal{M}\mathcal{P}\bigg[ - \frac{1}{\varepsilon}(1-\frac{\beta}{2})f_{xx}+ \frac{1}{\varepsilon}\kappa f_{x}- \frac{1}{\varepsilon}\frac{\nu}{2}f_{xxx}\bigg] f \ dx,\]
\[I_{2}=\int_{\TT} \mathcal{M}\mathcal{P}\bigg[\left(2+\frac{\beta}{4}\right)(f f_{x})_{x} + \frac{1}{\varepsilon}\frac{\nu}{4}f_{x}f_{xx}  -\frac{\nu}{4}ff_{xxx}-2\kappa ff_{x}\bigg]f \ dx. \]
Then, by means of Lemma \eqref{lemma:op} we may immediately bound $I_{1}$ as
\begin{align*}
\abs{I_{1}}&\lesssim \left(\norm{\mathcal{M}\mathcal{P}f_{xx}}_{L^{2}}+\norm{\mathcal{M}\mathcal{P}f_{x}}_{L^{2}}+\norm{\mathcal{M}\mathcal{P}f_{xxx}}_{L^{2}}\right)\norm{f}_{L^{2}} \\
&\lesssim  \norm{f}_{L^{2}}^{2}+\norm{f_{x}}_{L^{2}}\norm{f}_{L^2}\lesssim  \norm{f}_{L^{2}}^{2}+\norm{f_{x}}_{L^{2}}^{2}\lesssim \mathcal{E}(t).
\end{align*}
In order to bound the nonlinear terms in $I_{2}$ we use the decomposition $\mathcal{M}=(\mathsf{Id}+\mathcal{S})$ to write
\begin{align*}
I_{2}=\int_{\TT}\mathcal{P}\bigg[\left(2+\frac{\beta}{4}\right)(f f_{x})_{x} + \frac{1}{\varepsilon}\frac{\nu}{4}f_{x}f_{xx} -\frac{\nu}{4}ff_{xxx}-2\kappa ff_{x}\bigg]f \ dx + \mbox{l.o.t}
\end{align*}
The lower order terms are even easier to bound since $\mathcal{S}$ is a smoothing operator of order $-1$.  Integrating by parts several times and using the fact that $\mathcal{P}$ is self-adjoint together with bound \eqref{estimate:P:smooth} of Lemma \ref{lemma:op} we find that
\begin{align*}
\abs{I_{2}}&\lesssim \norm{f}_{L^{2}}^{2}\norm{f_{x}}_{L^{2}}+\norm{f_{x}}_{L^{2}}^{2}\norm{f}_{L^{2}}+\norm{f}_{L^{2}}^{3}\lesssim  \mathcal{E}^{3/2}(t).
\end{align*}
Thus,
\begin{equation}\label{estimate:L2}
\frac{d}{dt}\norm{f}_{L^{2}}^{2}\lesssim \left( \mathcal{E}(t)+\mathcal{E}^{3/2}(t) \right).
\end{equation}
To derive the evolution of the higher order norm, we apply $\Lambda^s$ to \eqref{asymptotic:model:contr2} and take the $L^2$ inner product with $\Lambda^s f$ to find that

\[
\frac{1}{2}\frac{d}{dt}\norm{\Lambda^{s}f}_{L^{2}}^{2}= J_{1}+J_{2}
\]
with
\[J_{1}= \int_{\TT}  \mathcal{M}\mathcal{P}\bigg[ - \frac{1}{\varepsilon}(1-\frac{\beta}{2})f_{xx}+ \frac{1}{\varepsilon}\kappa f_{x}- \frac{1}{\varepsilon}\frac{\nu}{2}f_{xxx}\bigg] \Lambda^{2s} f \ dx,\]
\[J_{2}=\int_{\TT} \mathcal{M}\mathcal{P}\bigg[\left(2+\frac{\beta}{4}\right)(f f_{x})_{x} + \frac{1}{\varepsilon}\frac{\nu}{4}f_{x}f_{xx}  -\frac{\nu}{4}ff_{xxx}-2\kappa ff_{x}\bigg] \Lambda^{2s} f \ dx. \]
Let us first deal with the linear terms in $J_{1}$. Using once again that $\mathcal{M}=(\mathsf{Id}+\mathcal{S})$ we write
\begin{align*}
J_{1}&=\int_{\TT} \mathcal{P}\bigg[ - \frac{1}{\varepsilon}(1-\frac{\beta}{2})f_{xx}+ \frac{1}{\varepsilon}\kappa f_{x}- \frac{1}{\varepsilon}\frac{\nu}{2}f_{xxx}\bigg] \Lambda^{2s} f \ dx+ \mbox{l.o.t}\\
&= J_{11}+J_{12}+J_{13}+ \mbox{l.o.t}
\end{align*}
Using \eqref{twoder:helm} of Lemma \eqref{lemma:op} we find that 
\[ J_{11}= - \frac{2}{\nu\varepsilon}(1-\frac{\beta}{2}) \int_{\TT} f \Lambda^{2s} f \ dx +  \frac{2\kappa }{\nu\varepsilon}(1-\frac{\beta}{2})\int_{\TT} \mathcal{P}f \Lambda^{2s} f \ dx \lesssim \norm{\Lambda^{s}f}_{L^{2}}^{2}. \]
On the other hand, since $\Lambda^{s}$ and $\mathcal{P}$ are self-adjoint we obtain that 
\[ J_{12}= \frac{\kappa}{2\varepsilon} \int_{\TT} \partial_{x}\left(\Lambda^{s}\mathcal{P}^{1/2} f \right)^{2} \ dx = 0, \quad  J_{13}= \frac{\nu}{4\varepsilon} \int_{\TT} \partial_{x}\left(\Lambda^{s}\mathcal{P}^{1/2} f_{x}\right)^{2} \ dx=0.\]
Since there has been a cancellation in $J_{12}, J_{13}$ due to the special structure, it is important to check that the \textit{lower order terms} coming from the operator $\mathcal{S}$ (where such cancellation is not available) can be bounded. Indeed, both terms are given by
\[ J_{12}^{low}=\frac{1}{\varepsilon}\kappa \int_{\TT} \mathcal{S}\mathcal{P}  f_{x} \Lambda^{2s} f \ dx, \quad J_{13}^{low}= - \frac{1}{\varepsilon}\frac{\nu}{2}\int_{\TT} \mathcal{S}\mathcal{P}  f_{xxx} \Lambda^{2s} f \ dx.\]
Therefore, by means of \eqref{estimate:P:smooth} and \eqref{bound:smooth} in Lemma  \ref{lemma:op} 
\[ \abs{J_{12}^{low}}\lesssim  \norm{\mathcal{S}\mathcal{P} \Lambda^{s} f_{x}}_{L^{2}}\norm{\Lambda^{s}f}_{L^{2}}\lesssim \norm{\Lambda^{s}f}_{L^{2}}^{2}, \quad  \abs{J_{13}^{low}}\lesssim  \norm{\mathcal{S}\mathcal{P} \Lambda^{s} f_{xxx}}_{L^{2}}\norm{\Lambda^{s}f}_{L^{2}}\lesssim \norm{\Lambda^{s}f}_{L^{2}}^{2}.\]
Hence, combining the previous estimates we have shown that
\begin{equation}\label{estimate:J1}
\abs{J_{1}}=\abs{J_{11}+J_{12}+J_{13}+ \mbox{l.o.t}} \lesssim \norm{\Lambda^{s}f}_{L^{2}}^{2}.
\end{equation}
The bound the non-linear terms in $J_{2}$ we write again $\mathcal{M}=(\mathsf{Id}+\mathcal{S})$ so that
\begin{align*}
J_{2}&=\int_{\TT}\mathcal{P}\bigg[\left(2+\frac{\beta}{4}\right)(f f_{x})_{x} + \frac{1}{\varepsilon}\frac{\nu}{4}f_{x}f_{xx}  -\frac{\nu}{4}ff_{xxx}-2\kappa ff_{x}\bigg] \Lambda^{2s} f \ dx+ \mbox{l.o.t} \\
&= J_{21}+J_{22}+J_{23}+J_{24}+ \mbox{l.o.t}
\end{align*}
Integrating by parts and using identity \eqref{twoder:helm} we have that
\begin{align*}
 J_{21}=\frac{\left(2+\frac{\beta}{4}\right)}{2}  \int_{\TT } f^{2} \Lambda^{2s} \mathcal{P} f_{xx} \ dx&= \frac{\left(2+\frac{\beta}{4}\right)}{\nu} \int_{\TT }   \Lambda^{s}(f^{2}) \Lambda^{s}f \ dx - \frac{\kappa\left(2+\frac{\beta}{4}\right)}{\nu}  \int_{\TT }\Lambda^{s}(f^{2}) \Lambda^{s}\mathcal{P}f \ dx \\
 & \quad \quad \lesssim \norm{\Lambda^{s}(f^{2})}_{L^{2}}\norm{\Lambda^{s}f}_{L^{2}}\lesssim \norm{\Lambda^{s}f}_{L^{2}}^{3}.
\end{align*}
Similarly,
\begin{align*}
 J_{22}=-\frac{1}{\varepsilon}\frac{\nu}{4} \int_{\TT } f_{x}^{2} \Lambda^{2s} \mathcal{P} f_{x} \ dx&= -\frac{1}{\varepsilon}\frac{\nu}{4}\int_{\TT }   \mathcal{P}^{1/2}\Lambda^{s}(f_{x}^{2}) \mathcal{P}^{1/2} \Lambda^{s}f_{x} \ dx \\
  & \lesssim \norm{\mathcal{P}^{1/2}\Lambda^{s}(f_{x}^{2})}_{L^{2}}\norm{\mathcal{P}^{1/2}\Lambda^{s}f_{x}}_{L^{2}}\lesssim \norm{\Lambda^{s}f}_{L^{2}}^{3}.
\end{align*}
and
\begin{align*}
 J_{24}=\kappa \int_{\TT } f^{2} \Lambda^{2s} \mathcal{P} f_{x} \ dx&=\kappa \int_{\TT } \Lambda^{s} (f^{2}) \Lambda^{s} \mathcal{P} f_{x} \ dx \\
  & \lesssim \norm{\Lambda^{s}(f^{2})}_{L^{2}}\norm{\mathcal{P}\Lambda^{s}f_{x}}_{L^{2}}\lesssim \norm{\Lambda^{s}f}_{L^{2}}^{3}.
\end{align*}
The most singular term is $J_{23}$. Integrating twice by parts, we find that
\begin{align*}
J_{23}=-\frac{3\nu}{8}\int_{\TT} f_{x}f_{x} \Lambda^{2s}\mathcal{P}f_{x}\ dx- \frac{\nu}{4}\int_{\TT} ff_{x} \Lambda^{2s}\mathcal{P}f_{xx} \ dx
\end{align*}
The first term is identical to $J_{22}$ and hence
\[ \abs{\frac{3\nu}{8}\int_{\TT} f_{x}f_{x} \Lambda^{2s}\mathcal{P}f_{x}\ dx}\lesssim  \norm{\Lambda^{s}f}_{L^{2}}^{3}.\]
To bound the second term, we use identity \eqref{twoder:helm}  to write
\[  -\frac{\nu}{4}\int_{\TT} ff_{x} \Lambda^{2s}\mathcal{P}f_{xx} \ dx= -\frac{1}{2}\int_{\TT} ff_{x} \Lambda^{2s}f \ dx+ \frac{\kappa}{2}\int_{\TT} ff_{x} \Lambda^{2s}\mathcal{P}f \ dx,\]
where the latter integral is after integration by parts identical to $J_{24}$. To estimate the former, we use the Kato-Ponce commutator estimate \eqref{KPcom}  and Sobolev embedding \eqref{Sob:e}, namely
\begin{align*}
-\frac{1}{2}\int_{\TT} ff_{x} \Lambda^{2s}f \ dx&=\frac{1}{4}\int_{\TT} f_{x} |\Lambda^{s}f |^{2} \ dx-\frac{1}{2}\int_{\TT} [\Lambda^{s},f]f_{x}  \Lambda^{s}f \ dx \\
&\lesssim \norm{f_{x}}_{L^{\infty}} \norm{\Lambda^{s}f}^{2}_{L^2}+\norm{ [\Lambda^{s},f]f_{x}}_{L^2}\norm{\Lambda^{s}f}_{L^2} \\
&\lesssim \norm{f_{x}}_{L^{\infty}}\norm{\Lambda^{s}f}^{2}_{L^2} \lesssim \norm{\Lambda^{s}f}^{3}_{L^2}.
\end{align*}
and hence
\[ \abs{J_{23}}\lesssim \norm{\Lambda^{s}f}^{3}_{L^2}.\]
Combining the previous bounds we conclude that
\begin{equation}\label{estimate:J2}
\abs{J_{2}} =\abs{ J_{21}+J_{22}+J_{23}+J_{24}+ \mbox{l.o.t}} \lesssim  \norm{\Lambda^{s}f}^{3}_{L^2}.
\end{equation}
Thus, \eqref{estimate:J1}-\eqref{estimate:J2} yields
\begin{equation}\label{estimate:Hs}
 \frac{d}{dt}\norm{\Lambda^{s}f}_{L^{2}}^{2}\lesssim \left(\norm{\Lambda^{s}f}^{2}_{L^2}+\norm{\Lambda^{s}f}^{3}_{L^2}
\right)\lesssim \left( \mathcal{E}(t)+ \mathcal{E}^{3/2}(t)\right)
 \end{equation}
Both bounds \eqref{estimate:L2} and \eqref{estimate:Hs} lead to the following differential inequality
\begin{equation}\label{energy:ine}
\frac{d}{dt}\mathcal{E}(t)\le C\big(\mathcal{E}(t)+\mathcal{E}(t)^{3/2}\big).
\end{equation}
where $C=C(\kappa,\varepsilon,\beta,\nu)$ is a positive constant.  With the previous differential inequality at hand, one can show a uniform time of existence. Indeed, let $\overline{c}>0$ be such that $\mathcal{E}(0)\leq \overline{c}$. We want calculate for which values of $t>0$ we may guarantee that $\mathcal{E}(t)\leq 2\overline{c}$ and hence for such values we have that
\[ \mathcal{E}(t)\leq C \left(2\overline{c}+(2\overline{c})^{3/2} \right)t + \overline{c}.\]
Therefore, we conclude that $\mathcal{E}(t)\leq 2\overline{c}$ for all $t$ satisfying
\[ t\in \bigg[0, \frac{\overline{c}}{C \left(2\overline{c}+(2\overline{c})^{3/2} \right)}\bigg]. \]
\subsubsection*{\underline{Step 2: the approximate system, uniform estimates, and passage to the limit}}

Fix $\delta>0$ and assume $s>\frac52+\delta$. Let $\mathcal{J}^\epsilon$ be the periodic heat-kernel mollifier, i.e.
\[
\widehat{\mathcal{J}^\epsilon f}(k)=e^{-\epsilon |k|^2}\,\widehat{f}(k),\qquad k\in\mathbb{Z}.
\]
Then $\mathcal{J}^\epsilon$ is a self-adjoint Fourier multiplier, bounded on $H^r(\TT)$ for every $r\in\mathbb{R}$, it commutes
with $\Lambda^s$, $\mathcal{P}$ and $\mathcal{M}$, and it satisfies
\begin{equation}\label{eq:J-properties}
	\|\mathcal{J}^\epsilon f\|_{H^r}\le \|f\|_{H^r},
	\qquad
	\mathcal{J}^\epsilon f\to f \ \text{in }H^r \ \text{as }\epsilon\to 0^+ \ \text{for all }r\in\mathbb{R}.
\end{equation}

We consider the mollified Cauchy problem
\begin{align}\label{eq:mollified-system-step2}
	f^\epsilon_t
	&=\mathcal{J}^{\epsilon}\mathcal{M}\mathcal{P}\bigg[
	- \frac{1}{\varepsilon}\Big(1-\frac{\beta}{2}\Big)\mathcal{J}^{\epsilon} f^{\epsilon}_{xx}
	+ \frac{\kappa}{\varepsilon}\mathcal{J}^{\epsilon} f^{\epsilon}_{x}
	- \frac{\nu}{2\varepsilon}\mathcal{J}^{\epsilon}f^{\epsilon}_{xxx}
	+ \Big(2+\frac{\beta}{4}\Big)\big(\mathcal{J}^{\epsilon}f^{\epsilon}\,\mathcal{J}^{\epsilon}f^{\epsilon}_{x}\big)_{x}
	\nonumber\\
	&\hspace{2.6cm}
	+ \frac{\nu}{4\varepsilon}\,\mathcal{J}^{\epsilon}f^{\epsilon}_{x}\,\mathcal{J}^{\epsilon}f^{\epsilon}_{xx}
	- \frac{\nu}{4}\,\mathcal{J}^{\epsilon}f^{\epsilon}\,\mathcal{J}^{\epsilon}f^{\epsilon}_{xxx}
	-2\kappa\,\mathcal{J}^{\epsilon} f^{\epsilon}\,\mathcal{J}^{\epsilon} f^{\epsilon}_{x}
	\bigg],
\end{align}
with $f^\epsilon(\cdot,0)=f_0\in H^s(\TT)$.
For each fixed $\epsilon>0$, the right-hand side is a locally Lipschitz map $H^s(\TT)\to H^s(\TT)$; hence, by Picard's theorem,
there exists $T_\epsilon>0$ and a unique solution $f^\epsilon\in C^1([0,T_\epsilon];H^s(\TT))$. \medskip

Repeating verbatim the energy estimates of Step~1 (using that $\mathcal{J}^\epsilon$ is self-adjoint and commutes with
$\Lambda^s$, $\mathcal{P}$ and $\mathcal{M}$), we obtain the differential inequality
\begin{equation}\label{eq:energy-eps}
	\frac{d}{dt}\widetilde{\mathcal{E}}^\epsilon(t)
	\le C\Big(\widetilde{\mathcal{E}}^\epsilon(t)+\big(\widetilde{\mathcal{E}}^\epsilon(t)\big)^{3/2}\Big),
	\qquad
	\widetilde{\mathcal{E}}^\epsilon(t):=\|f^\epsilon(t)\|_{L^2}^2+\|\Lambda^s f^\epsilon(t)\|_{L^2}^2,
\end{equation}
with $C=C(\varepsilon,\kappa,\beta,\nu)$ independent of $\epsilon$. Consequently, there exists $T>0$, depending only on
$\|f_0\|_{H^s}$ and the parameters, such that each $f^\epsilon$ exists on $[0,T]$ and satisfies the uniform bound
\begin{equation}\label{eq:uniform-Hs-bound}
	\sup_{t\in[0,T]}\|f^\epsilon(t)\|_{H^s}\le C_0,
\end{equation}
for some $C_0$ independent of $\epsilon$.

We next control the time derivative uniformly. Let $\mathcal{F}^\epsilon(f^\epsilon)$ denote the bracket in
\eqref{eq:mollified-system-step2}, so that
\begin{equation}\label{eq:ft-form}
	f^\epsilon_t=\mathcal{J}^\epsilon\mathcal{M}\mathcal{P}\big(\mathcal{F}^\epsilon(f^\epsilon)\big).
\end{equation}
We claim that $\mathcal{F}^\epsilon(f^\epsilon)$ is uniformly bounded in $L^\infty(0,T;H^{s-3})$. Indeed, by
\eqref{eq:J-properties} and \eqref{eq:uniform-Hs-bound},
\[
\|\mathcal{J}^\epsilon f^\epsilon_{xx}\|_{H^{s-3}}+\|\mathcal{J}^\epsilon f^\epsilon_x\|_{H^{s-3}}
+\|\mathcal{J}^\epsilon f^\epsilon_{xxx}\|_{H^{s-3}}
\lesssim \|f^\epsilon\|_{H^s}\le C_0.
\]
Moreover, since $s>\frac52+\delta$, we have $H^{s-1}(\TT)\hookrightarrow W^{1,\infty}(\TT)$, hence
\[
\|\mathcal{J}^\epsilon f^\epsilon\|_{L^\infty}+\|\mathcal{J}^\epsilon f^\epsilon_x\|_{L^\infty}
+\|\mathcal{J}^\epsilon f^\epsilon_{xx}\|_{L^\infty}
\lesssim \|f^\epsilon\|_{H^s}\le C_0.
\]
Since multiplication by an $L^\infty$ function is bounded on $H^{r}(\TT)$ for any $r\in\mathbb{R}$, we obtain for example
\[
\|\mathcal{J}^\epsilon f^\epsilon\,\mathcal{J}^\epsilon f^\epsilon_{xxx}\|_{H^{s-3}}
\lesssim \|\mathcal{J}^\epsilon f^\epsilon\|_{L^\infty}\,\|\mathcal{J}^\epsilon f^\epsilon_{xxx}\|_{H^{s-3}}
\lesssim \|f^\epsilon\|_{H^s}^2\le C_0^2,
\]
and similarly for the other nonlinearities. Hence
\begin{equation}\label{eq:F-uniform}
	\sup_{t\in[0,T]}\|\mathcal{F}^\epsilon(f^\epsilon(t))\|_{H^{s-3}}\le C,
\end{equation}
with $C$ independent of $\epsilon$. Since $\mathcal{P}$ is of order $-2$ and $\mathcal{M}$ and $\mathcal{J}^\epsilon$ are of order $0$,
\eqref{eq:ft-form} and \eqref{eq:F-uniform} yield
\begin{equation}\label{eq:dt-uniform}
	\sup_{t\in[0,T]}\|f^\epsilon_t(t)\|_{H^{s-1}}\le C,
\end{equation}
with $C$ independent of $\epsilon$. In particular, $\{f^\epsilon\}_{\epsilon>0}$ is equicontinuous in time with values in $H^{s-1}$.

Choose $s'$ such that $\max\{\frac52,\,s-1\}<s'<s$. By the compact embedding $H^s(\TT)\hookrightarrow\hookrightarrow H^{s'}(\TT)$,
the continuous embedding $H^{s'}(\TT)\hookrightarrow H^{s-1}(\TT)$, and the uniform bounds
\eqref{eq:uniform-Hs-bound}--\eqref{eq:dt-uniform}, the Arzel\`a--Ascoli theorem (equivalently, the Aubin--Lions lemma) yields a subsequence
(not relabeled) and a limit $f$ such that
\begin{equation}\label{eq:strong-conv}
	f^\epsilon\to f \quad\text{in } C([0,T];H^{s'}(\TT)).
\end{equation}
Moreover, by Banach--Alaoglu,
\begin{equation}\label{eq:weakstar-conv}
	f^\epsilon \rightharpoonup^\ast f \quad\text{in } L^\infty(0,T;H^s(\TT)).
\end{equation}
In particular, $f\in L^\infty(0,T;H^s(\TT))$ and $f(\cdot,t)\in H^s(\TT)$ for all $t\in[0,T]$.

We now pass to the limit in \eqref{eq:mollified-system-step2}. Let $\mathcal{F}(f)$ denote the non-mollified bracket in
\eqref{asymptotic:model:contr}, namely
\[
\mathcal{F}(f):=
- \frac{1}{\varepsilon}\Big(1-\frac{\beta}{2}\Big) f_{xx}
+ \frac{\kappa}{\varepsilon} f_x
- \frac{\nu}{2\varepsilon} f_{xxx}
+ \Big(2+\frac{\beta}{4}\Big)(f f_x)_x
+ \frac{\nu}{4\varepsilon} f_x f_{xx}
- \frac{\nu}{4} f f_{xxx}
- 2\kappa f f_x .
\]
Fix any $s'\in\big(\frac52,\,s\big)$. From \eqref{eq:strong-conv} and \eqref{eq:J-properties} we have
\[
\mathcal{J}^\epsilon f^\epsilon \to f
\quad \text{in } C([0,T];H^{s'}(\TT)).
\]
Since $\mathcal{J}^\epsilon$ commutes with $\partial_x$, by continuity of $\partial_x:H^r\to H^{r-1}$ we also obtain
\[
\mathcal{J}^\epsilon f^\epsilon_x \to f_x \quad \text{in } C([0,T];H^{s'-1}(\TT)),\qquad
\mathcal{J}^\epsilon f^\epsilon_{xx} \to f_{xx} \quad \text{in } C([0,T];H^{s'-2}(\TT)),
\]
and
\[
\mathcal{J}^\epsilon f^\epsilon_{xxx} \to f_{xxx} \quad \text{in } C([0,T];H^{s'-3}(\TT)).
\]
Moreover, since $s'>\frac52$, we have $H^{s'-1}(\TT)\hookrightarrow L^\infty(\TT)$, hence
\[
\sup_{t\in[0,T]}\|\mathcal{J}^\epsilon f^\epsilon(t)\|_{L^\infty}
+\sup_{t\in[0,T]}\|f(t)\|_{L^\infty}
\;<\;\infty .
\]
We can therefore pass to the limit in each nonlinear product in the space $H^{s'-3}(\TT)$.
Indeed, we use the standard fact that multiplication by an $L^\infty$ function acts continuously on $H^r(\TT)$ for any
$r\in\mathbb{R}$, i.e.
\[
\|uv\|_{H^r}\lesssim \|u\|_{L^\infty}\|v\|_{H^r}\qquad (u\in L^\infty,\ v\in H^r).
\]
For instance,
\[
\|\mathcal{J}^\epsilon f^\epsilon\,\mathcal{J}^\epsilon f^\epsilon_{xxx}- f\,f_{xxx}\|_{H^{s'-3}}
\le
\|(\mathcal{J}^\epsilon f^\epsilon-f)\,\mathcal{J}^\epsilon f^\epsilon_{xxx}\|_{H^{s'-3}}
+\|f\,(\mathcal{J}^\epsilon f^\epsilon_{xxx}-f_{xxx})\|_{H^{s'-3}},
\]
and each term tends to zero in $C([0,T];H^{s'-3})$ because
$\mathcal{J}^\epsilon f^\epsilon-f\to0$ in $C([0,T];H^{s'})\hookrightarrow C([0,T];L^\infty)$ and
$\mathcal{J}^\epsilon f^\epsilon_{xxx}\to f_{xxx}$ in $C([0,T];H^{s'-3})$.
Similarly,
\[
\big(\mathcal{J}^\epsilon f^\epsilon\,\mathcal{J}^\epsilon f^\epsilon_x\big)_x \to (f f_x)_x
\quad\text{in } C([0,T];H^{s'-3}(\TT)),
\]
and the remaining nonlinearities are treated in the same way. Consequently,
\begin{equation}\label{eq:F-conv}
	\mathcal{F}^\epsilon(f^\epsilon)\to \mathcal{F}(f)
	\quad\text{in } C([0,T];H^{s'-3}(\TT)).
\end{equation}
Applying the Fourier multipliers $\mathcal{P}$ and $\mathcal{M}$, and using that $\mathcal{P}$ is of order $-2$, we infer
\[
\mathcal{M}\mathcal{P}\big(\mathcal{F}^\epsilon(f^\epsilon)\big)\to \mathcal{M}\mathcal{P}\big(\mathcal{F}(f)\big)
\quad\text{in } C([0,T];H^{s'-1}(\TT)).
\]
Since $\mathcal{J}^\epsilon\to \Id$ strongly on $H^{s'-1}(\TT)$, it follows that
\[
\mathcal{J}^\epsilon\mathcal{M}\mathcal{P}\big(\mathcal{F}^\epsilon(f^\epsilon)\big)\to
\mathcal{M}\mathcal{P}\big(\mathcal{F}(f)\big)
\quad\text{in } C([0,T];H^{s'-1}(\TT)).
\]
Writing \eqref{eq:mollified-system-step2} in integral form,
\[
f^\epsilon(t)=f_0+\int_0^t \mathcal{J}^\epsilon\mathcal{M}\mathcal{P}\big(\mathcal{F}^\epsilon(f^\epsilon(r))\big)\,dr,
\]
we may pass to the limit as $\epsilon\to 0^+$ to obtain
\[
f(t)=f_0+\int_0^t \mathcal{M}\mathcal{P}\big(\mathcal{F}(f(r))\big)\,dr
\quad\text{in } H^{s'-1}(\TT).
\]
In particular, $f\in C^1([0,T];H^{s'-1}(\TT))$ and $f$ satisfies \eqref{asymptotic:model:contr} in $H^{s'-1}(\TT)$
for all $t\in[0,T]$.
Finally, since $f\in L^\infty(0,T;H^s)$ and $f_t=\mathcal{M}\mathcal{P}\big(\mathcal{F}(f)\big)\in L^\infty(0,T;H^{s-1})$,
we have $f\in W^{1,\infty}(0,T;H^{s-1})\subset C([0,T];H^{s-1})$. The upgrade from
$f\in L^\infty(0,T;H^s)\cap C([0,T];H^{s-1})$ to $f\in C([0,T];H^s)$ follows by a standard argument:
one first obtains weak continuity $f\in C_w([0,T];H^s)$ and then proves continuity of $t\mapsto \|f(t)\|_{H^s}$, see \cite{MajdaBertozzi2002}.

\subsubsection*{\underline{Step 3: Uniqueness of solutions}}

Let $f,g\in C([0,T];H^s(\TT))$ with $s>\frac52$ be two strong solutions of \eqref{asymptotic:model:contr}
with the same initial datum $f_0$. Set $w=f-g$. Since $\mathcal{M}$ and $\mathcal{P}$ commute, $w$ satisfies
\begin{equation}\label{eq:w-eq}
	w_t = \mathcal{M}\mathcal{P}\big(\mathcal{F}(f)-\mathcal{F}(g)\big),
\end{equation}
where
\[
\mathcal{F}(f):=
- \frac{1}{\varepsilon}\Big(1-\frac{\beta}{2}\Big) f_{xx}
+ \frac{\kappa}{\varepsilon} f_x
- \frac{\nu}{2\varepsilon} f_{xxx}
+ \Big(2+\frac{\beta}{4}\Big)(f f_x)_x
+ \frac{\nu}{4\varepsilon} f_x f_{xx}
- \frac{\nu}{4} f f_{xxx}
- 2\kappa f f_x.
\]
Since $\mathcal{M}$ and $\mathcal{P}$ are Fourier multipliers, we may equivalently apply $\mathcal{P}^{-1}$ to both sides,
obtaining
\begin{equation}\label{eq:w-Pinv}
	\mathcal{P}^{-1} w_t = \mathcal{M}\big(\mathcal{F}(f)-\mathcal{F}(g)\big),
	\qquad \mathcal{P}^{-1}=\kappa-\frac{\nu}{2}\partial_{xx}.
\end{equation}
Testing \eqref{eq:w-Pinv} with $w$ in $L^2(\TT)$ and using the self-adjointness of $\mathcal{P}^{-1}$ yields
\[
\frac12\frac{d}{dt}\langle \mathcal{P}^{-1}w,w\rangle
= \langle \mathcal{M}(\mathcal{F}(f)-\mathcal{F}(g)),\,w\rangle.
\]
Defining the energy
\[
E_{w}(t):=\kappa\|w\|_{L^2}^2+\frac{\nu}{2}\|w_x\|_{L^2}^2,
\]
we have $E_w(t)\simeq \|w(t)\|_{H^1}^2$. Since $\mathcal{M}=\Id+\mathcal{S}$ with $\mathcal{S}$ a smoothing operator of order $-1$,
we may absorb its contribution into constants and focus on the $\Id$ part:
\[
\frac{d}{dt}E_w(t)
\;\lesssim\;
\big|\langle \mathcal{F}(f)-\mathcal{F}(g),\,w\rangle\big|.
\]
For the linear part of $\mathcal{F}$, one has
\[
\mathcal{F}_{\mathrm{lin}}(f)-\mathcal{F}_{\mathrm{lin}}(g)
= -\frac{1}{\varepsilon}\Big(1-\frac{\beta}{2}\Big)w_{xx}
+\frac{\kappa}{\varepsilon}w_x
-\frac{\nu}{2\varepsilon}w_{xxx}.
\]
Hence, integrating by parts and using periodicity,
\[
\langle \mathcal{F}_{\mathrm{lin}}(f)-\mathcal{F}_{\mathrm{lin}}(g),w\rangle
= \frac{1}{\varepsilon}\Big(1-\frac{\beta}{2}\Big)\|w_x\|_{L^2}^2
\]
so that
\[
\big|\langle \mathcal{F}_{\mathrm{lin}}(f)-\mathcal{F}_{\mathrm{lin}}(g),w\rangle\big|
\;\lesssim\;
\|w\|_{H^1}^2,
\]

We now write the nonlinear part of the bracket as
\[
\mathcal{F}_{\mathrm{nl}}(h)
:=\Big(2+\frac{\beta}{4}\Big)(h h_x)_x
+\frac{\nu}{4\varepsilon}h_x h_{xx}
+\frac{\nu}{4}h\,h_{xxx}
-2\kappa h h_x ,
\]
so that
\[
\mathcal{F}_{\mathrm{nl}}(f)-\mathcal{F}_{\mathrm{nl}}(g)
=\Big(2+\frac{\beta}{4}\Big)M_1+\frac{\nu}{4\varepsilon}M_2+\frac{\nu}{4}M_3-2\kappa M_4,
\]
where, with $w:=f-g$,
\[
M_1:=(f f_x)_x-(g g_x)_x,\qquad
M_2:=f_x f_{xx}-g_x g_{xx},\qquad
M_3:=f f_{xxx}-g g_{xxx},\qquad
M_4:=f f_x-g g_x.
\]
Set $K(t):=\|f(t)\|_{H^s}+\|g(t)\|_{H^s}$; since $s>\frac52$, we have the embeddings
$H^s(\TT)\hookrightarrow W^{2,\infty}(\TT)$ and therefore $\|f\|_{W^{2,\infty}}+\|g\|_{W^{2,\infty}}\lesssim K(t)$.

\medskip
We start with $M_1$. Testing against $w$ and integrating by parts gives
\[
\langle M_1,w\rangle
=\int_\TT (f_x+g_x)w_x w\,dx+\int_\TT (f+g)w_{xx}w\,dx
=\frac12\int_\TT (f_x+g_x)w^2\,dx-\int_\TT (f+g)w_x^2\,dx,
\]
and hence, by Hölder,
\[
|\langle M_1,w\rangle|
\lesssim \big(\|f_x\|_{L^\infty}+\|g_x\|_{L^\infty}+\|f\|_{L^\infty}+\|g\|_{L^\infty}\big)\,\|w\|_{H^1}^2
\lesssim K(t)\,\|w\|_{H^1}^2.
\]

\medskip
For $M_2$ we write
\[
M_2=f_x f_{xx}-g_x g_{xx}=f_x w_{xx}+g_{xx}w_x.
\]
Then, integrating by parts once in the first term,
\[
\langle f_x w_{xx},w\rangle
=-\int_\TT f_{xx}w_x w\,dx-\int_\TT f_x w_x^2\,dx,
\]
so
\[
|\langle f_x w_{xx},w\rangle|
\lesssim \big(\|f_{xx}\|_{L^\infty}+\|f_x\|_{L^\infty}\big)\,\|w\|_{H^1}^2.
\]
For the second term we have that
\[
|\langle g_{xx}w_x,w\rangle|
\le \|g_{xx}\|_{L^\infty}\,\|w_x\|_{L^2}\,\|w\|_{L^2}
\lesssim \|g_{xx}\|_{L^\infty}\,\|w\|_{H^1}^2.
\]
Hence
\[
|\langle M_2,w\rangle|\lesssim \big(\|f\|_{W^{2,\infty}}+\|g\|_{W^{2,\infty}}\big)\,\|w\|_{H^1}^2
\lesssim K(t)\,\|w\|_{H^1}^2.
\]

\medskip
For $M_3$ we decompose
\[
M_3=f f_{xxx}-g g_{xxx}=f\,w_{xxx}+w\,g_{xxx}.
\]
For the term $\langle w\,g_{xxx},w\rangle$ we integrate by parts once to avoid $g_{xxx}$:
\[
\langle w\,g_{xxx},w\rangle=\int_\TT g_{xxx}w^2\,dx
=-2\int_\TT g_{xx} w w_x\,dx,
\]
and therefore
\[
|\langle w\,g_{xxx},w\rangle|
\le 2\|g_{xx}\|_{L^\infty}\,\|w\|_{L^2}\,\|w_x\|_{L^2}
\lesssim \|g_{xx}\|_{L^\infty}\,\|w\|_{H^1}^2
\lesssim K(t)\,\|w\|_{H^1}^2.
\]
For $\langle f\,w_{xxx},w\rangle$ we integrate by parts as before:
\[
\int_\TT f\,w_{xxx}w\,dx
=-\int_\TT f_x w_{xx}w\,dx-\int_\TT f w_{xx}w_x\,dx
=\frac12\int_\TT f_{xx}w_x^2\,dx-\int_\TT f_x w_{xx}w\,dx,
\]
and integrating by parts once more in the last term,
\[
-\int_\TT f_x w_{xx}w\,dx
=\int_\TT f_{xx}w_x w\,dx+\int_\TT f_x w_x^2\,dx.
\]
Thus
\[
|\langle f\,w_{xxx},w\rangle|
\lesssim \big(\|f_x\|_{L^\infty}+\|f_{xx}\|_{L^\infty}\big)\,\|w\|_{H^1}^2
\lesssim K(t)\,\|w\|_{H^1}^2,
\]
and altogether $|\langle M_3,w\rangle|\lesssim K(t)\|w\|_{H^1}^2$.

\medskip
Finally, for $M_4$ we have
\[
M_4=f f_x-g g_x=f w_x+w g_x,
\]
hence
\[
\langle f w_x,w\rangle=-\frac12\int_\TT f_x w^2\,dx,
\qquad
\langle w g_x,w\rangle=\int_\TT g_x w^2\,dx,
\]
and thus
\[
|\langle M_4,w\rangle|
\lesssim \big(\|f_x\|_{L^\infty}+\|g_x\|_{L^\infty}\big)\,\|w\|_{L^2}^2
\lesssim K(t)\,\|w\|_{H^1}^2.
\]

\medskip
Collecting the previous bounds we obtain
\[
\big|\langle \mathcal{F}_{\mathrm{nl}}(f)-\mathcal{F}_{\mathrm{nl}}(g),\,w\rangle\big|
\lesssim K(t)\,\|w\|_{H^1}^2.
\]
Together with the corresponding linear estimate, this yields
\[
\frac{d}{dt}E_w(t)\le C\,K(t)\,E_w(t),
\qquad E_w(t)\simeq \|w(t)\|_{H^1}^2,
\]
and since $w(0)=0$, Gr\"onwall's inequality implies $E_w(t)\equiv 0$ on $[0,T]$. Hence $f\equiv g$ and the solution is unique.

\end{proof}

\noindent

Next we establish a continuation/breakdown criterion for the strong solution.  Since most of the argument reuses the energy estimates from the local existence theorem, we only provide a sketch of the proof and highlight the most relevant changes.

\begin{theorem}\label{th:continuation}
	Let $s>\frac{5}{2}$ and let $f_0\in H^s(\TT)$ be mean-zero initial data for \eqref{asymptotic:model:contr}.
	Let $f\in C([0,T);H^s(\TT))$ be the (unique) strong solution given by Theorem \ref{th:existence:strong} on its
	maximal interval of existence $[0,T)$ (so that $T\in(0,\infty]$ is maximal in $H^s$).
	Then the following hold:
	\begin{enumerate}
		\item[(i)] (\emph{Continuation}) If for some $0<T^\sharp<T$ we have
		\begin{equation}\label{crit:intLip}
			\int_0^{T^\sharp}\|f_x(t)\|_{L^\infty(\TT)}\,dt<\infty,
		\end{equation}
		then $\sup_{t\in[0,T^\sharp]}\|f(t)\|_{H^s(\TT)}<\infty$, and in particular the solution extends
		beyond $T^\sharp$ as a strong solution in $H^s(\TT)$.
		
		\item[(ii)] (\emph{Breakdown}) If $T<\infty$, then necessarily
		\begin{equation}\label{crit:breakdown}
			\int_0^{T}\|f_x(t)\|_{L^\infty(\TT)}\,dt=+\infty.
		\end{equation}
	\end{enumerate}
\end{theorem}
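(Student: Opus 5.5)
We will revisit the $H^s$ energy estimates from Step 1 of the proof of Theorem~\ref{th:existence:strong} and show that every cubic contribution can be bounded by $\|f_x\|_{L^\infty}\mathcal E(t)$ plus terms that are linear in $\mathcal E$. The goal is a refined energy inequality of the form
\[
\frac{d}{dt}\mathcal E(t)\le C\big(1+\|f\|_{L^\infty}+\|f_x\|_{L^\infty}\big)\,\mathcal E(t),
\]
with $C$ depending only on the parameters, and not the cubic inequality \eqref{energy:ine}.

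\textbf{Step 1: the low norms.} The zero mode is conserved by \eqref{mean:prop}, so $f$ stays mean-zero. Hence $\|f\|_{L^\infty}\lesssim\|f_x\|_{L^1}\lesssim\|f_x\|_{L^\infty}$, and the $L^\infty$ norm of $f$ is already controlled by the criterion quantity.

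\textbf{Step 2: the $L^2$ estimate.} We exploit that $\mathcal P$ gains two derivatives. In $I_2$ every nonlinearity can be written as a derivative of a quadratic expression, for example $(f^2)_x$, $(f_x^2)_x$ and $(ff_{xx})_x-(f_x^2/2)_x$. Applying $\mathcal P\partial_x$, which is bounded of order $-1$, and using Hölder with one factor in $L^\infty$ gives $|I_2|\lesssim(\|f\|_{L^\infty}+\|f_x\|_{L^\infty})\|f\|_{H^1}\|f\|_{L^2}$.

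\textbf{Step 3: the $\dot H^s$ estimate.} The estimate of $J_1$ is already linear and is unchanged. In $J_{21},J_{22},J_{24}$ and in the reduced form of $J_{23}$ we replace the algebra bound $\|\Lambda^s(f^2)\|_{L^2}\lesssim\|\Lambda^s f\|^2$ by the Kato–Ponce tame product estimate
\[
\|\Lambda^s(uv)\|_{L^2}\lesssim\|u\|_{L^\infty}\|\Lambda^s v\|_{L^2}+\|v\|_{L^\infty}\|\Lambda^s u\|_{L^2}.
\]
This gives:
\begin{itemize}
\item[$\circ$] for $J_{21}$ and $J_{24}$, a bound $\|f\|_{L^\infty}\|\Lambda^s f\|^2$;
\item[$\circ$] for $J_{22}$, the factor $\mathcal P^{1/2}$ absorbs one derivative, so we need $\|\Lambda^{s-1}(f_x^2)\|_{L^2}\lesssim\|f_x\|_{L^\infty}\|\Lambda^s f\|_{L^2}$;
\item[$\circ$] for the top-order piece $\int ff_x\Lambda^{2s}f$, the commutator estimate \eqref{KPcom} already gives $\|f_x\|_{L^\infty}\|\Lambda^s f\|^2$, as computed in the paper.
\end{itemize}
The $\mathcal S$-lower-order terms are handled the same way, since they carry even more smoothing. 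Combining with Step 1 yields the refined inequality.

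\textbf{Step 4: Grönwall and the breakdown statement.} Grönwall's inequality gives
\[
\mathcal E(t)\le\mathcal E(0)\exp\Big(C\int_0^t(1+\|f_x\|_{L^\infty})\,d\tau\Big),
\]
which is finite on $[0,T^\sharp]$ under \eqref{crit:intLip}. The local existence time in Theorem~\ref{th:existence:strong} depends only on the $H^s$ norm, so restarting from a time close to $T^\sharp$ extends the solution beyond $T^\sharp$. This proves (i).

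For (ii), suppose $T<\infty$ and the integral in \eqref{crit:breakdown} were finite. The same bound would give $\sup_{[0,T)}\|f\|_{H^s}<\infty$. Restarting at $T-\delta$, with $\delta$ smaller than the uniform local existence time, would then contradict maximality of $T$.

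\textbf{Main obstacle.} The hardest point is the tame control of $J_{22}$ and of the $f_xf_{xx}$ part of $J_{23}$, because $f_{xx}$ appears and is not controlled by $\|f_x\|_{L^\infty}$. I will first move the $\partial_x$ onto $\mathcal P$ so that the relevant quantity is $\|\mathcal P\partial_x\Lambda^s(f_x^2)\|$, which is dominated by $\|\Lambda^{s-1}(f_x^2)\|_{L^2}$. The tame product estimate then yields $\|f_x\|_{L^\infty}\|\Lambda^{s}f\|_{L^2}$ with no $f_{xx}$ in $L^\infty$. If this fails for some term, the fallback is a log-Sobolev (Brezis–Gallouet type) bound on the offending $L^\infty$ factor, which still closes via Osgood's lemma.
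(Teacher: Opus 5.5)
Your proposal is correct and follows essentially the paper's route. You refine the local-existence energy estimates with tame Kato--Ponce product and commutator bounds, using the conserved zero mean to get $\|f\|_{L^\infty}\lesssim\|f_x\|_{L^\infty}$, which gives $\frac{d}{dt}\mathcal{E}\lesssim(1+\|f_x\|_{L^\infty})\,\mathcal{E}$, and then conclude with Gr\"onwall and a restart. Your argument for (ii), bounding $\|f\|_{H^s}$ on all of $[0,T)$ and restarting at $T-\delta$ with $\delta$ below the uniform local existence time, actually contradicts maximality, whereas the paper's appeal to (i) only extends the solution past some $T^\sharp<T$, which contradicts nothing.
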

\begin{proof}[Proof of Theorem \ref{th:continuation}]
We keep the notation of the proof of Theorem \ref{th:existence:strong}. In particular,
$\mathcal{P},\mathcal{M}$ are as in \eqref{opP}--\eqref{multiplierP-multiplierM}, $\mathcal{M}=\Id+\mathcal{S}$,
and we use the energy
\[
\mathcal{E}(t):=\|f(t)\|_{L^2(\TT)}^{2}+\|\Lambda^s f(t)\|_{L^2(\TT)}^{2}\sim \|f(t)\|_{H^s(\TT)}^{2}.
\]
We also recall that the mean is conserved (Step 1 of Theorem \ref{th:existence:strong}), hence $f$ remains mean-zero.

\medskip\noindent
\underline{\textit{Step 1: a refined a priori inequality.}}
We claim that for $s>\frac52$ the energy satisfies
\begin{equation}\label{energy:tame}
	\frac{d}{dt}\mathcal{E}(t)\le C_0\,\mathcal{E}(t)+C_1\,\|f_x(t)\|_{L^\infty(\TT)}\,\mathcal{E}(t),
\end{equation}
where $C_0,C_1>0$ depend only on the parameters of the model and on $s$.

\smallskip
Testing \eqref{asymptotic:model:contr2} against $f$ yields
$\frac12\frac{d}{dt}\|f\|_2^2=I_1+I_2$ with $I_1,I_2$ as in the proof of Theorem \ref{th:existence:strong}.
The linear term satisfies
\begin{equation}\label{cont:I1}
	|I_1|\lesssim \|f\|_2^2+\|f_x\|_2^2\lesssim \mathcal{E}(t),
\end{equation}
by Lemma \ref{lemma:op}. For $I_2$, we use $\mathcal{M}=\Id+\mathcal{S}$ and estimate as in the local existence
proof, keeping $\|f_x\|_{L^\infty}$ explicit (and using that $\mathcal{S}$ is lower order). This gives
\begin{equation}\label{cont:L2}
	\frac{d}{dt}\|f(t)\|_{2}^{2}\le C\,\mathcal{E}(t)+C\,\|f_x(t)\|_{L^\infty}\,\mathcal{E}(t).
\end{equation}

\smallskip
Applying $\Lambda^s$ to \eqref{asymptotic:model:contr2} and testing against $\Lambda^s f$
gives $\frac12\frac{d}{dt}\|\Lambda^s f\|_2^2=J_1+J_2$ with $J_1,J_2$ as in the proof of
Theorem \ref{th:existence:strong}. The linear contribution is unchanged:
\begin{equation}\label{cont:J1}
	|J_1|\lesssim \|\Lambda^s f\|_2^2\lesssim \mathcal{E}(t),
\end{equation}
by \eqref{estimate:J1}.

For $J_2$, decompose again $\mathcal{M}=\Id+\mathcal{S}$. The terms containing $\mathcal{S}$ are lower order and
are bounded by $C(1+\|f_x\|_\infty)\mathcal{E}(t)$ using Lemma \ref{lemma:op}. For the principal part (with
$\mathcal{M}$ replaced by $\Id$), one has
\begin{equation}\label{J21J22J24:tame}
|J_{21}|+|J_{22}|+|J_{24}|\;\lesssim\;\Big(1+\|f_x\|_{L^\infty(\TT)}\Big)\,\|\Lambda^s f\|_2^2.
\end{equation}
For the most singular term $J_{23}$, arguing as in Theorem \ref{th:existence:strong}, we write
\[
J_{23}=-\frac{3\nu}{8}\int_{\TT} f_{x}^{2}\,\Lambda^{2s}\mathcal{P}f_{x}\,dx
-\frac{\nu}{4}\int_{\TT} ff_{x}\,\Lambda^{2s}\mathcal{P}f_{xx}\,dx.
\]
The first integral is treated as in $J_{22}$. For the second one, we use the identity
$\mathcal{P}f_{xx}=\frac{\nu}{2}\partial_{xx}\mathcal{P}f=-f+\kappa \mathcal{P}f$ (Lemma \ref{lemma:op}) to obtain
\[
-\frac{\nu}{4}\int_{\TT} ff_x\,\Lambda^{2s}\mathcal{P}f_{xx}\,dx
= \frac12\int_{\TT} ff_x\,\Lambda^{2s}f\,dx
-\frac{\kappa}{2}\int_{\TT} ff_x\,\Lambda^{2s}\mathcal{P}f\,dx.
\]
The last term is handled as $J_{24}$. For the first one we use
\[
\int_{\TT} ff_x\,\Lambda^{2s}f\,dx
=-\frac12\int_{\TT} f_x|\Lambda^s f|^2\,dx
+\int_{\TT} [\Lambda^s,f]f_x\,\Lambda^s f\,dx,
\]
and thus, by Hölder and \eqref{KPcom},
\[
\Big|\int_{\TT} ff_x\,\Lambda^{2s}f\,dx\Big|
\lesssim \|f_x\|_{L^\infty}\|\Lambda^s f\|_2^2
+\|[\Lambda^s,f]f_x\|_2\,\|\Lambda^s f\|_2
\lesssim \|f_x\|_{L^\infty}\|\Lambda^s f\|_2^2.
\]
Collecting the previous bounds yields
\begin{equation}\label{J2:tame}
|J_2|\lesssim \Big(1+\|f_x(t)\|_{L^\infty(\TT)}\Big)\,\|\Lambda^s f(t)\|_2^2
\lesssim \Big(1+\|f_x(t)\|_{L^\infty(\TT)}\Big)\,\mathcal{E}(t).
\end{equation}
Combining \eqref{cont:J1} and \eqref{J2:tame}, we obtain
\begin{equation}\label{Hs:tame}
\frac{d}{dt}\|\Lambda^s f(t)\|_2^2
\le C\Big(1+\|f_x(t)\|_{L^\infty(\TT)}\Big)\,\mathcal{E}(t).
\end{equation}
Finally, adding \eqref{cont:L2} and \eqref{Hs:tame} gives \eqref{energy:tame}.

\medskip
Integrating \eqref{energy:tame} and applying Grönwall yields
\begin{equation}\label{gronwall:tame}
	\mathcal{E}(t)\le \mathcal{E}(0)\,
	\exp\!\Big(C_0 t + C_1\int_0^{t}\|f_x(\sigma)\|_{L^\infty(\TT)}\,d\sigma\Big),
	\qquad 0\le t<T.
\end{equation}
In particular, if \eqref{crit:intLip} holds for some $T^\sharp<T$, then
$\sup_{t\in[0,T^\sharp]}\|f(t)\|_{H^s(\TT)}<\infty$.

\medskip\noindent
\underline{\textit{Step 2: proving (i).}}
Fix $T^\sharp<T$ satisfying \eqref{crit:intLip}. Then \eqref{gronwall:tame} implies
$\|f(T^\sharp)\|_{H^s(\TT)}\le C^\sharp<\infty$.
Restarting Theorem \ref{th:existence:strong} at time $T^\sharp$ with datum $f(T^\sharp)$ yields an extension of $f$
beyond $T^\sharp$.

\medskip\noindent
\underline{\textit{Step 3: proving (ii).}}
If $T<\infty$ and \eqref{crit:breakdown} fails, then there exists $T^\sharp<T$ arbitrarily close to $T$ such that
\eqref{crit:intLip} holds. By (i) the solution extends beyond $T^\sharp$, contradicting the maximality of $T$.
Therefore \eqref{crit:breakdown} must hold.
\end{proof}

\section{Global existence and decay in the BBM regime}\label{sec:bbm-global}
In this section we establish the global existence classical solutions and their decay in time for the asymptotic model in the BBM-type regime (corresponding to the Benjamin–Bona–Mahony equation setting, $\nu=0$). In this case, the system becomes a regularized hyperbolic equation of dispersive-dissipative type.
\begin{theorem}\label{th:bbm-global}
	Assume $\nu=0$ and $\beta>-2$. Let $f_0\in H^2(\TT)$ be mean-zero and let
	$f\in C([0,T_{\max});H^2(\TT))$ be the maximal strong solution of \eqref{asymptotic:model:contr} with $f(0)=f_0$. Then there exists $\delta_0=\delta_0(\varepsilon,\kappa,\beta)>0$ such that if
	\[
	\|f_0\|_{H^2}\le \delta_0,
	\]
	the solution is global, $T_{\max}=+\infty$, and there exist constants $C,c>0$, depending only on
	$(\varepsilon,\kappa,\beta)$, such that
	\[
	\|f(t)\|_{H^2}\le C e^{-ct}\|f_0\|_{H^2}\qquad\text{for all }t\ge 0.
	\]
	In particular, $\|f(t)\|_{H^2}\to 0$ as $t\to\infty$.
\end{theorem}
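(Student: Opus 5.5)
Throughout, mean-zero means $\widehat f(k)=0$ for $k=0$, so only Fourier modes $k\neq0$ appear.

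The plan is a standard small-data bootstrap for an $H^2$ energy inequality of the form
\[
\frac{d}{dt}\mathcal E\le -2c\,\mathcal E + C\mathcal E^{3/2},
\]
where the linear part supplies uniform exponential damping on mean-zero functions.

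\emph{Step 1: the equation for $\nu=0$.} Setting $\nu=0$ in \eqref{final:eq:asympt-structured} and returning to $(x,t)$ gives the local BBM-type form
\[
(\kappa-2\partial_x)f_t=\frac1\varepsilon\Big[-\Big(1-\frac\beta2\Big)f_{xx}+\kappa f_x\Big]+\Big(2+\frac\beta4\Big)(ff_x)_x-2\kappa ff_x .
\]
Write $Q:=(\kappa-2\partial_x)^{-1}$, which has symbol $1/(\kappa-2ik)$ and is smoothing of order $-1$. Then $f_t=Lf+N(f)$, where $L=\varepsilon^{-1}Q[-(1-\beta/2)\partial_x^2+\kappa\partial_x]$ and $N(f)=Q[(2+\beta/4)(ff_x)_x-2\kappa ff_x]$.

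Local well-posedness in $H^2$ and the fact that the mean stays zero follow as in Theorem~\ref{th:existence:strong} and the Remark after it, since $s=2>\frac32$. By the analogue of Theorem~\ref{th:continuation}, it suffices to show that $\|f(t)\|_{H^2}$ stays bounded.

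\emph{Step 2: linear damping.} The symbol of $L$ is
\[
\ell(k)=\frac{(1-\frac\beta2)k^2+i\kappa k}{\varepsilon(\kappa-2ik)} .
\]
Multiplying numerator and denominator by $\kappa+2ik$ gives
\[
\operatorname{Re}\ell(k)=-\frac{\kappa\,(1+\frac\beta2)\,k^2}{\varepsilon(\kappa^2+4k^2)} .
\]
For $\beta>-2$ this is $\le -c_0<0$ uniformly in $k\neq0$; the worst case is $|k|=1$, and the limit as $|k|\to\infty$ is $-\kappa(1+\beta/2)/(4\varepsilon)$. Hence, by Plancherel, with $\mathcal E(t)=\|\partial_x^2f\|_{L^2}^2+\|f\|_{L^2}^2$,
\[
\operatorname{Re}\langle \partial_x^j Lf,\partial_x^j f\rangle\le -c_0\|\partial_x^jf\|_{L^2}^2 ,\qquad j=0,2 .
\]
Poincaré's inequality on mean-zero functions makes $\mathcal E\simeq\|f\|_{H^2}^2$.

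\emph{Step 3: nonlinear estimate (main obstacle).} The nonlinearity loses one derivative, since $Q\partial_x(ff_x)$ is only in $H^1$ when $f\in H^2$, so I cannot treat it semilinearly. Instead I use the identity
\[
Q\partial_x=-\tfrac12\Id+\tfrac\kappa2 Q ,
\]
which follows from $(\kappa-2\partial_x)^{-1}(-2\partial_x)=\Id-\kappa Q$. This rewrites the principal term as
\[
-\tfrac12\Big(2+\tfrac\beta4\Big)ff_x+\text{(terms involving $Q$, of order $\le0$ acting on $ff_x$)} .
\]
For the principal term, $\langle\partial_x^2(ff_x),\partial_x^2f\rangle$ expands by Leibniz. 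The top-order piece $\int f\,\partial_x^3f\,\partial_x^2f=-\frac12\int f_x(\partial_x^2f)^2$ is handled by integration by parts. The remaining pieces, of the form $\int f_x(\partial_x^2 f)^2$, are bounded by $\|f_x\|_{L^\infty}\|\partial_x^2f\|_{L^2}^2\lesssim\|f\|_{H^2}^3$ using $H^1\hookrightarrow L^\infty$.

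The $Q$-terms and $Q(ff_x)$ are bounded in $H^2$ by $\|ff_x\|_{H^1}\lesssim\|f\|_{H^2}^2$, using the algebra property \eqref{Alg:pro}. The $L^2$ level is handled the same way. Altogether,
\[
\frac{d}{dt}\mathcal E\le -2c_0\mathcal E+C_2\mathcal E^{3/2} .
\]
The point I expect to need the most care is checking that every top-order contribution really reduces to $\int f_x(\partial_x^2f)^2$ after the identity for $Q\partial_x$.

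\emph{Step 4: bootstrap.} Choose $\delta_0$ so that $C_2(C\delta_0^2)^{1/2}\le c_0$, with $C$ the equivalence constant between $\mathcal E$ and $\|f\|_{H^2}^2$. As long as $\mathcal E^{1/2}\le c_0/C_2$, the inequality gives $\mathcal E'\le -c_0\mathcal E$, hence $\mathcal E(t)\le e^{-c_0t}\mathcal E(0)$. So the smallness condition propagates by a continuity argument.

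The $H^2$ norm is then bounded on $[0,T_{\max})$, the continuation criterion forces $T_{\max}=\infty$, and
\[
\|f(t)\|_{H^2}\le Ce^{-c_0t/2}\|f_0\|_{H^2}\qquad\text{for all }t\ge0 .
\]
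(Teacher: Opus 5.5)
Your proof is correct. It follows the same overall strategy as the paper: an $H^2$ energy inequality with linear damping coming from $\beta>-2$, a top-order cancellation in the nonlinearity, Poincar\'e on mean-zero functions, and a continuity/bootstrap argument. The decomposition, however, is different.

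\textbf{The paper's route.} It multiplies through by $\Id-\frac{4}{\kappa^2}\partial_{xx}$ to get the local form \eqref{eq:bbm:general}. It then tests against $f$ and $-f_{xx}$, so it works with the BBM-weighted energies $E_1,E_2$. After integration by parts the linear terms give a full dissipation $-\mu\|\partial_x^{j+1}f\|_{L^2}^2$ with $\mu=\frac{1}{\varepsilon}(1+\frac{\beta}{2})$. The nonlinear terms, including $\int_\TT (ff_x)_x f_{xxx}\,dx$, are bounded by $(\|f\|_{L^\infty}+\|f_x\|_{L^\infty})D_j$.

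\textbf{Your route.} You keep the nonlocal form $f_t=Q[\cdots]$ with $Q=\mathcal M\mathcal P=(\kappa-2\partial_x)^{-1}$, which is exact at $\nu=0$. You use unweighted $L^2$ and $\dot H^2$ norms and read the damping directly off the symbol. This is the same computation in different clothing: $-\operatorname{Re}\ell(k)$ equals $\kappa^{-1}\mu k^2$ divided by the BBM weight $1+\frac{4k^2}{\kappa^2}$. The factor $\kappa^{-1}$ is $\mathcal P$, which \eqref{eq:bbm:general} as printed omits; this is harmless.

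\textbf{The genuinely new ingredient.} The identity $Q\partial_x=-\frac12\Id+\frac{\kappa}{2}Q$ rewrites the model as $f_t+\frac12(2+\frac{\beta}{4})ff_x=Lf+\gamma\,Q(ff_x)$ for a constant $\gamma$. So the $\nu=0$ equation is a damped Burgers-type equation. It is quasilinear, not semilinear as the Remark following Theorem~\ref{th:existence:strong} describes it. The cancellation $\int_\TT f f_{xxx}f_{xx}\,dx=-\frac12\int_\TT f_x f_{xx}^2\,dx$ is exactly what is needed at top order.

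\textbf{What each buys.} Your route yields a single inequality $\mathcal E'\le -2c_0\mathcal E+C\mathcal E^{3/2}$ with the explicit rate $c_0=\frac{\kappa(1+\beta/2)}{\varepsilon(\kappa^2+4)}$, and it makes the mechanism transparent. The paper's route stays entirely in physical space on a local PDE, at the cost of two coupled weighted estimates.
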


\begin{proof}[Proof of Theorem \ref{th:bbm-global}]
	We only establish the a prior energy estimates leading to global control and decay under the smallness
	assumption on $\|f_0\|_{H^2}$. The construction of solutions via the approximation/compactness procedure follow by the same standard steps used in the local well-posedness theorem
	(Theorem~\ref{th:existence:strong}). To avoid repetition we omit these routine details. 
	\medskip
	
	First notice that when $\nu=0$, the operators $\mathcal{P}$ and $\mathcal{M}$ reduce to
	\[
	\mathcal{P}=\frac{1}{\kappa}\,\mathrm{Id},\qquad
	\mathcal{M}=\Big(\mathrm{Id}-\frac{4}{\kappa^2}\partial_{xx}\Big)^{-1}\Big(\mathrm{Id}+\frac{2}{\kappa}\partial_x\Big),
	\]
	and therefore \eqref{asymptotic:model:contr} can be written in the local BBM-type form
	\begin{equation}\label{eq:bbm:general}
		f_t-\frac{4}{\kappa^2}\partial_{xx}f_t
		=\Big(1+\frac{2}{\kappa}\partial_x\Big)\Big[
		-\frac{1}{\varepsilon}\Big(1-\frac{\beta}{2}\Big)f_{xx}
		+\frac{\kappa}{\varepsilon}f_x
		+\Big(2+\frac{\beta}{4}\Big)(ff_x)_x
		-2\kappa ff_x\Big].
	\end{equation}
	We set
	\[
	a:=\frac{1}{\varepsilon}\Big(\frac{\beta}{2}-1\Big),\qquad
	b:=\frac{\kappa}{\varepsilon},\qquad
	c:=2+\frac{\beta}{4},\qquad
	d:=2\kappa,
	\]
	so that the bracket in \eqref{eq:bbm:general} is $Q:=a f_{xx}+b f_x+c(ff_x)_x-dff_x$. \medskip
	
	We introduce the following energies
	\[
	E_1(t):=\|f(t)\|_{L^2}^2+\frac{4}{\kappa^2}\|f_x(t)\|_{L^2}^2,
	\qquad
	E_2(t):=\|f_x(t)\|_{L^2}^2+\frac{4}{\kappa^2}\|f_{xx}(t)\|_{L^2}^2,
	\]
	and the dissipations $D_1(t):=\|f_x(t)\|_{L^2}^2$, $D_2(t):=\|f_{xx}(t)\|_{L^2}^2$. Note that $E_1\simeq\|f\|_{H^1}^2$ and
	$E_2\simeq\|f\|_{H^2}^2$ (with constants depending only on $\kappa$). Since $f$ has zero mean, Poincar\'e inequality yields
	\begin{equation}\label{eq:coercive-bbm}
		E_1(t)+E_2(t)\ \lesssim\ D_1(t)+D_2(t).
	\end{equation}
	We will also use the one-dimensional Gagliardo--Nirenberg bounds
	\begin{equation}\label{eq:GN-BBM}
		\|f\|_{L^\infty}\lesssim \|f\|_{L^2}^{1/2}\|f_x\|_{L^2}^{1/2}\lesssim \sqrt{E_1},
		\qquad
		\|f_x\|_{L^\infty}\lesssim \|f_x\|_{L^2}^{1/2}\|f_{xx}\|_{L^2}^{1/2}\lesssim \sqrt{E_2}.
	\end{equation}
	
	Taking the $L^2$ inner product of \eqref{eq:bbm:general} with $f$ and integrating by parts gives
	\[
	\frac12\frac{d}{dt}\|f\|_{L^2}^2-\frac{4}{\kappa^2}\int_\TT f_{xxt}f\,dx
	=\frac12\frac{d}{dt}\|f\|_{L^2}^2+\frac{2}{\kappa^2}\frac{d}{dt}\|f_x\|_{L^2}^2
	=\frac12\frac{d}{dt}E_1(t),
	\]
	while on the right-hand side
	\[
	\int_\TT\Big(1+\frac{2}{\kappa}\partial_x\Big)Q\,f\,dx
	=\int_\TT Q f\,dx-\frac{2}{\kappa}\int_\TT Q f_x\,dx.
	\]
	The linear contributions are
	\[
	\int_\TT a f_{xx}f\,dx=-aD_1,\qquad
	-\frac{2}{\kappa}\int_\TT b f_x f_x\,dx=-\frac{2b}{\kappa}D_1,
	\]
	whereas $\int_\TT b f_x f\,dx=0$ and $-\frac{2}{\kappa}\int_\TT a f_{xx} f_x\,dx=0$. Thus the linear part yields
	$-(a+2b/\kappa)D_1$. For the nonlinear part, using
	\[
	\int_\TT (ff_x)_x f\,dx=-\int_\TT f\,f_x^2\,dx,\qquad
	\int_\TT (ff_x)_x f_x\,dx=\frac12\int_\TT f_x^3\,dx,\qquad
	\int_\TT f f_x f_x\,dx=\int_\TT f\,f_x^2\,dx,
	\]
	we obtain
	\[
	\Big|\int_\TT (c(ff_x)_x-dff_x)f\,dx\Big|\lesssim \|f\|_{L^\infty}D_1,\qquad
	\Big|\int_\TT (c(ff_x)_x-dff_x)f_x\,dx\Big|\lesssim \|f_x\|_{L^\infty}D_1.
	\]
	Invoking \eqref{eq:GN-BBM} we conclude that
	\begin{equation}\label{ineq:E1}
		\frac12\frac{d}{dt}E_1(t)+\Big(a+\frac{2b}{\kappa}\Big)D_1(t)
		\le C\big(\sqrt{E_1(t)}+\sqrt{E_2(t)}\big)D_1(t),
	\end{equation}
	for some $C=C(\varepsilon,\kappa,\beta)$. \medskip
		
	Next, taking the $L^2$ inner product of \eqref{eq:bbm:general} with $-f_{xx}$ and integrating by parts yields
	\[
	\frac12\frac{d}{dt}\|f_x\|_{L^2}^2-\frac{4}{\kappa^2}\int_\TT f_{xxt}(-f_{xx})\,dx
	=\frac12\frac{d}{dt}\|f_x\|_{L^2}^2+\frac{2}{\kappa^2}\frac{d}{dt}\|f_{xx}\|_{L^2}^2
	=\frac12\frac{d}{dt}E_2(t),
	\]
	and
	\[
	\int_\TT\Big(1+\frac{2}{\kappa}\partial_x\Big)Q\,(-f_{xx})\,dx
	=\int_\TT Q(-f_{xx})\,dx-\frac{2}{\kappa}\int_\TT Q(-f_{xxx})\,dx.
	\]
	The linear terms satisfy
	\[
	\int_\TT a f_{xx}(-f_{xx})\,dx=-aD_2,\qquad
	-\frac{2}{\kappa}\int_\TT b f_x(-f_{xxx})\,dx
	=\frac{2b}{\kappa}\int_\TT f_x f_{xxx}\,dx=-\frac{2b}{\kappa}D_2,
	\]
	while $\int_\TT b f_x(-f_{xx})\,dx=0$ and $-\frac{2}{\kappa}\int_\TT a f_{xx}(-f_{xxx})\,dx=0$, hence the linear part yields
	$-(a+2b/\kappa)D_2$. For the nonlinear terms we use we find that
	\[
	\Big|\int_\TT (ff_x)_x f_{xx}\,dx\Big|
	\lesssim \|f_x\|_{L^\infty}\|f_x\|_{L^2}\|f_{xx}\|_{L^2}+\|f\|_{L^\infty}\|f_{xx}\|_{L^2}^2
	\lesssim (\|f\|_{L^\infty}+\|f_x\|_{L^\infty})D_2,
	\]
	and similarly
	\[
	\Big|\int_\TT (ff_x)_x f_{xxx}\,dx\Big|
	+\Big|\int_\TT (f f_x) f_{xxx}\,dx\Big|
	\lesssim (\|f\|_{L^\infty}+\|f_x\|_{L^\infty})D_2.
	\]
	Using \eqref{eq:GN-BBM} and Young's inequality to absorb mixed products we obtain
	\begin{equation}\label{ineq:E2}
		\frac12\frac{d}{dt}E_2(t)+\Big(a+\frac{2b}{\kappa}\Big)D_2(t)
		\le C\big(\sqrt{E_1(t)}+\sqrt{E_2(t)}\big)D_2(t).
	\end{equation}
	
	Setting $F:=E_1+E_2$, $G:=D_1+D_2$ and $\mu:=a+\frac{2b}{\kappa}>0$, adding \eqref{ineq:E1} and \eqref{ineq:E2} gives
	\begin{equation}\label{eq:F-ineq}
		\frac12 F'(t)+\mu\,G(t)\le C\sqrt{F(t)}\,G(t).
	\end{equation}
	Choose $\delta_0>0$ such that $C\delta_0\le \mu/2$ and assume $\|f_0\|_{H^2}\le\delta_0$ (absorbing the equivalence between
	$\|f_0\|_{H^2}$ and $F(0)^{1/2}$ into $\delta_0$). Define
	\[
	\mathcal{I}:=\Big\{t\in[0,T_{\max})\,:\,F(s)\le \delta_0^2\ \text{for all }s\in[0,t]\Big\},\qquad
	T_*:=\sup\mathcal{I}.
	\]
	Then $T_*>0$ and $F(t)\le \delta_0^2$ for $t\in[0,T_*)$. On this interval \eqref{eq:F-ineq} implies
	\[
	\frac12 F'(t)+\mu G(t)\le C\delta_0 G(t)\le \frac{\mu}{2}G(t),
	\qquad\text{hence}\qquad
	F'(t)+\mu G(t)\le 0 \quad \text{on }[0,T_*).
	\]
	In particular $F$ is non-increasing on $[0,T_*)$ and thus $F(t)\le F(0)\le \delta_0^2$ there; by continuity this forces
	$T_*=T_{\max}$, so the inequality $F'(t)+\mu G(t)\le 0$ holds for all $t\in[0,T_{\max})$.
	
	Finally, using the coercivity \eqref{eq:coercive-bbm} we have $G\gtrsim F$, hence
	\[
	F'(t)+c_1 F(t)\le 0
	\]
	for some $c_1=c_1(\varepsilon,\kappa,\beta)>0$, which yields $F(t)\le F(0)e^{-c_1 t}$. Since $F\simeq \|f\|_{H^2}^2$,
	we conclude $\|f(t)\|_{H^2}\le C e^{-ct}\|f_0\|_{H^2}$ for suitable $C,c>0$, and in particular $T_{\max}=+\infty$ and
	$\|f(t)\|_{H^2}\to 0$ as $t\to\infty$.
\end{proof}
	
\section{Numerical simulations}\label{sec:numerical}

We present numerical experiments for the asymptotic equation \eqref{asymptotic:model:contr}.
The computations are performed on the periodic domain $[0,2\pi]$, discretised with
$N=2^{14}$ uniformly spaced grid points.
After applying the Fourier transform to \eqref{asymptotic:model:contr}, we obtain a truncated system of
ODEs for the spectral coefficients $\widehat{f}_k(t)$, which we integrate by means of the Runge--Kutta method implemented in the \texttt{solve\_ivp} routine with \texttt{RK45} from the \texttt{SciPy} library, using tolerances
$\texttt{rtol}=\texttt{atol}=10^{-8}$.
The nonlinear terms are evaluated pseudospectrally using \texttt{NumPy}'s
\texttt{rfft}/\texttt{irfft} routines: products are computed in physical
space through the inverse FFT, while derivatives are implemented in Fourier space as multiplication by $ik$, $-k^2$, and $-ik^3$.
All initial data satisfy the mean-zero condition required by Theorem~\ref{th:existence:strong}. More precisely, we consider
\[
f_0(x)=A\,\operatorname{sech}^2(x-\pi)-\mu_{f_0},
\]
where $\mu_{f_0}$ denotes the spatial mean of $A\,\operatorname{sech}^2(x-\pi)$. \medskip

For $\nu>0$ and sufficiently large initial amplitude, the adaptive time integrator requires increasingly small time steps in order to satisfy the prescribed tolerances, and in some cases the computation stops before reaching the target final time.
In view of Theorem~\ref{th:continuation}, the quantity
\begin{equation}\label{eq:bkm}
    \int_0^{T'}\|f_x(t)\|_{L^\infty}\,dt
\end{equation}
plays a central role in the continuation of strong solutions: finiteness of \eqref{eq:bkm} allows continuation, whereas any finite maximal time of existence must be accompanied by its divergence.
This motivates monitoring the following two quantities in our simulations:
\begin{enumerate}
    \item[(i)] $1/\|f_x(t)\|_{L^\infty}$,
    \item[(ii)] the cumulative integral
    \[
    I(t)=\int_0^t \|f_x(s)\|_{L^\infty}\,ds,
    \]
    approximated by the trapezoidal rule.
\end{enumerate}

\subsection{Numerical experiments for $\nu>0$}

We begin with two representative simulations in the viscoelastic regime $\nu>0$, corresponding to a small-amplitude and a large-amplitude initial profile, respectively.

Figure~\ref{fig:nu1-1} shows snapshots of $f(x,t)$ for the parameter values
\begin{equation}\label{params:nu1-1}
    \nu=1,\quad \varepsilon=1,\quad \kappa=1,\quad \beta=1,\quad A=0.10.
\end{equation}
In this case, the computation reaches the final time $t=10$.

\begin{figure}[H]
    \centering
    \includegraphics[width=\linewidth]{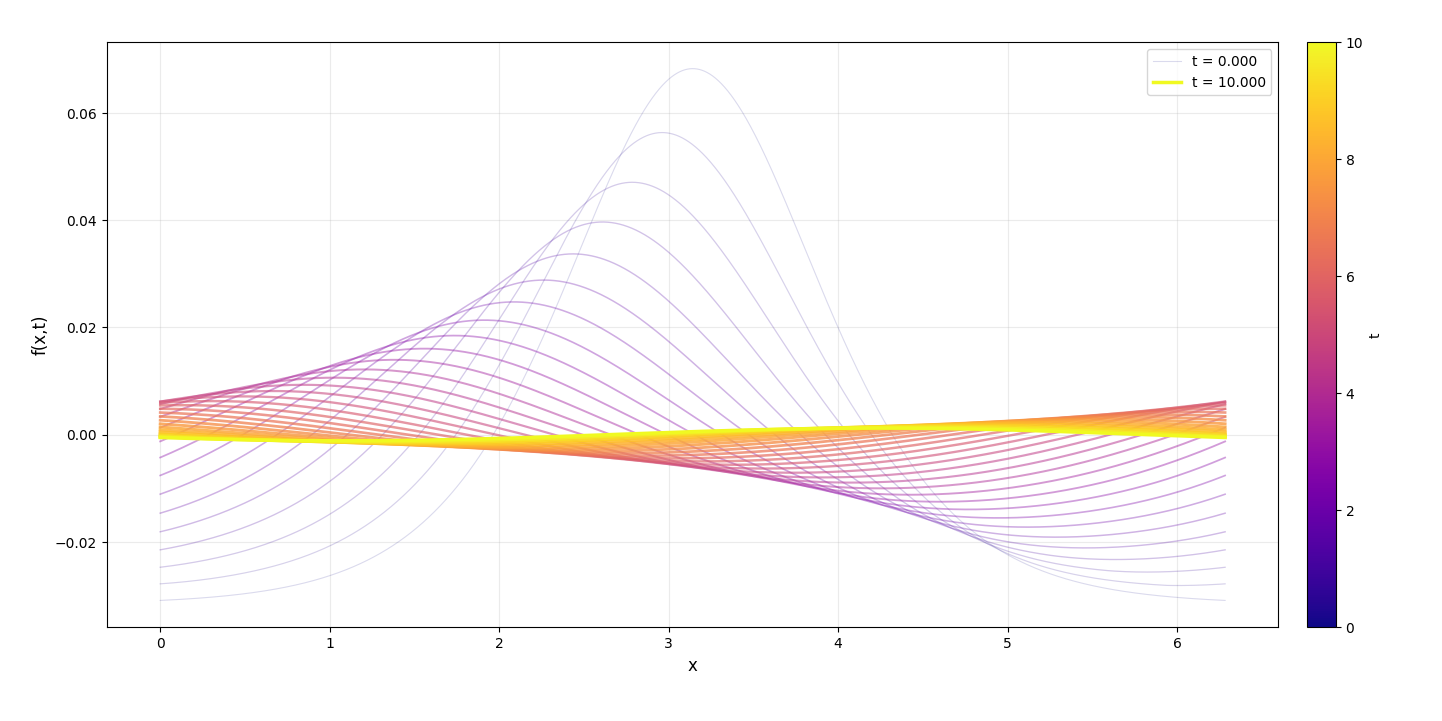}
    \caption{Evolution of $f$ for the parameter values given in \eqref{params:nu1-1}.}
    \label{fig:nu1-1}
\end{figure}

The corresponding diagnostics for $1/\|f_x\|_{L^\infty}$ and $I(t)$ are displayed in Figure~\ref{fig:nu1-5-6}.

\begin{figure}[H]
    \centering
    \includegraphics[width=0.45\linewidth]{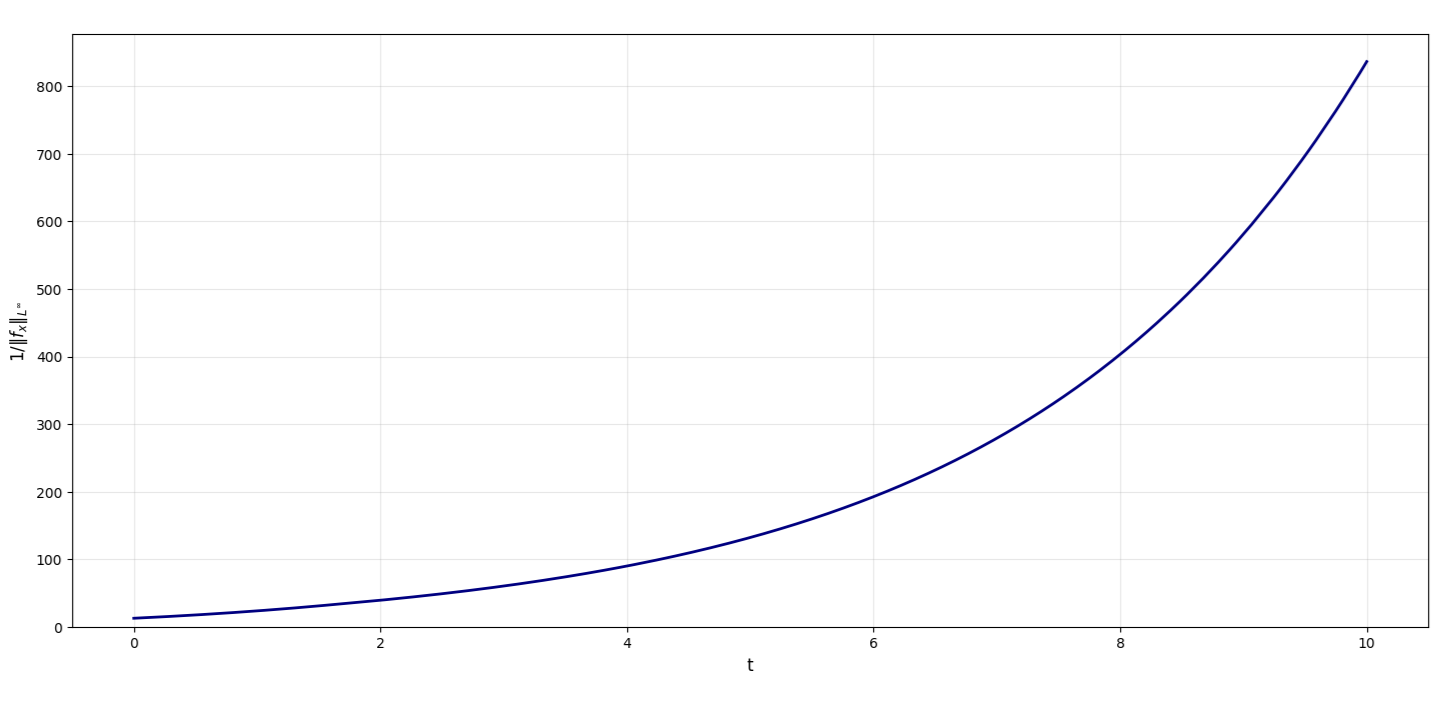}
    \includegraphics[width=0.45\linewidth]{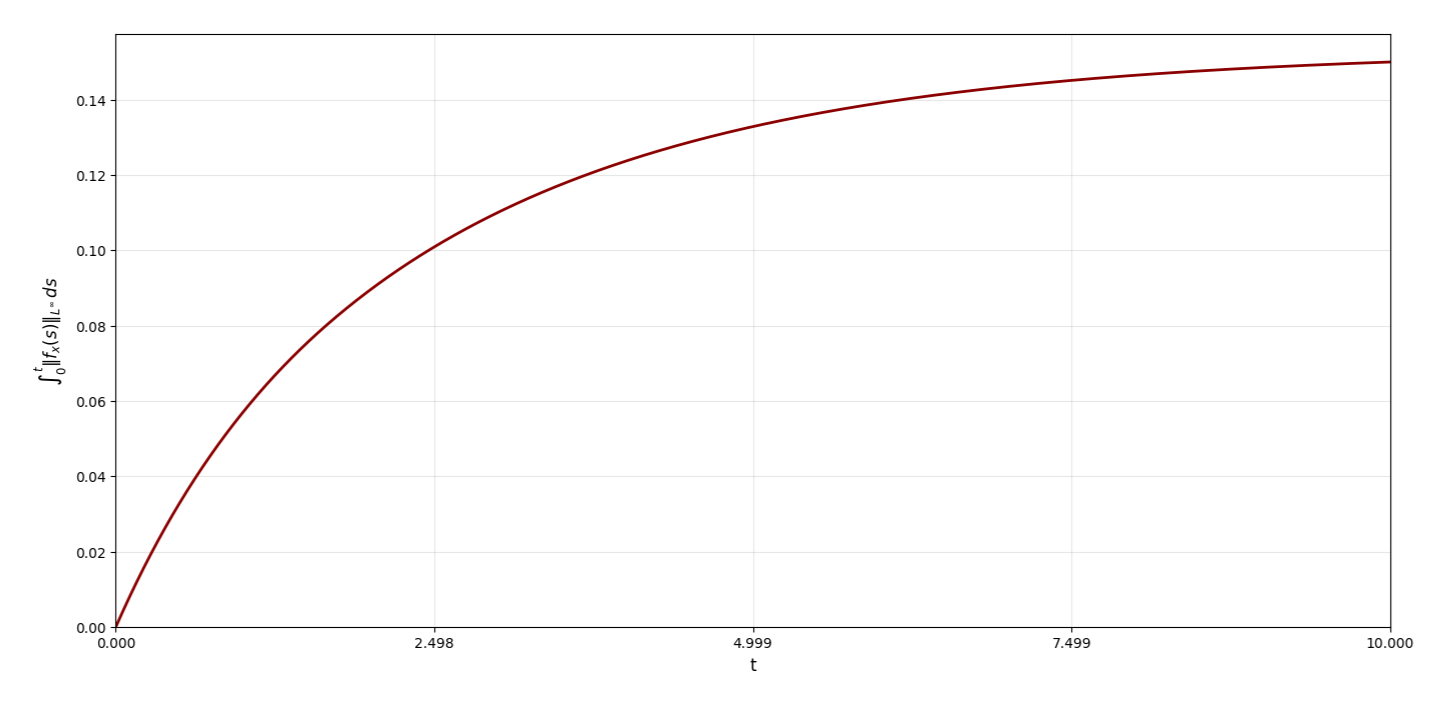}
    \caption{Diagnostics corresponding to Figure~\ref{fig:nu1-1}: left, $1/\|f_x\|_{L^\infty}$; right, $I(t)$.}
    \label{fig:nu1-5-6}
\end{figure}

Over the computed time interval, the solution remains regular and the diagnostics are consistent with a smooth dissipative evolution.

We next consider the same parameter values, except for a larger amplitude,
\begin{equation}\label{params:nu1-11}
    \nu=1,\quad \varepsilon=1,\quad \kappa=1,\quad \beta=1,\quad A=5.00.
\end{equation}
In this case, the adaptive solver stops much earlier, at approximately $t=0.665$, due to the increasingly small time step required to maintain the prescribed accuracy.

\begin{figure}[H]
    \centering
    \includegraphics[width=\linewidth]{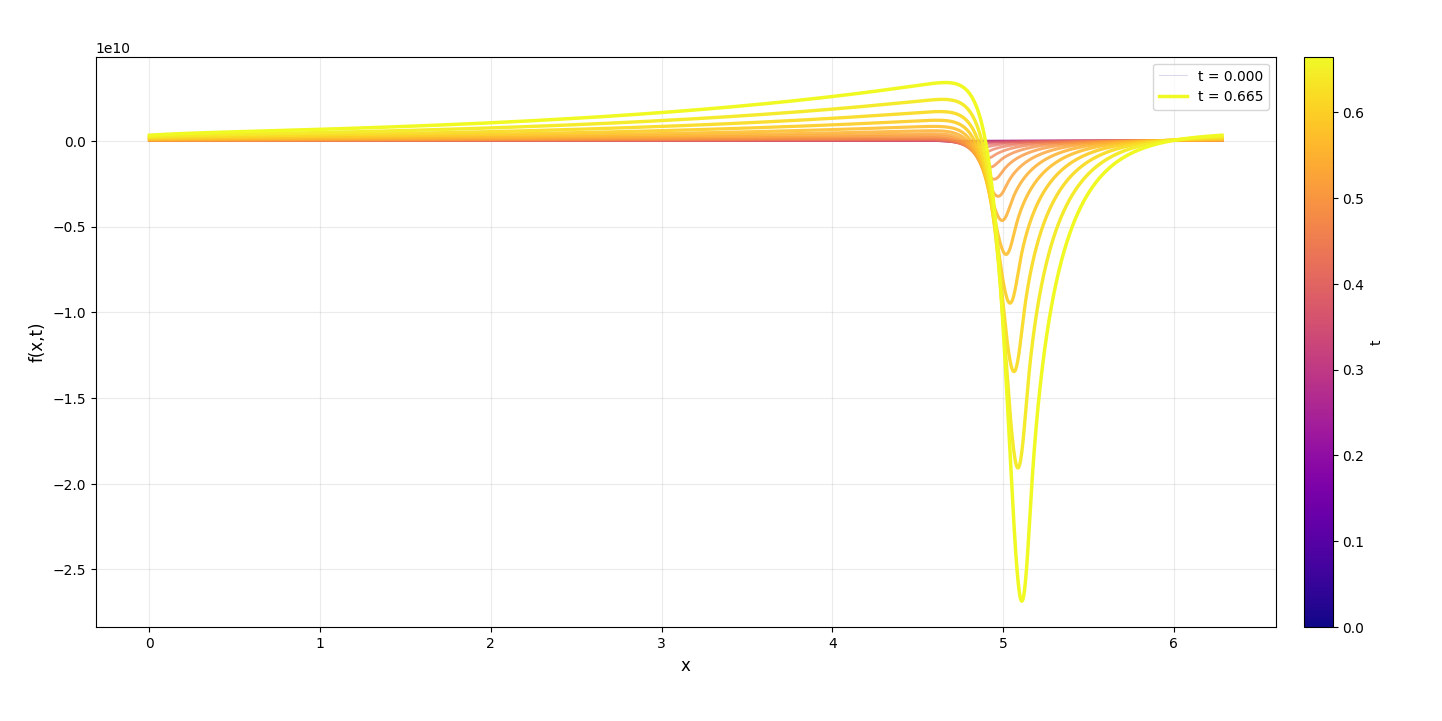}
    \caption{Evolution of $f$ for the parameter values given in \eqref{params:nu1-11}. The vertical scale reaches the order of $10^{10}$.}
    \label{fig:nu1-11}
\end{figure}

The corresponding diagnostics are shown in Figure~\ref{fig:nu1-55-66}.

\begin{figure}[H]
    \centering
    \includegraphics[width=0.45\linewidth]{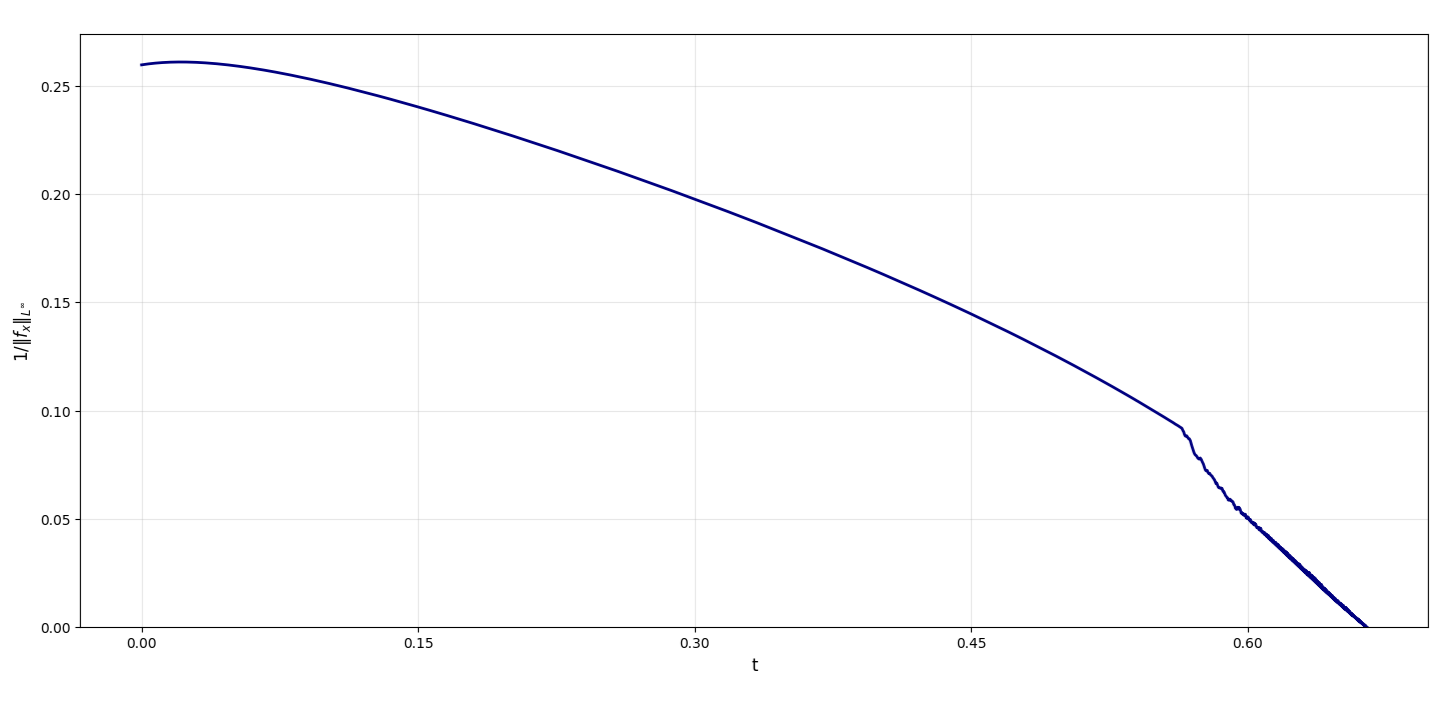}
    \includegraphics[width=0.45\linewidth]{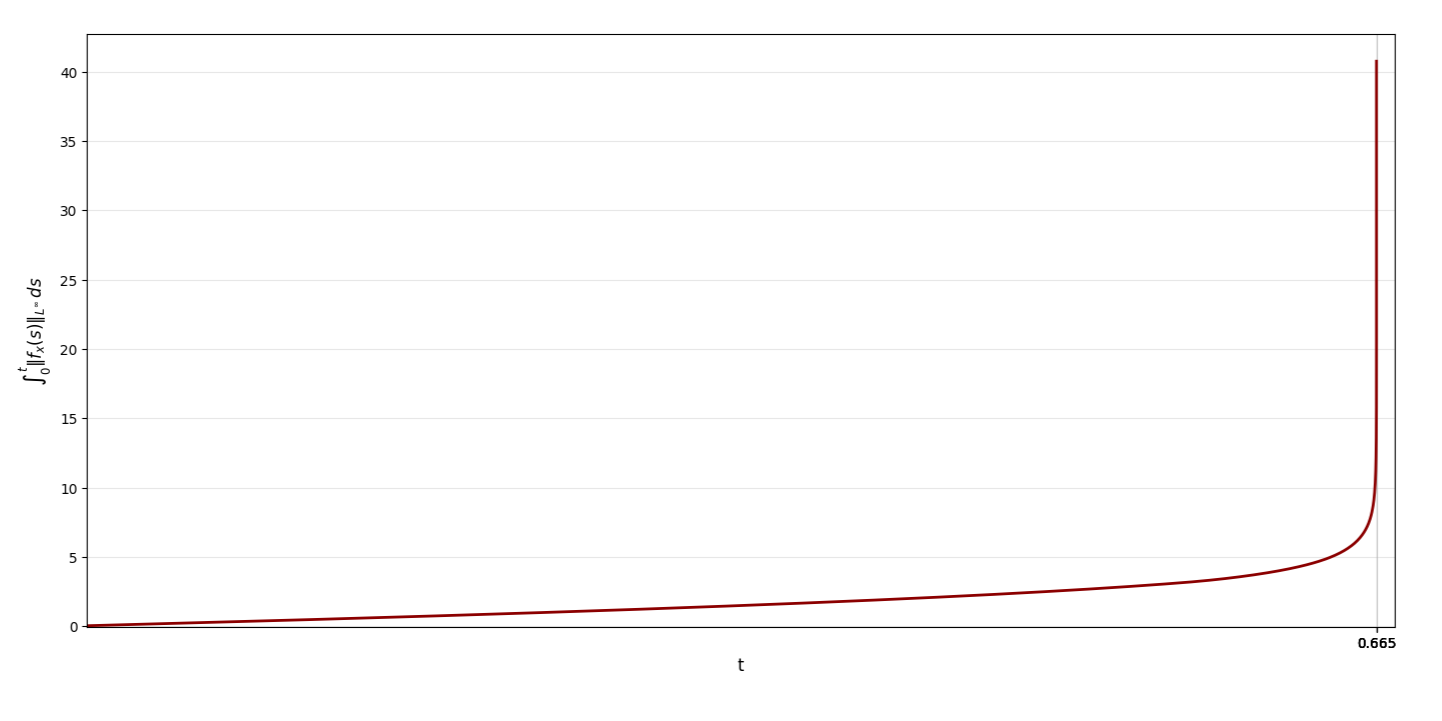}
    \caption{Diagnostics corresponding to Figure~\ref{fig:nu1-11}: left, $1/\|f_x\|_{L^\infty}$; right, $I(t)$.}
    \label{fig:nu1-55-66}
\end{figure}

Although this behavior is compatible with a possible finite-time loss of regularity, the computations do not provide conclusive numerical evidence of blow-up.
What they do indicate is a marked qualitative difference between the small-amplitude and large-amplitude regimes, as reflected in the behavior of the diagnostics in Figures~\ref{fig:nu1-5-6} and \ref{fig:nu1-55-66}.

\subsection{Numerical experiments varying $\nu$ and $A$}

We next investigate the dependence of the dynamics on the viscoelastic parameter $\nu$ and on the amplitude $A$. We first fix $A=0.1$ and vary $\nu$ over the values
\[
\nu=0,\ 0.1,\ 0.5,\ 1,\ 1.5,\ 2,\ 3,
\]
while keeping the remaining parameters equal to $1$.
Figure~\ref{fig:c1} compares the final profiles obtained in each case.

\begin{figure}[H]
    \centering
    \includegraphics[width=\linewidth]{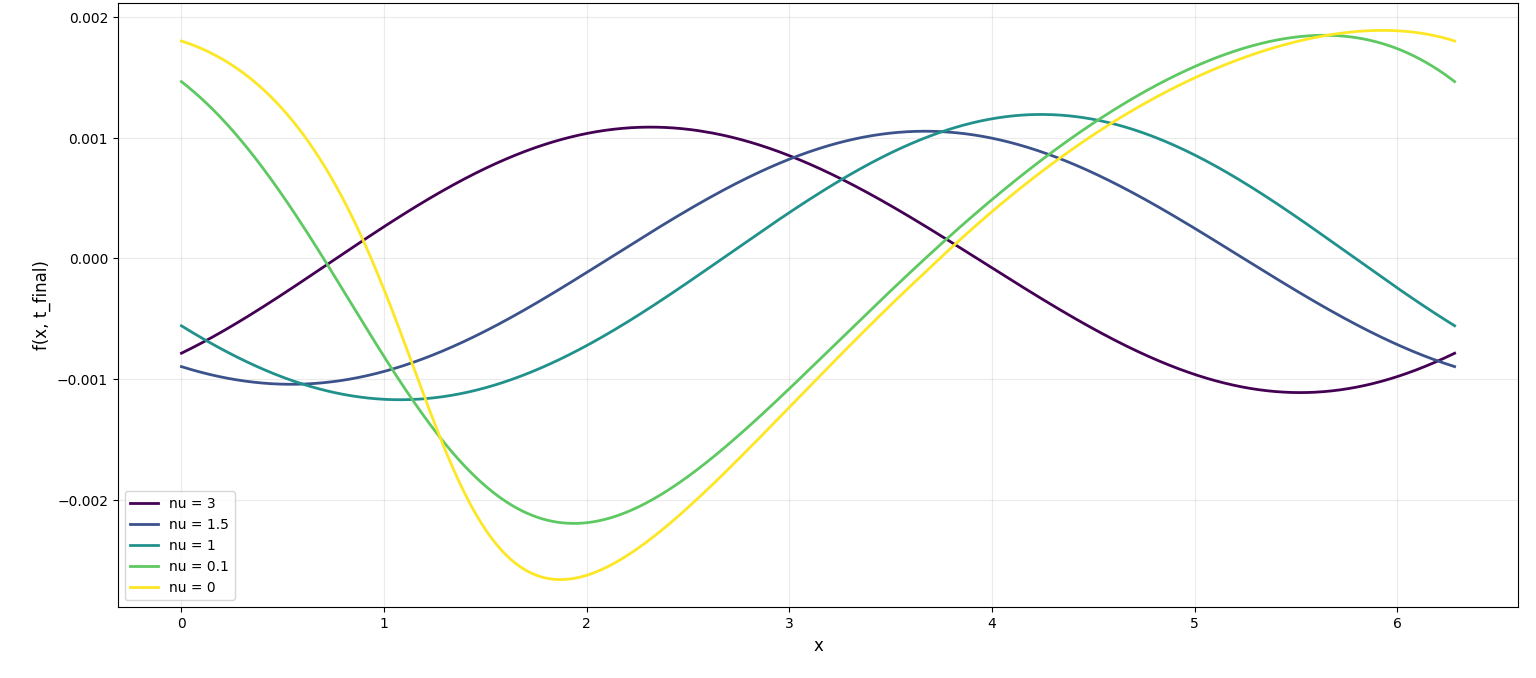}
    \caption{Comparison of the final profiles for $\nu=0, 0.1, 0.5, 1, 1.5, 2, 3$, with $A=0.1$ and all other parameters fixed to $1$.}
    \label{fig:c1}
\end{figure}

The associated diagnostics are displayed in Figure~\ref{fig:c23}.

\begin{figure}[H]
    \centering
    \includegraphics[width=0.45\linewidth]{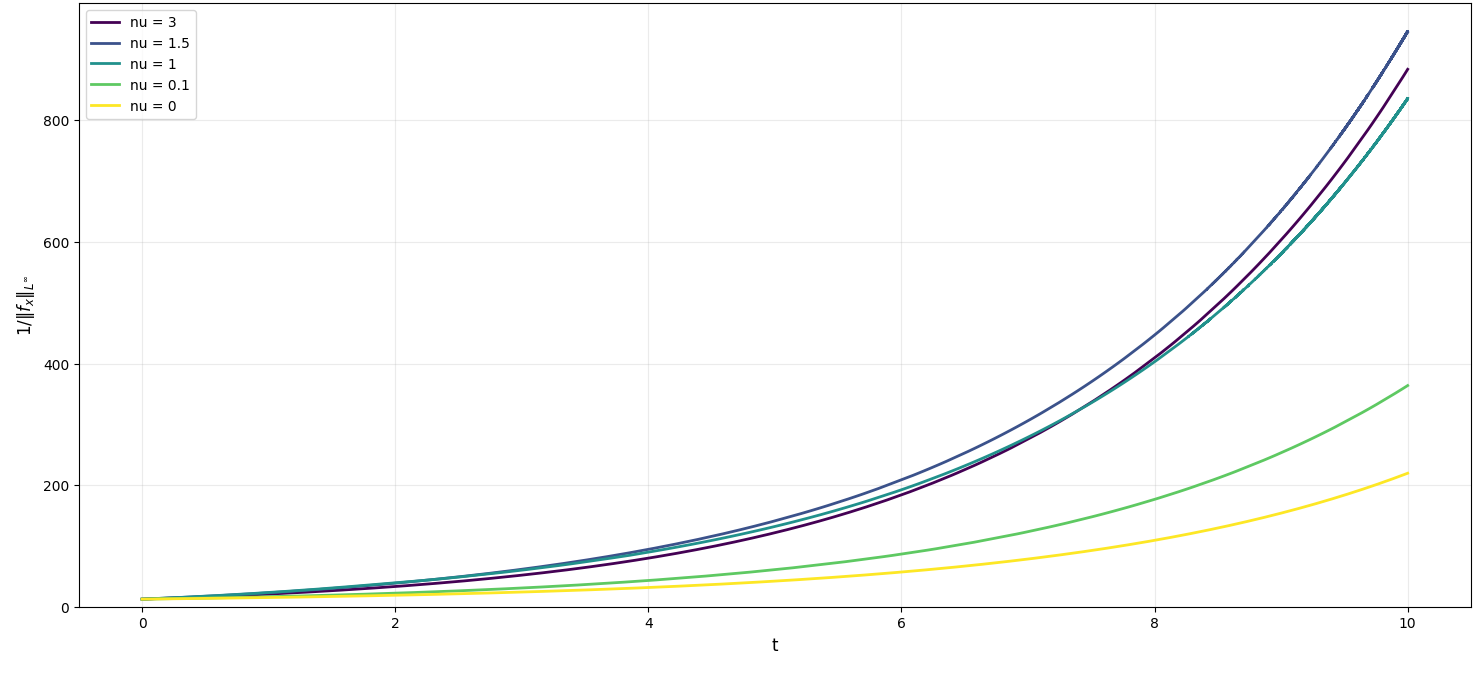}
    \includegraphics[width=0.45\linewidth]{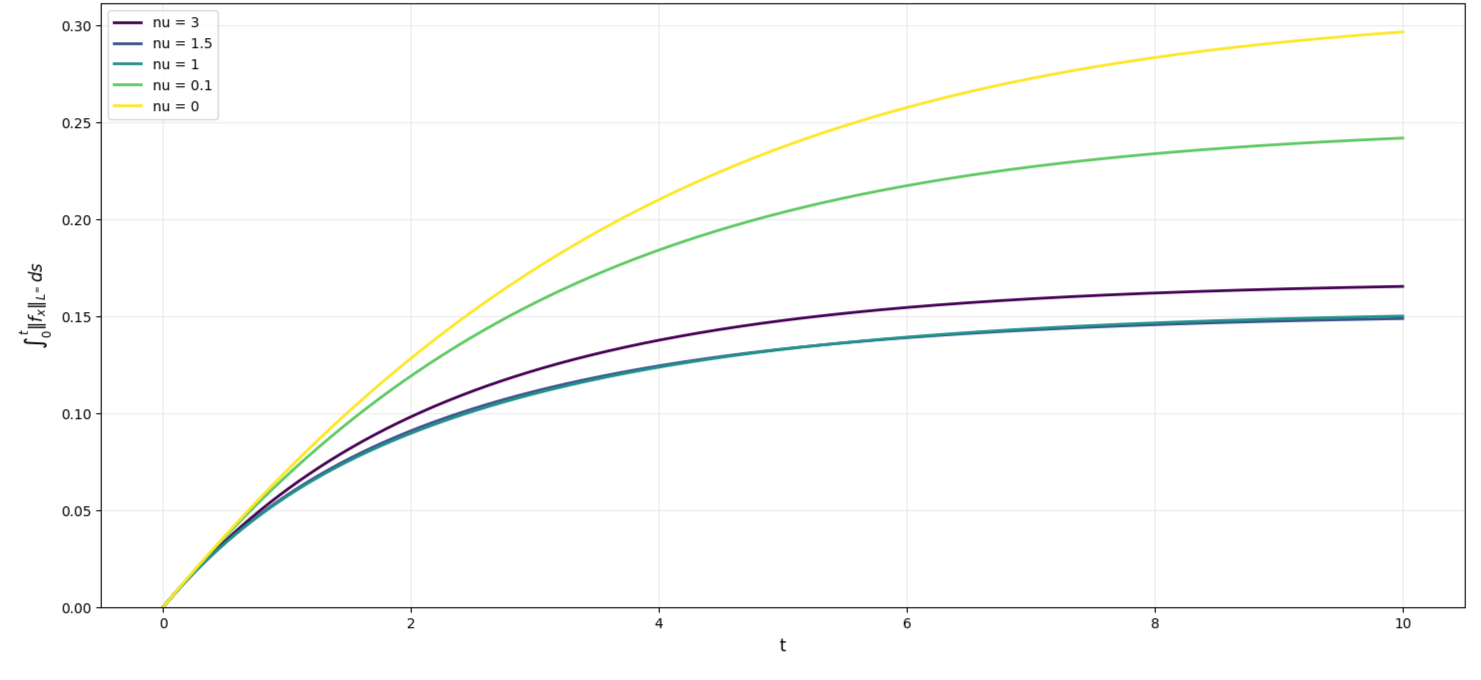}
    \caption{Diagnostics for the simulations in Figure~\ref{fig:c1}: left, $1/\|f_x\|_{L^\infty}$; right, $I(t)$.}
    \label{fig:c23}
\end{figure}

For this small-amplitude regime, the final profiles remain qualitatively similar across the tested values of $\nu$, and the diagnostics are consistent with dissipative behavior over the simulated time interval.

We next fix $\nu=1$ and vary the amplitude according to
\[
A=0.5,\ 1,\ 5,\ 10,\ 20,
\]
again keeping the remaining parameters equal to $1$.
The corresponding final profiles are shown in Figure~\ref{fig:ca1}.

\begin{figure}[H]
    \centering
    \includegraphics[width=\linewidth]{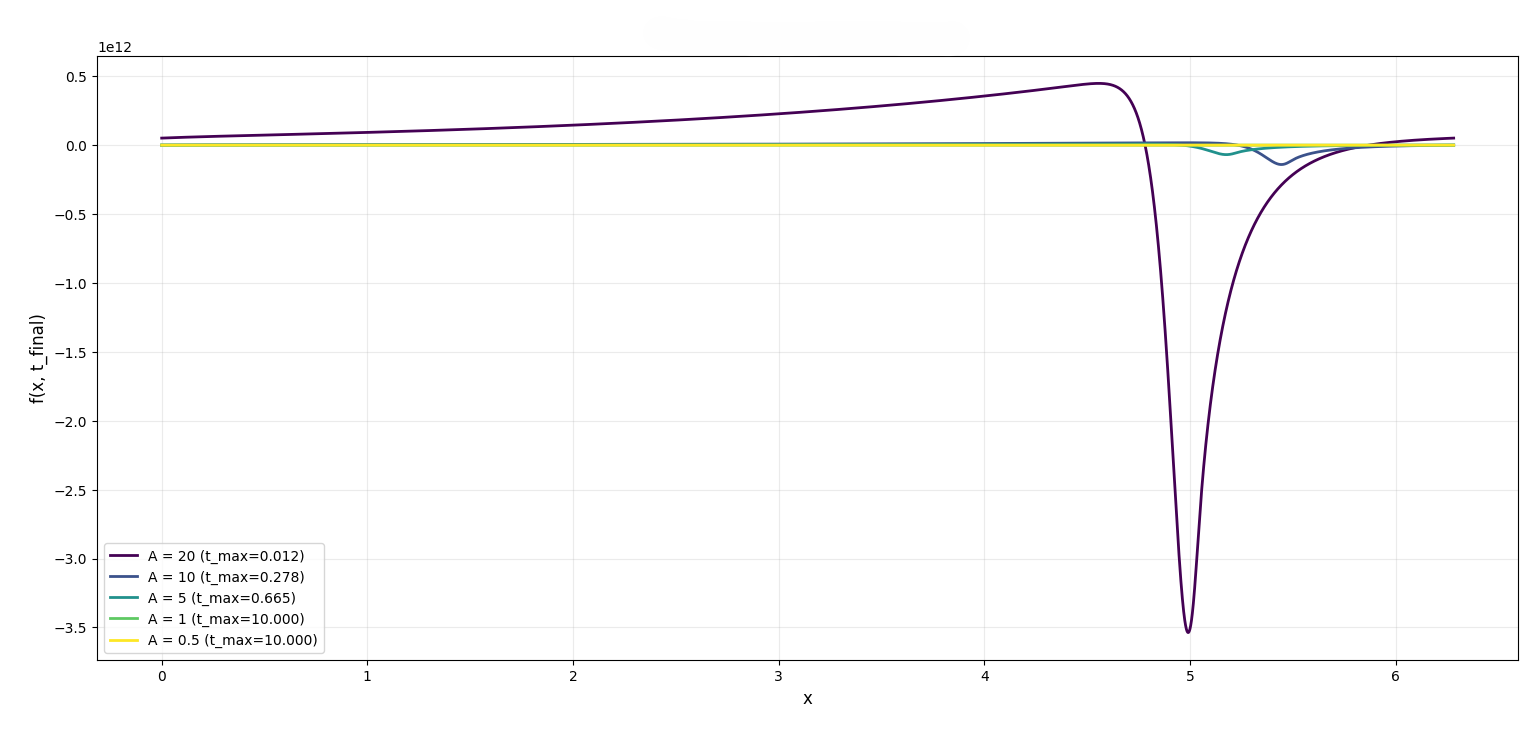}
    \caption{Comparison of the final profiles for $A=0.5, 1, 5, 10, 20$, with $\nu=1$ and all other parameters fixed to $1$. The vertical scale reaches the order of $10^{12}$.}
    \label{fig:ca1}
\end{figure}

Figure~\ref{fig:ca23} shows the corresponding diagnostics.

\begin{figure}[H]
    \centering
    \includegraphics[width=0.45\linewidth]{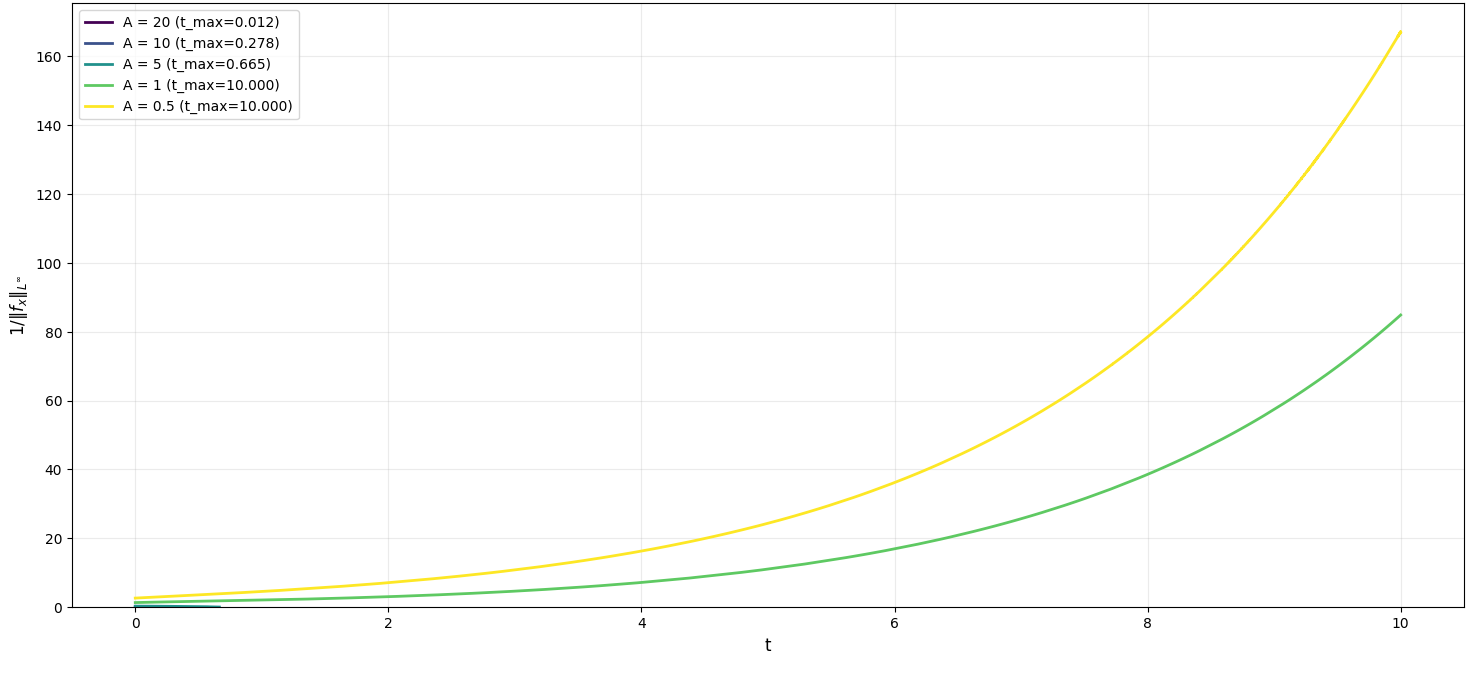}
    \includegraphics[width=0.45\linewidth]{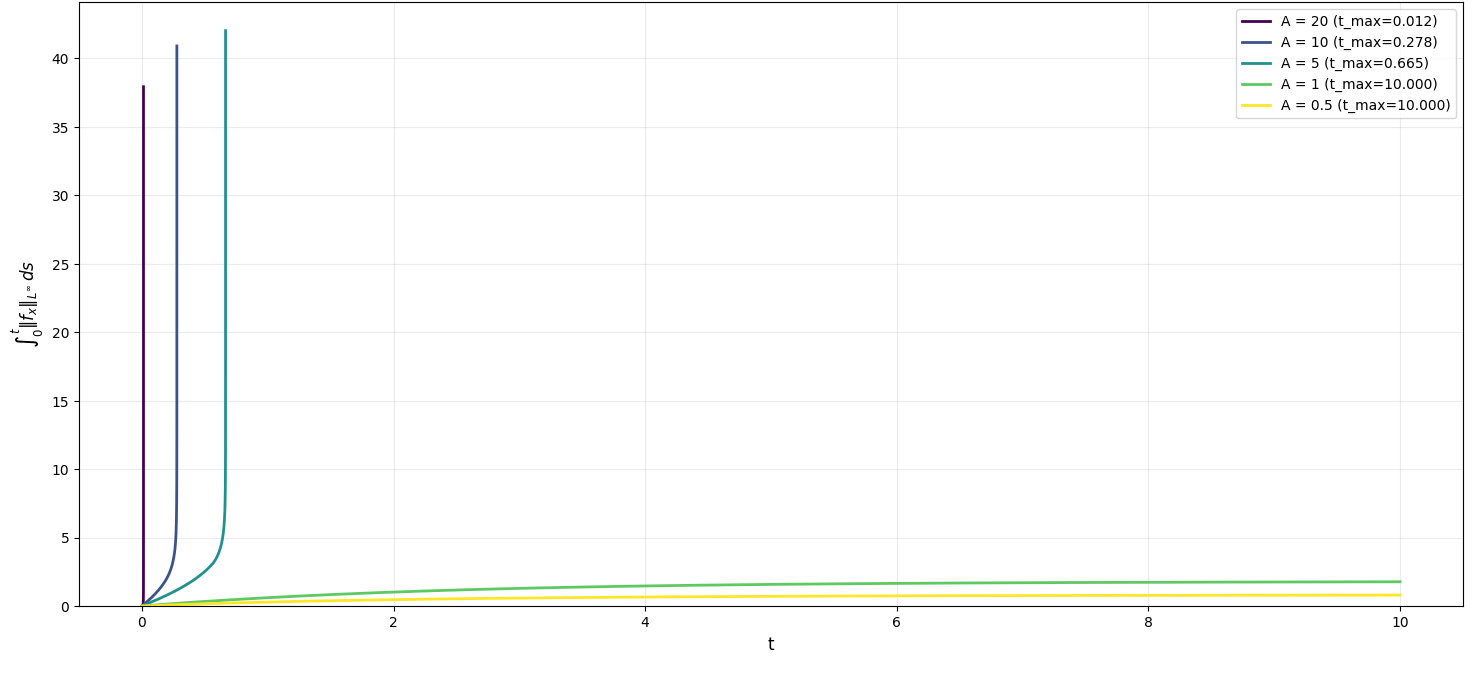}
    \caption{Diagnostics for the simulations in Figure~\ref{fig:ca1}: left, $1/\|f_x\|_{L^\infty}$; right, $I(t)$.}
    \label{fig:ca23}
\end{figure}

As the amplitude increases, the numerical behavior changes from clearly regular and dissipative regimes ($A=0.5,1$) to regimes in which gradients grow rapidly and the adaptive solver terminates much earlier.
In particular, for fixed $\nu$, larger values of $A$ lead to a substantially earlier loss of numerical resolution.

\subsection{Experiments with $\nu=0$}

We finally turn to the purely elastic regime $\nu=0$, for which the asymptotic equation reduces to a BBM-type model.
This is the regime covered by the global small-data result in Section~\ref{sec:bbm-global}.
We consider three values of $\beta$, namely
\[
\beta=2,\quad \beta=0,\quad \beta=-1,
\]
with $A=0.1$ and all remaining parameters equal to $1$. Figure~\ref{fig:beta2} presents the solution profiles for $\beta=2$.

\begin{figure}[H]
    \centering
    \includegraphics[width=\linewidth]{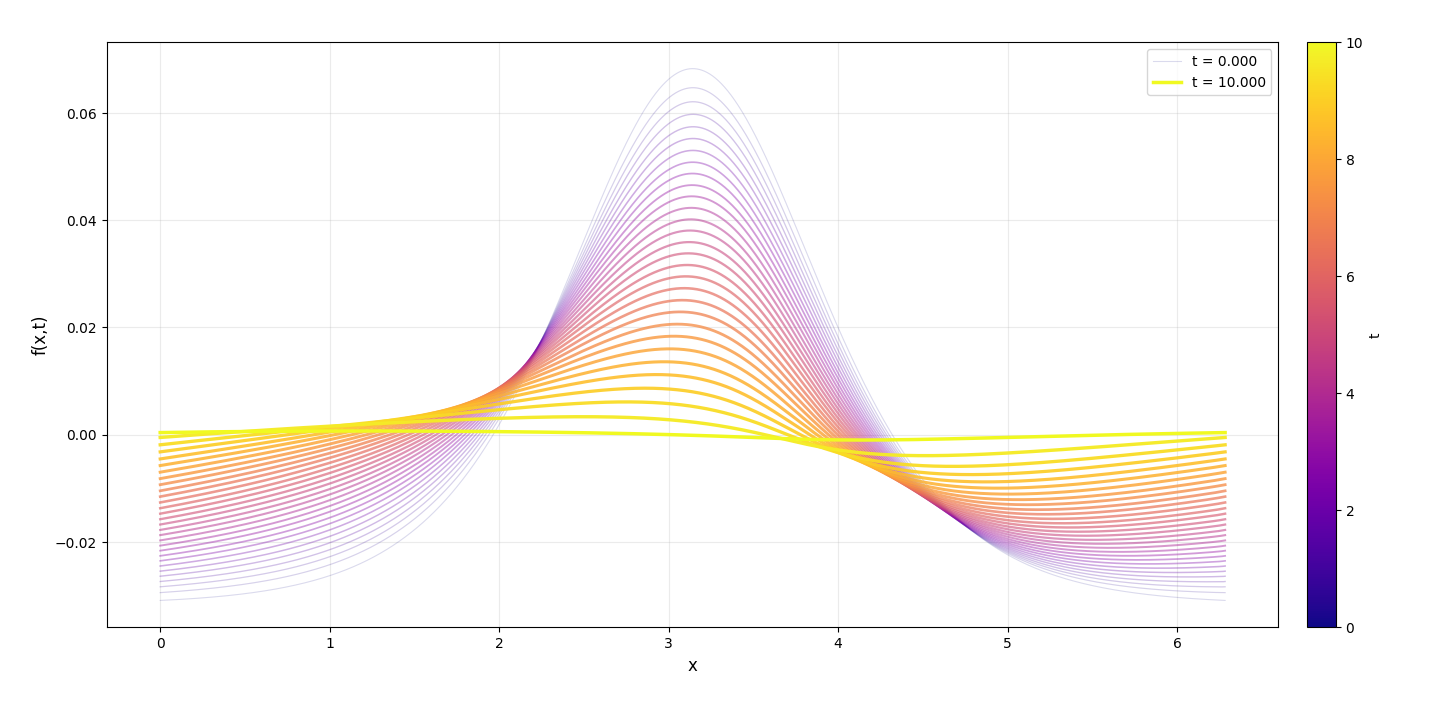}
    \caption{Numerical results for $\nu=0$ and $\beta=2$.}
    \label{fig:beta2}
\end{figure}

The corresponding diagnostics are shown in Figure~\ref{fig:beta2-2}.

\begin{figure}[H]
    \centering
    \includegraphics[width=0.45\linewidth]{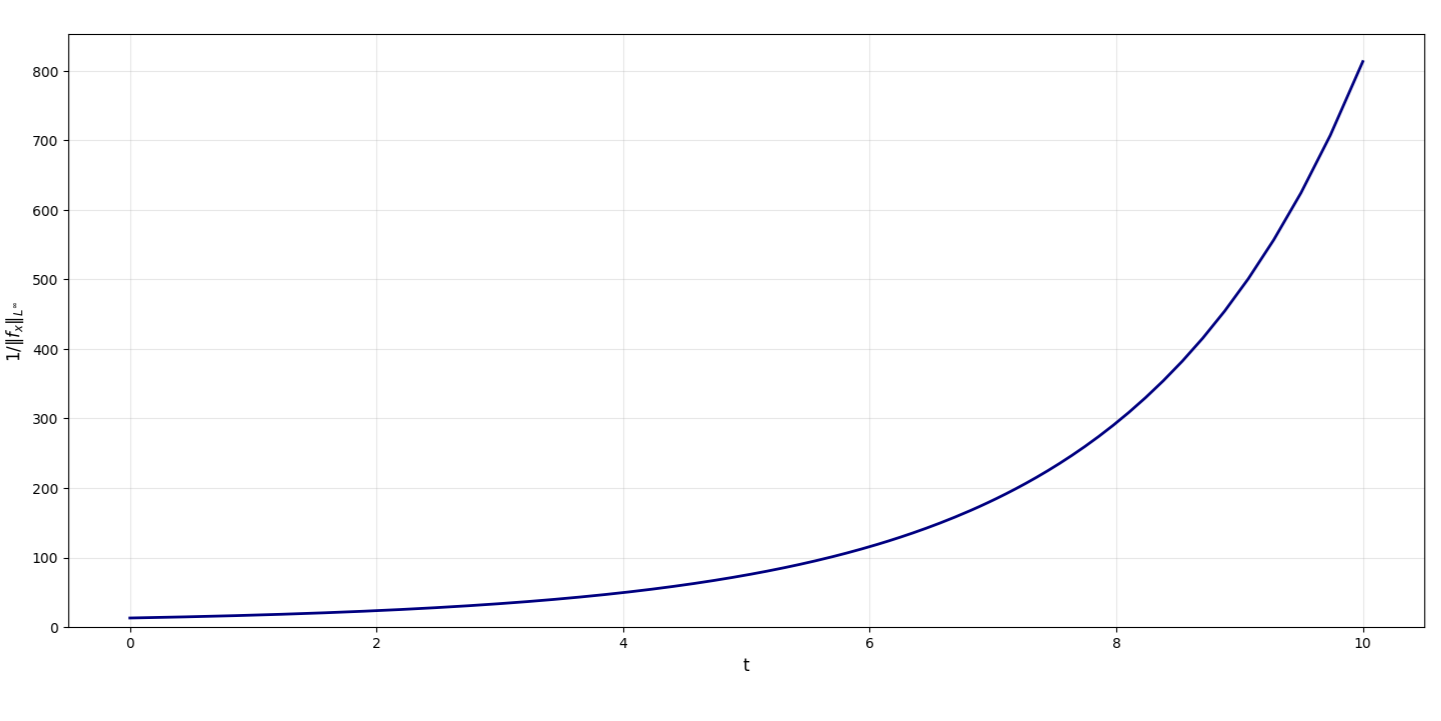}
    \includegraphics[width=0.45\linewidth]{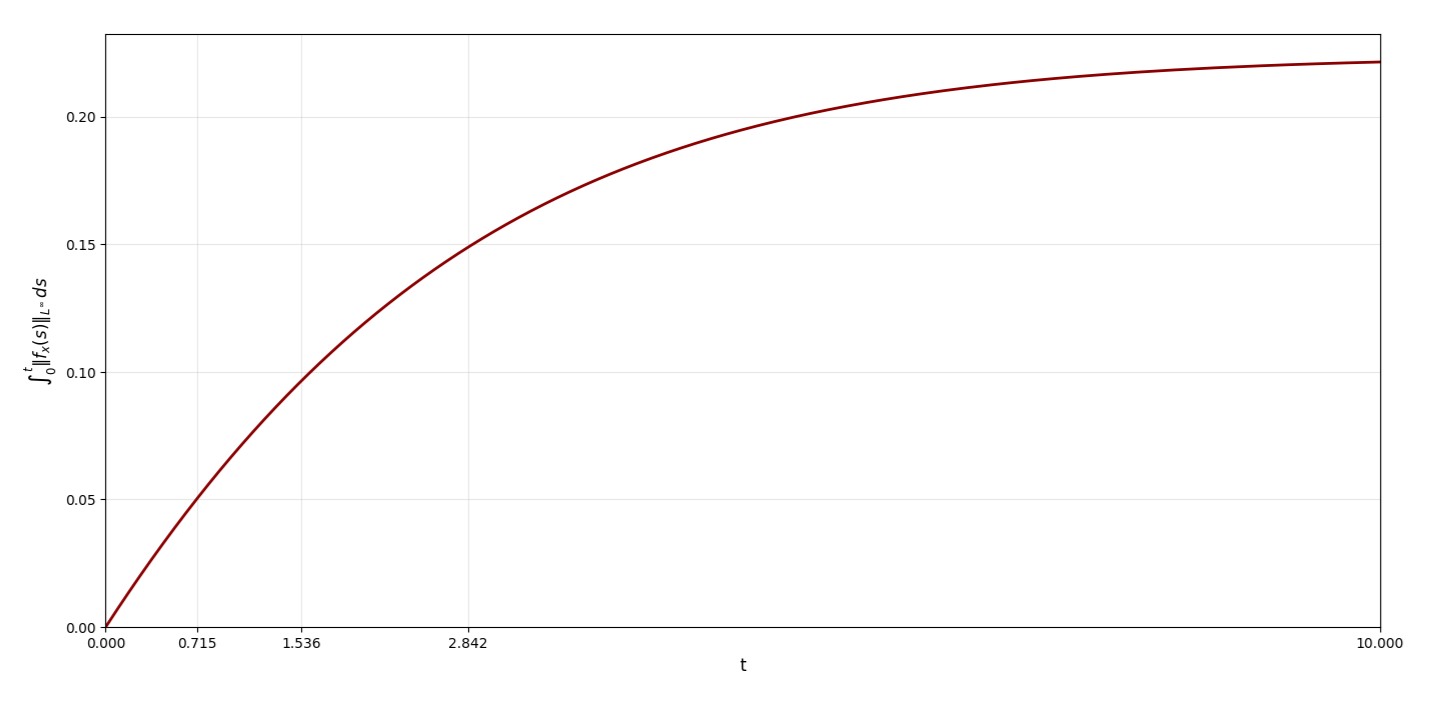}
    \caption{Diagnostics corresponding to Figure~\ref{fig:beta2}: left, $1/\|f_x\|_{L^\infty}$; right, $I(t)$.}
    \label{fig:beta2-2}
\end{figure}

Figure~\ref{fig:beta0} presents the corresponding results for $\beta=0$.

\begin{figure}[H]
    \centering
    \includegraphics[width=\linewidth]{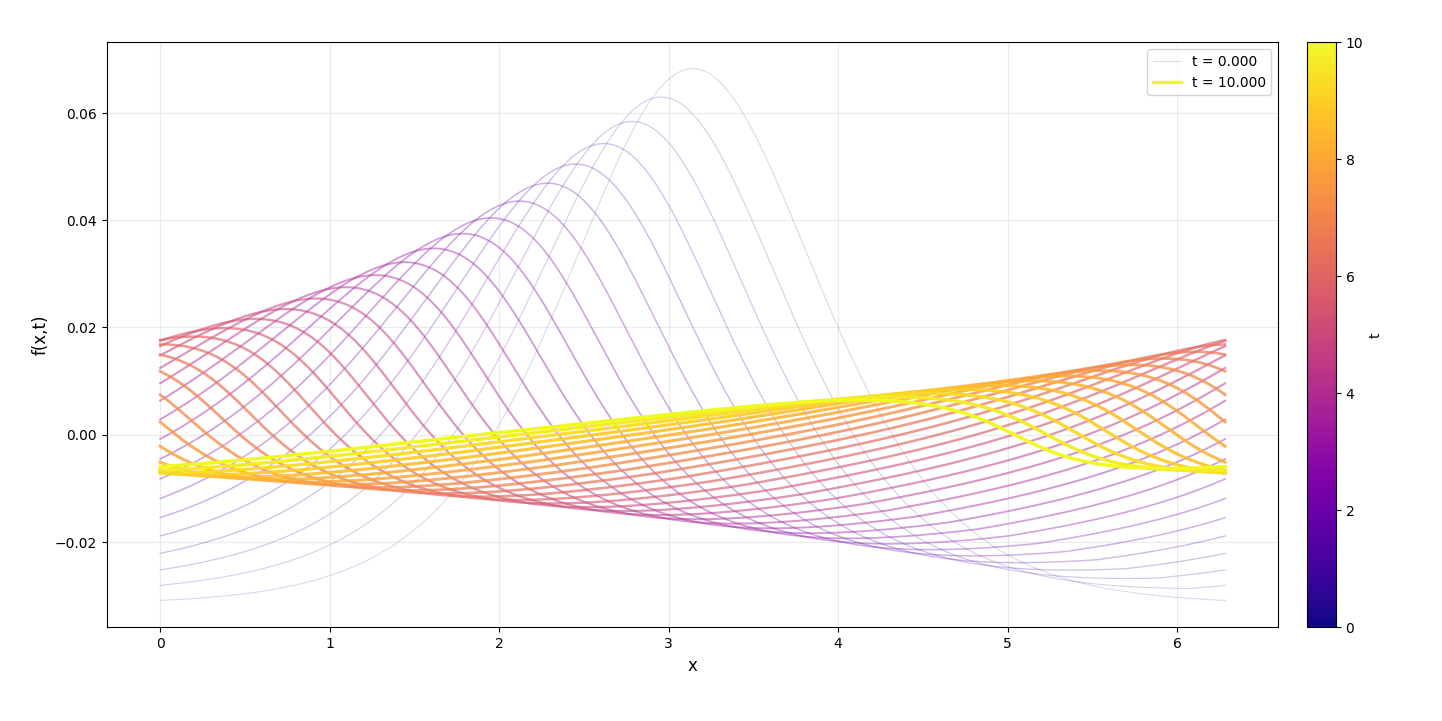}
    \caption{Numerical results for $\nu=0$ and $\beta=0$.}
    \label{fig:beta0}
\end{figure}

The associated diagnostics are displayed in Figure~\ref{fig:beta0-2}.

\begin{figure}[H]
    \centering
    \includegraphics[width=0.45\linewidth]{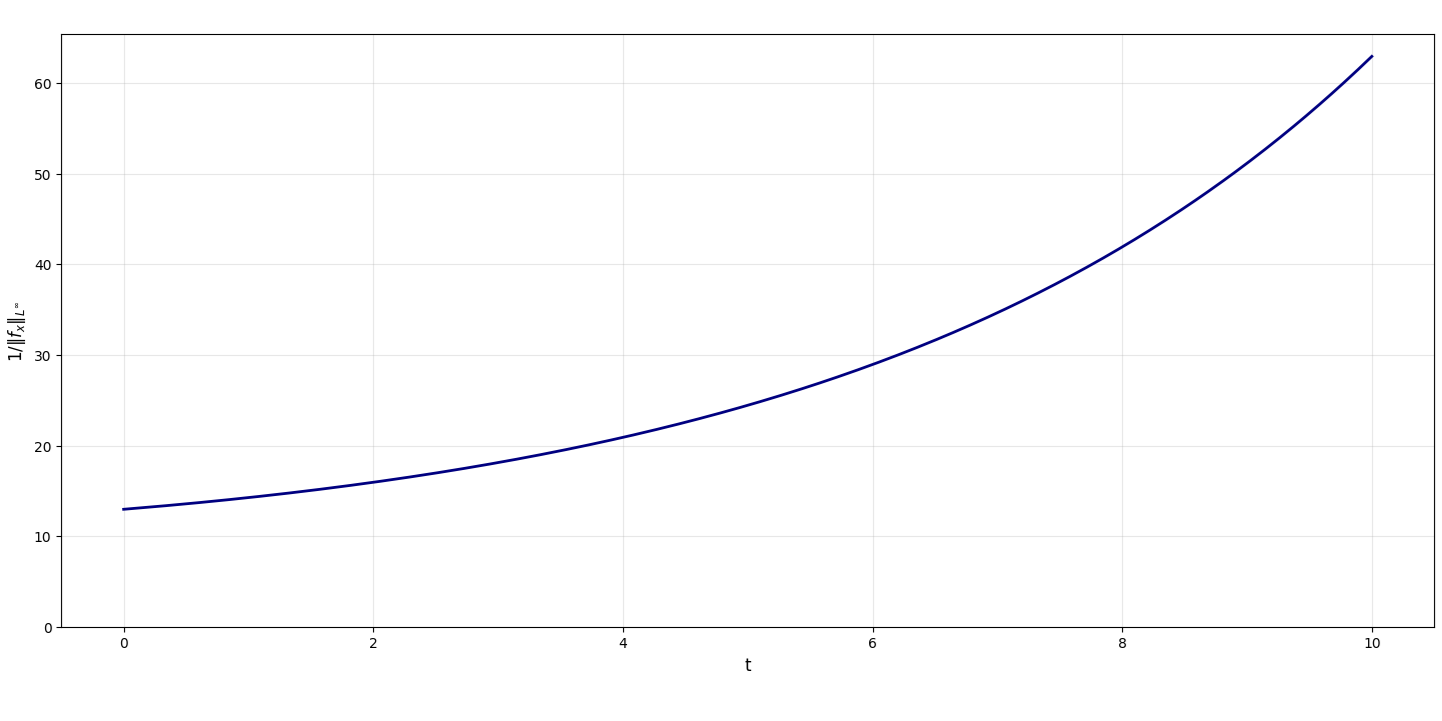}
    \includegraphics[width=0.45\linewidth]{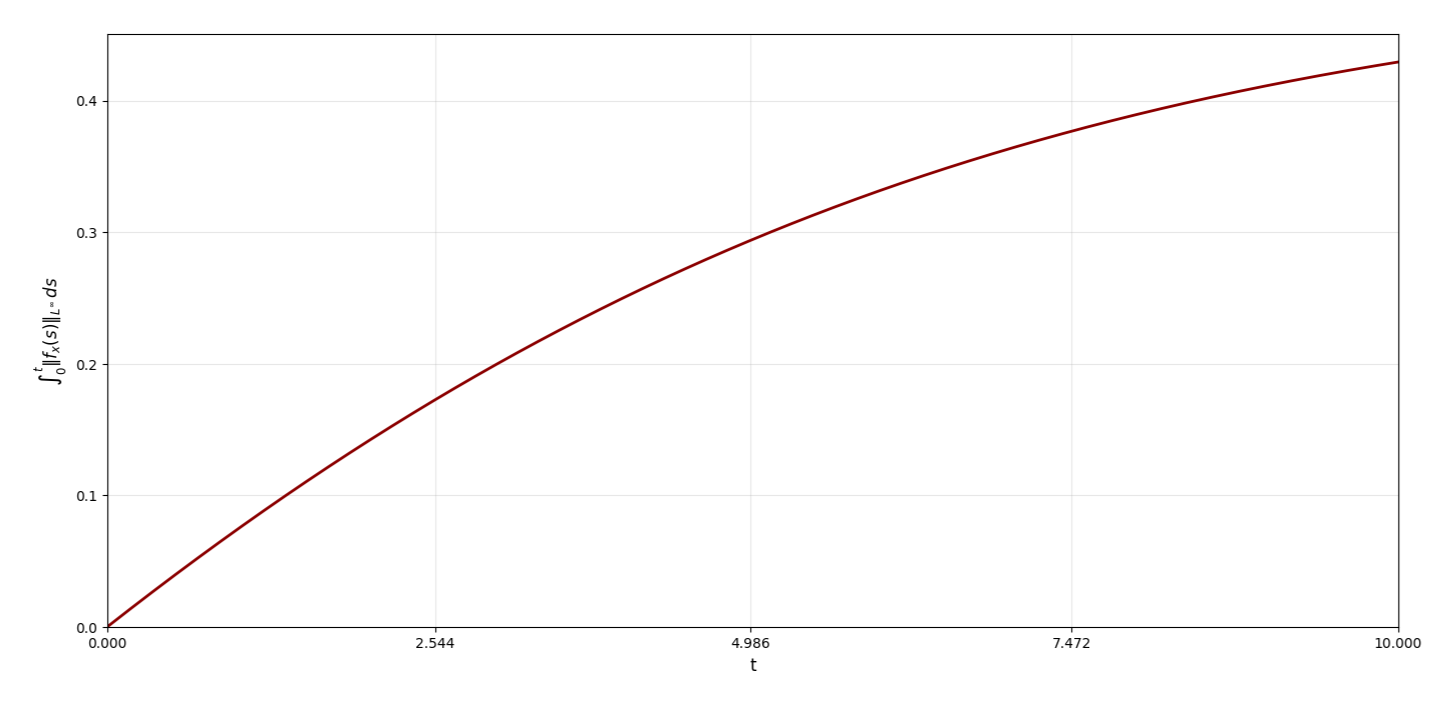}
    \caption{Diagnostics corresponding to Figure~\ref{fig:beta0}: left, $1/\|f_x\|_{L^\infty}$; right, $I(t)$.}
    \label{fig:beta0-2}
\end{figure}

Figure~\ref{fig:beta-1} presents the results for $\beta=-1$.

\begin{figure}[H]
    \centering
    \includegraphics[width=\linewidth]{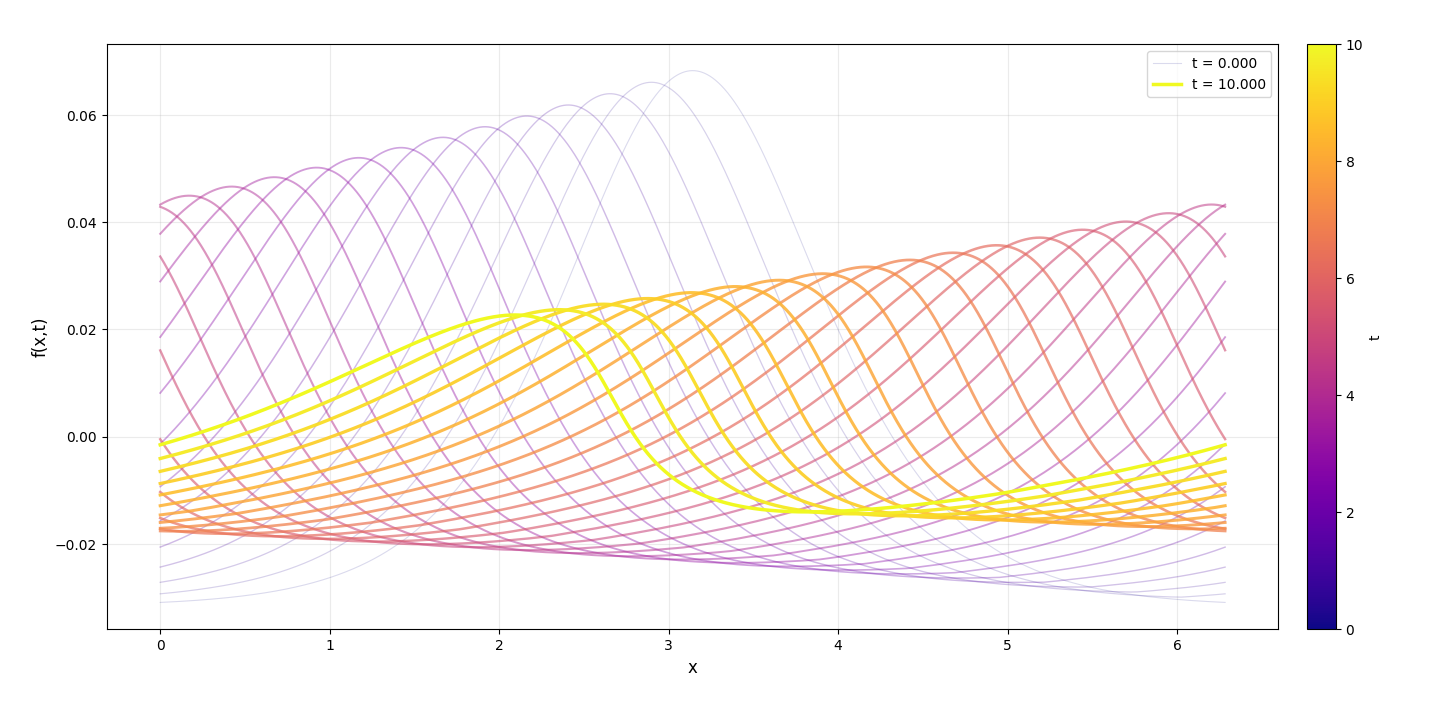}
    \caption{Numerical results for $\nu=0$ and $\beta=-1$.}
    \label{fig:beta-1}
\end{figure}

The associated diagnostics are shown in Figure~\ref{fig:beta-1-2}.

\begin{figure}[H]
    \centering
    \includegraphics[width=0.45\linewidth]{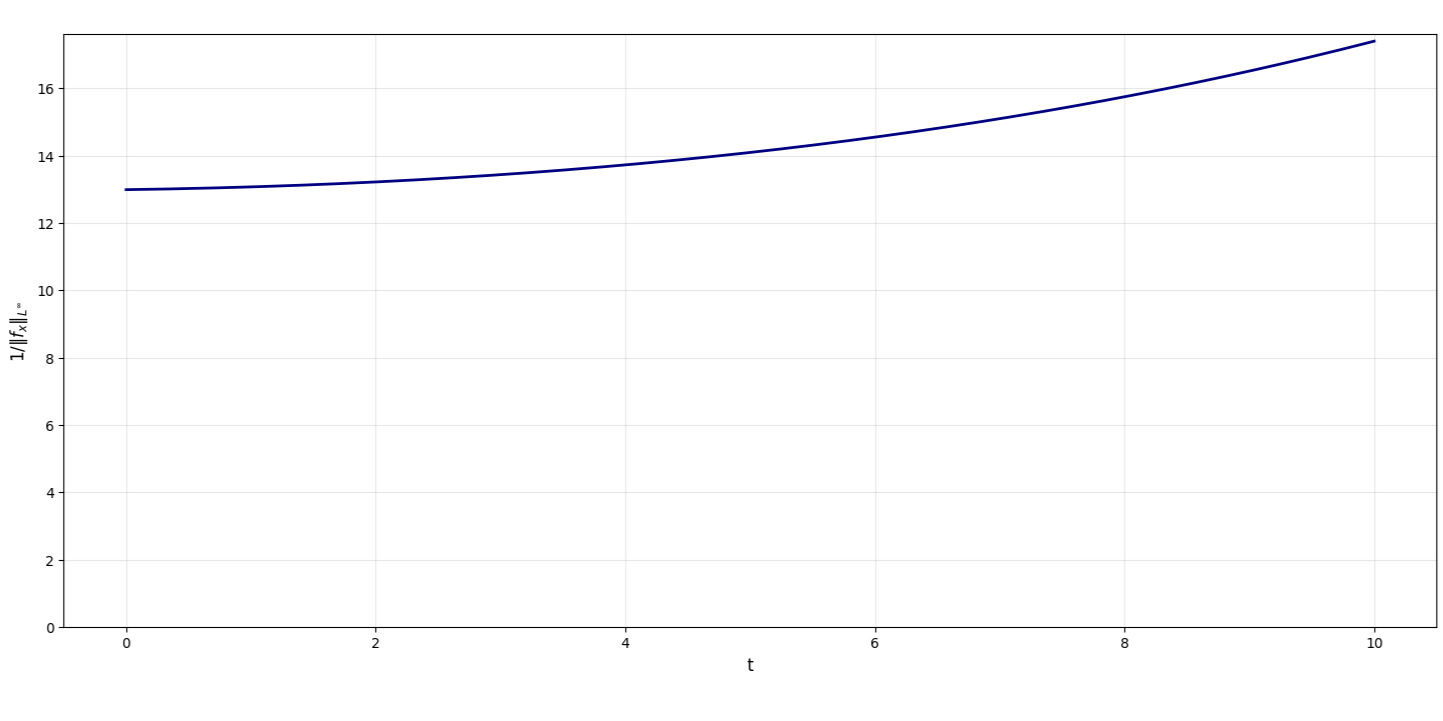}
    \includegraphics[width=0.45\linewidth]{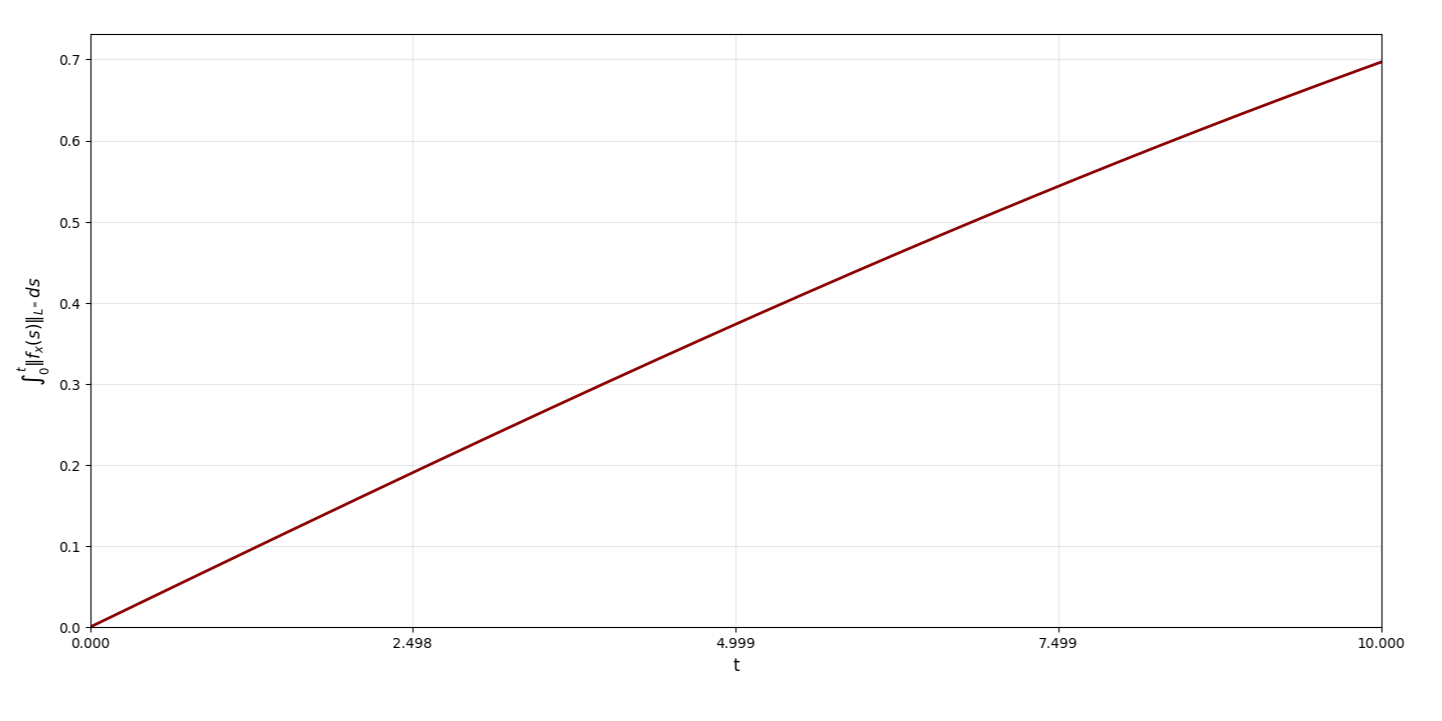}
    \caption{Diagnostics corresponding to Figure~\ref{fig:beta-1}: left, $1/\|f_x\|_{L^\infty}$; right, $I(t)$.}
    \label{fig:beta-1-2}
\end{figure}

For $\beta=-1$, the decay is less apparent over shorter time intervals.
However, when the computation is extended to $t=20$, the dissipative trend becomes more visible, as illustrated in Figures~\ref{fig:beta-1-bis} and \ref{fig:beta-1-2-bis}.

\begin{figure}[H]
    \centering
    \includegraphics[width=\linewidth]{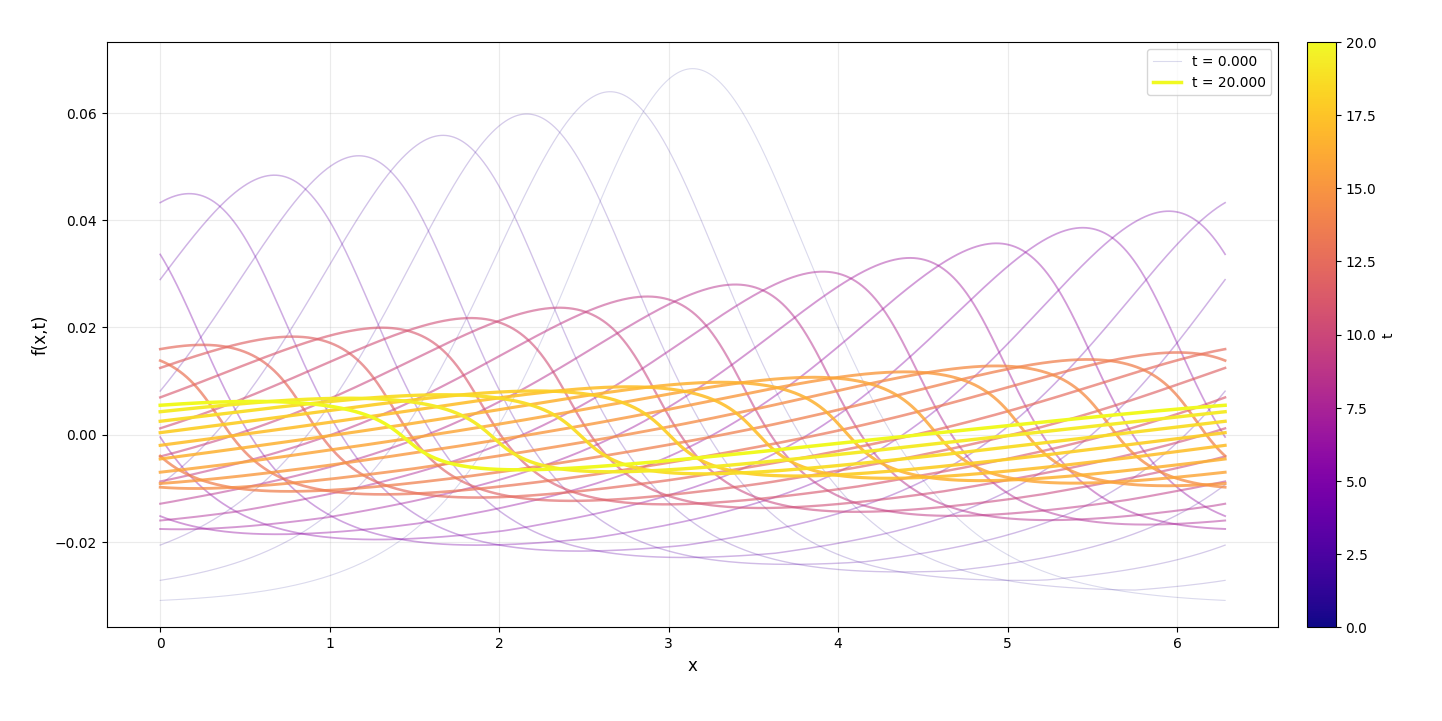}
    \caption{Numerical results for $\nu=0$, $\beta=-1$, extended up to $t=20$.}
    \label{fig:beta-1-bis}
\end{figure}

The corresponding diagnostics for the longer simulation are shown in Figure~\ref{fig:beta-1-2-bis}.

\begin{figure}[H]
    \centering
    \includegraphics[width=0.45\linewidth]{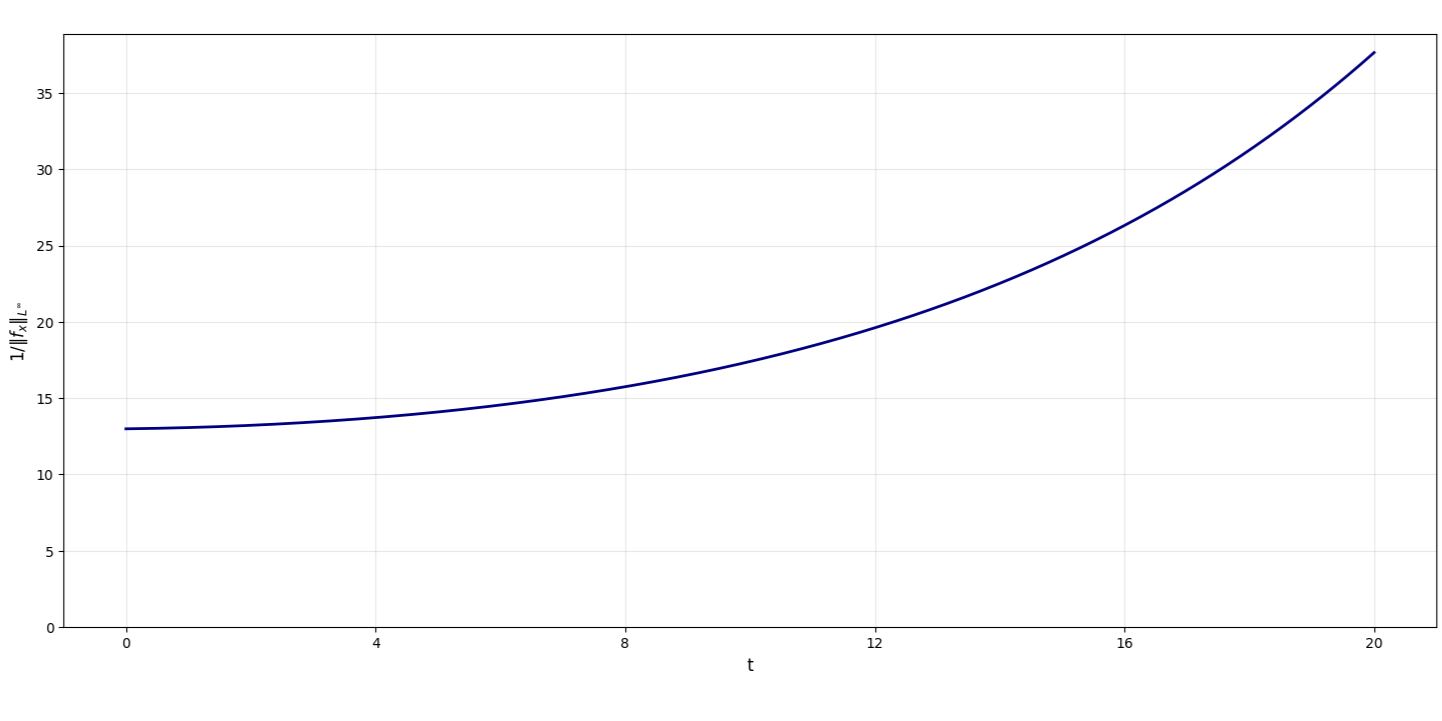}
    \includegraphics[width=0.45\linewidth]{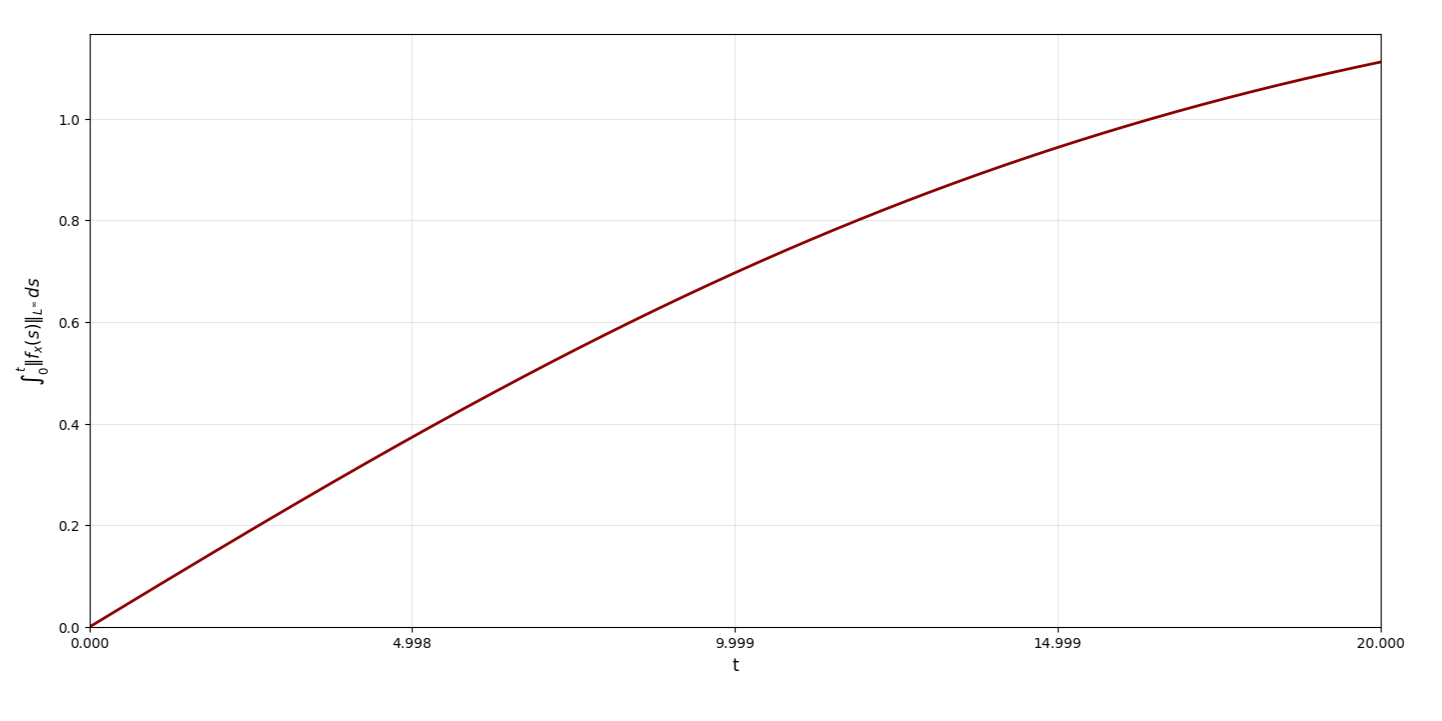}
    \caption{Diagnostics corresponding to Figure~\ref{fig:beta-1-bis}: left, $1/\|f_x\|_{L^\infty}$; right, $I(t)$.}
    \label{fig:beta-1-2-bis}
\end{figure}

\section*{Acknowledgement} 
D.A-O is supported by Grant RYC2023-045563-I funded by MICIU/AEI/10.13039/501100011033. D. A-O and R.G-B are supported by the project ”Mathematical Analysis of Fluids and Applications” Grant PID2019-109348GA-I00 and ”An\'alisis Matem\'atico Aplicado y Ecuaciones Diferenciales” Grant PID2022-141187NB-I00 funded by MCIN/AEI/10.13039/501100011033/FEDER, UE. This publication is part of the project PID2022-141187NB- I00 funded by MCIN/AEI/10.13039/501100011033.

 \end{document}